\def\qed{\hfill $\Box$}
\renewenvironment{proof}{\vspace{.01cm}   \noindent{\bf Proof }}{\qed \vspace{.1cm}}
\newtheoremstyle{mythmstyle}
{1em} 
{0.1em} 
{\itshape} 
{} 
{\bfseries} 
{} 
{1em} 
{} 
\theoremstyle{mythmstyle}
\newtheorem{theorem}{Theorem}[section]
\newtheorem{lemma}{Lemma}[section]
\newtheorem{proposition}{Proposition}[section]
\newtheorem{remark}{Remark}[section]
\makeatletter \@addtoreset{equation}{section} 
\begin{document}

\title{Analysis of Multi-Index Monte Carlo Estimators\\ for a Zakai SPDE}

\author{Christoph Reisinger\thanks{Mathematical Institute, University of Oxford, Andrew Wiles Building, Woodstock Rd., Oxford OX2, UK,
    {\tt \{reisinge, wangz3\}@maths.ox.ac.uk}.} \and
    Zhenru Wang\footnotemark[1] }
\maketitle

\begin{abstract}
In this article, we propose a space-time Multi-Index Monte Carlo (MIMC) estimator for a one-dimensional parabolic stochastic partial differential equation (SPDE) of Zakai type. We compare the complexity with the Multilevel Monte Carlo (MLMC) method of Giles and Reisinger (2012), and find, by means of Fourier analysis, that the MIMC method: (i) has suboptimal complexity of $O(\varepsilon^{-2}|\log\varepsilon|^3)$ for a root mean square error (RMSE) $\varepsilon$ if the same spatial discretisation as in the MLMC method is used; (ii) has a better complexity of $O(\varepsilon^{-2}|\log\varepsilon|)$ if a carefully adapted discretisation is used; (iii) has to be adapted for non-smooth functionals. Numerical tests confirm these findings empirically.
\end{abstract}

\section{Introduction} 

Stochastic partial differential equations (SPDEs) have become an area of active research over the last few decades. Several classes of methods have been developed to solve SPDEs numerically, including finite difference schemes \cite{gyongy1997implicit, gyongy1998lattice, gyongy1999lattice, davie2001convergence}, finite element schemes \cite{walsh2005finite, kruse2014optimal}, and stochastic Taylor schemes \cite{jentzen2009numerical, jentzen2010taylor}. 

This article is motivated by Zakai SPDEs of the form (see \cite{gobet2006discretization}),
\begin{equation}\label{eq_zakai}
\mathrm{d}v(t,x) = \bigg(\frac{1}{2}\frac{\partial^2}{\partial x^2}\big[a(x)v(t,x)\big] - \frac{\partial}{\partial x}\big[b(x)v(t,x)\big]\bigg)\,\mathrm{d}t - \frac{\partial}{\partial x}\big[\gamma(x)v(t,x)\big]\,\mathrm{d}M_t,
\end{equation}
where $M$ is a standard Brownian motion and $a$, $b$ and $\gamma$ suitably chosen coefficient functions.
This Zakai equation arises from a nonlinear filtering problem: given an observation process $M$ and a signal process $X$, we want to estimate the conditional distribution of $X$ given $M$. If $X$ satisfies
$$X_t = X_0 + \int_0^t\beta(X_s)\,\mathrm{d}s + \int_0^t\sigma(X_s)\,\mathrm{d}B_s + \int_0^t\gamma(X_s)\,\mathrm{d}M_s, $$
where $B$ and $M$ are independent standard Brownian motions, and the distribution function has a density $v$, it is proved in \cite{kurtz1999particle}
that $v$ satisfies \eqref{eq_zakai} with
$$a = \sigma^2 + \gamma^2,\qquad b = \beta.$$
The conditional (on $M$) distribution function is then
\[
L_t^x = \int_{-\infty}^x v(t,\xi)\,\mathrm{d}\xi = 1-\int^{\infty}_x v(t,\xi) \,\mathrm{d}\xi,
\]
and it is the goal of this article to estimate the expectation of functionals of this form.

For simplicity, we restrict ourselves to the special case
\begin{equation}\label{eq_equation1}
\mathrm{d}v = -\mu\frac{\partial v}{\partial x}\,\mathrm{d}t + \frac{1}{2}\frac{\partial^2 v}{\partial x^2}\,\mathrm{d}t - \sqrt{\rho}\frac{\partial v}{\partial x}\,\mathrm{d}M_t,\qquad (t,x)\in(0,T)\times\mathbb{R},
\end{equation}
where $T>0$, $M$ is a standard Brownian motion, and $\mu$ and $0\le \rho<1$ are real-valued parameters. 
This is a special case of (\ref{eq_zakai}) where $\sigma = \sqrt{1-\rho},\ \gamma = \sqrt{\rho},\ \beta=\mu$.

Moreover, $v$ in (\ref{eq_equation1}) describes the limit empirical measure, as $N\rightarrow\infty$, of a large exchangeable particle system \cite{kurtz1999particle},
\begin{equation}
\mathrm{d}X_t^i = \mu\,\mathrm{d}t + \sqrt{1-\rho}\,\mathrm{d}W_t^i + \sqrt{\rho}\,\mathrm{d}M_t,\qquad\text{for }1\leq i\leq N,
\end{equation}
where $W_t^i$ and $M_t$ are independent standard Brownian motions.

A direct application of this model is the large portfolio credit model \cite{bush2011stochastic}. Assume the market consists of $N$ different firms where $X_t^{i}$ are ``distance-to-default'' processes.
Then the functional of interest is the loss
\begin{equation}
\label{eq_1dlossfunctional}
L_t =  1-\int_0^\infty v(t,x)\,\mathrm{d}x,
\end{equation}
i.e., the mass lost at the absorbing boundary. 
In the credit risk application of \cite{bujok2012numerical}, $L$ describes the loss in a structural credit model, i.e., the fraction of firms whose values have crossed zero and which are considered defaulted. The values of credit products are often functions of the loss $L_t$.

Generally, the solution to (\ref{eq_zakai}) is not known analytically and has to be approximated numerically. A survey of methods is given in \cite{gobet2006discretization}, and we focus here on recent applications of multilevel methods as they pertain to this article. 
Giles and Reisinger \cite{ref2} used an explicit Milstein finite difference approximation to the solution of (\ref{eq_equation1}).
By using Fourier analysis, this scheme can be shown to give first order of convergence in the timestep and second order in the spatial mesh size. One constraint in this paper is that the timestep needs to be small enough to ensure stability. Inspired by the numerical analysis of SDEs in \cite{szpruch2010numerical,buckwar2011comparative}, \cite{ref1} extended the discretisation to an implicit method on the basis of the $\sigma$-$\theta$ time-stepping scheme, where the drift and the deterministic part of the double stochastic integral are taken implicit. Fourier analysis shows that the convergence order is the same as in the explicit Milstein scheme, however, this scheme is unconditionally mean-square stable under a constraint on the correlation $\rho$ in~\eqref{eq_equation1}. 
This unconditional stability is essential for our application as detailed below.

In this paper, we compare a new Multi-index Monte Carlo (MIMC) scheme in the spirit of \cite{ref5}, with the Multilevel Monte Carlo (MLMC) method of \cite{giles2008multilevel}.
The MLMC method utilises a sequence of approximations $P_0,P_1,\ldots,P_{l^\ast}$ to a random variable $P$ with increasing accuracy but also higher cost for increasing $l$. In the simulation of an SDE, $l$ would typically be the refinement level of the time mesh, with $2^l$ time steps.
The MLMC estimator is based on recursive control variates
embedded in the identity
$$\mathbb{E}\left[P_{l^\ast}\right] = \mathbb{E}\big[P_0\big] + \sum_{l=1}^{l^\ast}\mathbb{E}\big[P_l-P_{l-1}\big],$$
where $l^\ast$ is a maximum refinement level. The goal is to estimate $\mathbb{E}[P_{l^\ast}]$ by independent estimation of the summands, in a way that the root mean square error (RMSE) is comparable to the bias, but with a much reduced computational complexity. If fewer samples are needed for higher levels, the total computational cost is much lower than when using the standard Monte Carlo method (see Theorem 1 in \cite{giles2015multilevel}). In a second step, $l^\ast$ is adapted for a given RMSE target.

The MLMC method has been extended to SPDEs, in which case it gives even better savings due to the additional spatial dimensions. Giles and Reisinger \cite{ref2} applied MLMC to simulate the SPDE~\eqref{eq_equation1} and the functional of its solution given in~\eqref{eq_1dlossfunctional}. Both timestep $k$ and mesh size $h$ decrease geometrically on different levels of refinement $l=0,1,\ldots,l^\ast$, with fixed $k/h^2$. Thus, the variance of the estimators for $\mathbb{E}\big[P_l-P_{l-1}\big]$ is small for large $l$, compared to the estimator for $\mathbb{E}\big[P_{l^\ast}\big]$. As a result, for a fixed accuracy $\varepsilon$, the cost can be reduced significantly to $O(\varepsilon^{-2})$ instead of $O(\varepsilon^{-7/2})$ by the standard Monte Carlo estimator.

The approach taken here is to approximate this SPDE \eqref{eq_equation1} by multi-index Monte Carlo simulation as introduced in \cite{ref5}. 
The goal of MIMC is to improve the total complexity for higher-dimensional problems, where the cost of each sample on higher levels increases faster than the variance of MLMC estimators decays, such that the optimal complexity of $O(\varepsilon^{-2})$ is lost in MLMC. Multi-index Monte Carlo can be viewed as a combination of sparse grids \cite{zenger1997sparse} and MLMC methods. The level $\bm{l}$ is now a vector of integer indices, corresponding to the refinement level in the different dimensions.
Sparse grids, and hence MIMC, use high order mixed differences instead of only taking first order differences to construct a hierarchical approximation. More specifically, for a $d$-dimensional problem, \cite{ref5} first defines first order difference operators $\Delta_i,\ 1\leq i\leq d$, such that
$$\Delta_i P_{\bm{l}} = P_{\bm{l}} - P_{\bm{l}-\bm{e}_i}\mathbbm{1}_{l_i>0},$$
where $\bm{e}_i$ is the unit vector in direction $i$ and $P_{\bm{l}}$ is the  estimator of level ${\bm{l}}$. Then the telescoping sum becomes
$$\mathbb{E}[P] = \sum_{\bm{l} \in \mathbb{N}^d}\mathbb{E}\bigg[\bigg(\prod_{i=1}^d\Delta_i\bigg)P_{\bm{l}}\bigg].$$
The strategy is to neglect terms with large ${\bm{l}}$ where the cost is large and the contribution to the estimator is small. Instead of choosing an index $l^\ast$ as in MLMC, one now has to choose an  index set $\mathcal{I} \subset \mathbb{N}^d$ of terms to include.

The main selling point of MIMC methods applied to higher-dimensional SPDEs, similar to sparse grid methods for high-dimensional PDEs, is that they can give a computational complexity with a fixed polynomial order multiplied by a log term, with an exponent which grows with the dimension $d$. 
Both the sparse grid combination technique and the MIMC framework are based on certain specific expansions of the discretisation error in the mesh parameters. These have been proven for a growing class of PDEs \cite{pflaum1999error, reisinger2012analysis, griebel2014convergence}; \cite{ref5} studies PDEs with random coefficients and refers to these PDE results in a path-wise sense, \cite{giles2015multilevel} analyses MIMC for nested simulation.

In this paper, we
\begin{itemize}
\item
apply MIMC to the SPDE \eqref{eq_equation1} on a space-time mesh with an implicit Milstein time-stepping scheme and central spatial differences;
\item
demonstrate that the expectation and variance of the multi-index estimators can be analysed by Fourier analysis, even if the required error expansions do not hold pathwise;
\item
show that an extra term appears in the error expansion which explodes for small spatial mesh sizes, but by a suitable adaptation of the MIMC estimator a theoretical complexity of $O(\varepsilon^{−2}|\log \varepsilon|)$ can be obtained, while the practically observed complexity is still $O(\varepsilon^{−2})$ as in the MLMC method \cite{ref2};
\item
give a seemingly innocuous variant of this approximation scheme, i.e., with the spatial approximation studied in \cite{ref2}, and show that the ``standard'' assumptions on the error expansion are only satisfied with lower order, such that the MIMC estimator is less efficient with complexity $O(\varepsilon^{-2}|\log\varepsilon|^3)$;
\item
give an example of a discontinuous functional and a simple approximation where we show that multiple leading order error terms appear, such that the optimal index set is not triangular, and demonstrate how MIMC can be adapted to this case.
\end{itemize}

Hence we show that although
MIMC can be expected to work well in multi-dimensional situations compared to MLMC, this is not always true. In this paper, we only consider cases where MLMC gives optimal complexity order. Therefore, the results herein can be considered negative in the practical sense that worse performance, is obtained for MIMC than MLMC. In addition to these ``negative'' results being interesting in their own right, we provide a methodology to analyse MIMC estimators for a class of SPDEs. This method carries over to the higher-dimensional setting, where MIMC has complexity advantages over MLMC (as explained further in Section \ref{sec_Conclusion}).

The rest of this article is structured as follows. We define two implicit Milstein finite difference schemes in Section~\ref{sec_1dImplicitMilsteinScheme}, and the MIMC estimators in Section~\ref{sec_1dMIMCEstimator}. The main theoretical result is provided in Section \ref{sec_1dMIMCFourier}, where we analyse the convergence order of a MIMC estimator using Fourier analysis. Section \ref{sec_1dOptimalIndex} gives an ``optimal index set'' for this MIMC estimator and derives the total complexity for fixed accuracy. Section \ref{sec_1dOtherResults} derives further results for alternative approximations, while Section \ref{sec_1dNumerical} shows numerical experiments confirming the above findings. Section \ref{sec_Conclusion} offers conclusions and directions for further research.

\section{Two implicit Milstein finite difference schemes}\label{sec_1dImplicitMilsteinScheme}
Let $(\Omega,\mathcal{F},\mathbb{P})$ be a probability space, on which there is given a one-dimensional standard Brownian motion $M$. We study the parabolic stochastic partial differential equation
\begin{equation}\label{eq_1dSPDE}
\mathrm{d}v = -\mu\frac{\partial v}{\partial x}\,\mathrm{d}t + \frac{1}{2}\frac{\partial^2 v}{\partial x^2}\,\mathrm{d}t - \sqrt{\rho}\frac{\partial v}{\partial x}\,\mathrm{d}M_t,\qquad (t,x)\in(0,T)\times\mathbb{R},
\end{equation}
where $T>0$, $M$ is a standard Brownian motion, and $\mu$ and $0\le \rho<1$ are real-valued parameters,
subject to the Dirac initial data
\begin{equation}\label{eq_1dDiracInitial}
v(0,x) = \delta(x-x_0),
\end{equation}
where $x_0\in\mathbb{R}$ is given. A classical result states that, for a class of SPDEs including \eqref{eq_1dSPDE}, with initial condition in $L_2(\mathbb{R}) = \{f:\int_{\mathbb{R}}f^2\,\mathrm{d}x<\infty\}$, there exists a unique solution $v\in L_2(\Omega\times(0,T), \mathcal{F}, L_2(\mathbb{R}))= \{f(\omega,t,\cdot):\, f(\omega,t,\cdot)\in L_2(\mathbb{R}),\ f(\omega,t,\cdot) \text{ is } \mathcal{F}_t\text{-measurable, }\mathbb{E}\int_0^T \lVert f(\omega,t)\rVert^2_{L_2}\,\mathrm{d}t<\infty\}$ \cite{krylov1981stochastic}. This does not include Dirac initial data \eqref{eq_1dDiracInitial}, but in fact, the solution to \eqref{eq_1dSPDE} and \eqref{eq_1dDiracInitial} at time $T$ is analytically known (see \cite{ref2}) to be the smooth (in $x$) function
\begin{equation}\label{eq_1dTheoreticalResult}
v(T,x) = \frac{1}{\sqrt{2\pi(1-\rho)T}}\exp\left(-\frac{\big(x-x_0-\mu T-\sqrt{\rho}M_T\big)^2}{2(1-\rho)T}\right).
\end{equation}

Integrating \eqref{eq_1dSPDE} over a time interval $[t,t+k]$, we have
$$v(t+k,x) = v(t,x)+\int_t^{t+k}-\mu\frac{\partial v}{\partial x} + \frac{1}{2}\frac{\partial^2 v}{\partial x^2}\;\mathrm{d}s-\int_t^{t+k}\sqrt{\rho}\frac{\partial v}{\partial x}\;\mathrm{d}M_s.$$

We then use a spatial grid with uniform spacing $h$ and a timestep $k$, and let $V_j^{n}$ be the approximation to $v\big(nk,jh\big)$, $n=1,\ldots,N$, $j\in\mathbb{Z}$, where $N=T/k$. We approximate $v(0,x)$ by
\begin{equation}\label{eq_1dDiracInitialApprox}
V_j^0 = 
\begin{cases}
h^{-1},\quad &j=[x_0/h],\\
0,&\text{otherwise}.
\end{cases}
\end{equation}
Here $[x]$ denotes the closest integer to $x$. 
To improve the accuracy of the approximation of $v$ in the present case of Dirac initial data, we subsequently choose $h$ such that $x_0$ is on the grid, then $x_0/h$ is an integer.

By an explicit Milstein scheme \cite{ref2} with standard central difference approximation to the spatial derivatives, we have
$$V^{n+1} = V^n - \frac{\mu k+\sqrt{\rho k}Z_n}{2h}D_1V^n + \frac{(1-\rho)k + \rho kZ_n^2}{2h^2}D_2V^n,$$
where 
$Z_n$ are independent standard normal random variables, $D_1$ and $D_2$ are first and second central difference operators,
$$(D_1V)_j = V_{j+1}-V_{j-1},\quad (D_2V)_j = V_{j+1}-2V_j+V_{j-1}.$$
However, there is a condition for the mean-square stability of this explicit scheme (see \cite{ref2}),
$\left(1+2\rho^2\right)\frac{k}{h^2}\leq~1.$
This is an obstacle for the use of the MIMC scheme where timestep and mesh size are varied independently. We can avoid the constraint on the timestep by using an implicit scheme instead \cite{ref1}.
Two implicit Milstein finite difference discretisations are conceivable for the SPDE \eqref{eq_1dSPDE}:
\begin{enumerate}
\item Discretise the spatial derivatives first, then apply the Milstein scheme to the resulting system of SDEs, i.e.,
\begin{equation}\label{eq_1dimplicitdiscrete}
V^{n+1} = V^n - \frac{\mu k}{2h}D_1V^{n+1} + \frac{k}{2h^2}D_2V^{n+1} - \frac{\sqrt{\rho k}Z_n}{2h}D_1V^n + \frac{\rho k(Z_n^2 - 1)}{8h^2}D_1^2V^n,
\end{equation}
where $(D_1^2V)_j = V_{j+2}-2V_j+V_{j-2}$. 
\item Apply the Milstein scheme first, then discretise the spatial derivatives. This gives
\begin{equation}\label{eq_1dimplicitdiscrete_2}
V^{n+1} = V^n - \frac{\mu k}{2h}D_1V^{n+1} + \frac{k}{2h^2}D_2V^{n+1} - \frac{\sqrt{\rho k}Z_n}{2h}D_1V^n + \frac{\rho k(Z_n^2 - 1)}{2h^2}D_2V^n.
\end{equation}
\end{enumerate}
These two schemes have the same effect on the multilevel algorithm, and \eqref{eq_1dimplicitdiscrete_2} is used in \cite{ref2}. However, we will show that \eqref{eq_1dimplicitdiscrete_2} is less efficient using the multi-index algorithm, as these two schemes result in different orders of variance for the MIMC estimators. Since the scheme \eqref{eq_1dimplicitdiscrete} is more efficient, we will explore it in detail in Section \ref{sec_1dMIMCFourier}, and the analysis of \eqref{eq_1dimplicitdiscrete_2} in Section~\ref{sec_1dalternative} is then similar. 

Let $\ell_2(\mathbb{Z}) = \{(\ldots,V_{-1},V_0,V_1,\ldots): (\sum_{n=-\infty}^\infty |V_n|^2)^{\frac{1}{2}}<\infty\}$ and $L_2(\Omega\times(0,T),\mathcal{G},\ell_2(\mathbb{Z})) = \{V(\omega,t,\cdot): V(\omega,t,\cdot)\in \ell_2(\mathbb{Z}),V(\omega,t,\cdot) \text{ is }\mathcal{F}_t\text{-measurable, }\mathbb{E}\int_0^T \lVert V(\omega,t)\rVert^2_{\ell_2}\,\mathrm{d}t<\infty\}$.

The implicit schemes are unconditionally stable and converge in mean-square sense ($\ell_2$ sense) under a constraint on the correlation \cite{ref1}, as the following theorems describe for \eqref{eq_1dimplicitdiscrete_2}, and an analogous result holds for  \eqref{eq_1dimplicitdiscrete}.
\begin{theorem}[Theorem 2.1 in \cite{ref1}]\label{thm_1dImplicitStability}
The implicit Milstein scheme \eqref{eq_1dimplicitdiscrete_2} is unconditionally stable in the mean-square sense, provided $0<\rho\leq 1/\sqrt{2}$.
\end{theorem}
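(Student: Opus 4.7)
The plan is to carry out a von Neumann (discrete Fourier) analysis of the recursion \eqref{eq_1dimplicitdiscrete_2}. Define the semi-discrete Fourier transform $\hat V^n(\xi) := \sum_{j\in\mathbb{Z}} V_j^n e^{-ij\xi h}$ for $\xi\in[-\pi/h,\pi/h]$. The central difference operators have symbols $\widehat{D_1}=2i\sin(\xi h)$ and $\widehat{D_2}=-2(1-\cos(\xi h))$. Substituting into \eqref{eq_1dimplicitdiscrete_2} yields a scalar recursion $\hat V^{n+1}(\xi)=G_n(\xi)\,\hat V^n(\xi)$ with amplification factor
\begin{equation*}
G_n(\xi) \;=\; \frac{B_n(\xi)}{A(\xi)},\qquad
A(\xi) = 1 + \frac{i\mu k\sin(\xi h)}{h} + \frac{k\,(1-\cos(\xi h))}{h^2},
\end{equation*}
\begin{equation*}
B_n(\xi) = 1 - \frac{i\sqrt{\rho k}\,Z_n\sin(\xi h)}{h} - \frac{\rho k(Z_n^2-1)(1-\cos(\xi h))}{h^2}.
\end{equation*}
By Parseval's identity, $\|V^n\|_{\ell_2(\mathbb{Z})}^2 = \tfrac{h}{2\pi}\|\hat V^n\|_{L_2([-\pi/h,\pi/h])}^2$, and because $Z_n$ is independent of $\hat V^n$, mean-square stability reduces to verifying the pointwise bound $\mathbb{E}|G_n(\xi)|^2\le 1$ uniformly in $\xi$.

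I would then compute the two moments explicitly. Introduce $\bar\omega := 1-\cos(\xi h)\in[0,2]$, so that $\sin^2(\xi h)=\bar\omega(2-\bar\omega)$, and $\lambda := k/h^2$. Using $\mathbb{E}[Z_n^2-1]=0$ and $\mathbb{E}[(Z_n^2-1)^2]=2$,
\begin{equation*}
|A(\xi)|^2 = (1+\lambda\bar\omega)^2 + \mu^2 k\lambda\,\bar\omega(2-\bar\omega),\qquad
\mathbb{E}|B_n(\xi)|^2 = 1 + 2\rho^2\lambda^2\bar\omega^2 + \rho\lambda\,\bar\omega(2-\bar\omega).
\end{equation*}
After cancelling the common $1$ and (for $\bar\omega>0$) dividing through by $\lambda\bar\omega$, the desired inequality $\mathbb{E}|B_n|^2\le|A|^2$ becomes
\begin{equation*}
(1-2\rho^2)\,\lambda\bar\omega \;+\; (\mu^2 k-\rho)(2-\bar\omega) \;+\; 2 \;\ge\; 0.
\end{equation*}

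The main obstacle, and the whole reason the correlation threshold appears, is controlling the first term uniformly in the mesh ratio $\lambda=k/h^2$. As $\lambda\to\infty$ and $\bar\omega\to 2$ (high-frequency modes), this term dominates, and its sign is governed by $1-2\rho^2$; this is precisely where the hypothesis $\rho\le 1/\sqrt{2}$ is used, and one can see it is sharp by inspecting the same limit. Once this term is non-negative, the remaining pieces are handled by a crude pointwise estimate: the second term is $\ge 2(\mu^2 k-\rho)$ in the worst case $\bar\omega=0$, so the sum is bounded below by $2\mu^2 k + 2(1-\rho)\ge 2(1-\rho)>0$ under $\rho\le 1/\sqrt{2}$. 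No constraint on $\lambda$ arises, which is exactly the unconditional stability claim. The case $\bar\omega=0$ gives $\mathbb{E}|G_n(0)|^2=1$, corresponding to conservation of the zero mode and consistent with $\ell_2$-stability.
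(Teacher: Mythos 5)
Your proof is correct and follows essentially the same route as the paper's source for this result: the paper cites \cite{ref1} and reproduces the identical Fourier/von Neumann argument in Section~\ref{sec_1d_fouriertransform} (and the closely related second-moment computation in Appendix~\ref{appendix_proof_prop_meansquareconvergence}), namely reducing mean-square stability to the pointwise bound $\mathbb{E}|G_n(\xi)|^2\le 1$ and observing that the coefficient $1-2\rho^2$ of the $\lambda\bar\omega$ term is exactly what makes the bound uniform in $\lambda=k/h^2$, whence the threshold $\rho\le 1/\sqrt{2}$. Your explicit handling of the drift term $\mu$ is a small addition (the paper sets $\mu=0$ by appeal to Remark~2.3 of \cite{ref1}), and your computations of $|A|^2$ and $\mathbb{E}|B_n|^2$ check out against the amplification factors \eqref{eq_1dCn1} and \eqref{eq_1d_thetacalculation}.
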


\begin{theorem}[Theorem 2.2 in \cite{ref1}]\label{thm_1dImplicitConvergence}
Assume $0<\rho\leq 1/\sqrt{2},\ T>0,\ k=T/N$ and $\frac{k}{h^2}=\lambda>0$ is kept fixed. The implicit Milstein scheme \eqref{eq_1dimplicitdiscrete_2} has the error expansion, for Dirac initial data \eqref{eq_1dDiracInitial},
\[
V_j^N - v(T,x_j) = k\,E_1(T,x_j) + h^2\,E_2(T,x_j) + o(k,h^2)\,R(T,x_j),
\]
where $x_j = jh$, $E_1,\,E_2$ and $R$ are random variables with bounded moments.
\end{theorem}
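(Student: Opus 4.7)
My plan is to use Fourier analysis on the spatial variable, exploiting the translation invariance of the scheme \eqref{eq_1dimplicitdiscrete_2} and the closed-form solution \eqref{eq_1dTheoreticalResult} of the continuous problem. Define the semi-discrete Fourier transform $\hat{V}^n(\eta) = h\sum_{j\in\mathbb{Z}} V_j^n e^{-ijh\eta}$ for $\eta\in[-\pi/h,\pi/h]$ and the standard Fourier transform $\hat{v}(t,\eta)$ on $\mathbb{R}$. Since $D_1$ becomes multiplication by $2i\sin(h\eta)$ and $D_2$ by $-4\sin^2(h\eta/2)$, the scheme decouples across frequencies as $\hat{V}^{n+1}(\eta) = A_n(\eta,k,h)\,\hat{V}^n(\eta)$, with $A_n$ an explicit rational function of the spatial symbols whose only randomness is in $Z_n$. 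On the continuous side, taking the Fourier transform of \eqref{eq_1dSPDE} shows that each frequency $\eta$ evolves according to the scalar linear SDE $\mathrm{d}\hat{v} = (-i\mu\eta - \tfrac12\eta^2)\hat{v}\,\mathrm{d}t - i\sqrt{\rho}\eta\hat{v}\,\mathrm{d}M_t$, giving $\hat{v}(T,\eta) = \hat{v}(0,\eta)\prod_{n=0}^{N-1} B_n(\eta,k)$ with $B_n$ an explicit function of the Brownian increment $\Delta M_n = \sqrt{k}Z_n$ (coupled to the scheme by using the same Brownian path). Because $\hat{V}^0(\eta) = e^{-ix_0\eta} = \hat{v}(0,\eta)$ on the truncated band, the Fourier-side error reduces to analysing the ratio $\prod_n A_n/B_n$.

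Next I would Taylor expand $\log(A_n/B_n)$ in $(k,h)$ with $\lambda = k/h^2$ fixed. Writing $\theta = h\eta$, the central-difference symbols satisfy $\sin\theta/h = \eta - \tfrac{1}{6}h^2\eta^3 + O(h^4)$ and $4\sin^2(\theta/2)/h^2 = \eta^2 - \tfrac{1}{12}h^2\eta^4 + O(h^4)$, producing a spatial contribution of order $k h^2$ per step; simultaneously, the Milstein Itô correction $\rho k(Z_n^2-1)D_2/(2h^2)$ reproduces, to leading order, the $\tfrac12 \sigma^2\,\mathrm{d}t$ correction in the exponential solution of the frequency-$\eta$ SDE, so that the remaining temporal error per step is of order $k^2$ with vanishing conditional mean. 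Collecting $N = T/k$ per-step contributions and grouping by order yields
\begin{equation}
\hat{V}^N(\eta) - \hat{v}(T,\eta) = k\,\hat{E}_1(\eta) + h^2\,\hat{E}_2(\eta) + o(k,h^2)\,\hat{R}(\eta),
\end{equation}
where $\hat{E}_1,\hat{E}_2,\hat{R}$ are identified explicitly as products of $\hat{v}(T,\eta)$ with stochastic integrals of deterministic symbols against $M$. Inverting the Fourier transform gives the physical-space expansion; the aliasing tail $\int_{|\eta|>\pi/h}\hat{v}(T,\eta)e^{ix_j\eta}\,\mathrm{d}\eta$ is absorbed into the remainder because $\hat{v}(T,\cdot)$ has Gaussian decay and therefore contributes terms smaller than any polynomial in $h$. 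Moment bounds on $E_1, E_2, R$ then follow from Plancherel applied to the explicit Fourier-side formulas together with the Burkholder--Davis--Gundy inequality for the stochastic integrals.

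The main obstacle will be controlling the expansion of $\log(A_n/B_n)$ uniformly over $\theta = h\eta \in [-\pi,\pi]$, since the small-$\theta$ Taylor expansion breaks down as $\theta$ approaches $\pm\pi$. In that high-frequency regime one cannot rely on $\sin\theta\approx\theta$; instead I would invoke the unconditional mean-square stability of Theorem~\ref{thm_1dImplicitStability} to bound $|A_n|$ uniformly and show that the high-frequency contributions are dominated by those from $|\theta|$ bounded away from $\pi$. A secondary difficulty is that the expansion must hold pathwise with moment-bounded remainders rather than only in expectation, so the martingale pieces involving $Z_n^2-1$ must be isolated and controlled through maximal inequalities before Plancherel is invoked, in order to obtain moment bounds on $E_1, E_2, R$ uniform in $k$ and $h$.
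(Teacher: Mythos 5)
Your plan follows essentially the same route as the paper's: the theorem itself is quoted from \cite{ref1}, but the identical Fourier machinery --- per-frequency amplification factors $X_{n+1}=C_nX_n$, Taylor expansion of the logarithm of the numerical-to-exact per-step ratio, and a split into low and high wave number regions --- is exactly what the paper deploys in Sections~\ref{sec_1d_fouriertransform}--\ref{sec_1dhighwave} and Appendix~\ref{appendix_proof_prop_meansquareconvergence}. The one refinement worth noting is that in the high-frequency region mere mean-square stability (amplification bounded by $1$) does not suffice; one needs the strict uniform damping bound $\mathbb{E}\lvert A_n\rvert^2\leq\theta_0<1$ over that region, which is precisely what the hypothesis $\rho\leq 1/\sqrt{2}$ delivers in Appendix~\ref{appendix_proof_prop_meansquareconvergence} and which, with $k/h^2$ fixed, makes the high-wave tail $o(h^r)$ for every $r>0$.
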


\begin{proposition}\label{prop_1dImplicitConvergence}
Under the conditions of Theorem~\ref{thm_1dImplicitConvergence} but without fixed $k/h^2$, then for $h^2\geq k$, Theorem~\ref{thm_1dImplicitConvergence} still holds; for $h^2<k$, the implicit Milstein scheme \eqref{eq_1dimplicitdiscrete} has the error expansion
\begin{equation}\label{eq_beginningoftheta}
V_j^N - v(T,x_j) = k\,E_1(T,x_j) + h^2\,E_2(T,x_j) + \theta^{N}h^{-1} \,E_3(T,x_j)+ o(k,h^2)\,R(T,x_j),
\end{equation}
where $N = T/k,\, 0<\theta<1$, and $\theta$ is independent of $h$ and $k$.
\end{proposition}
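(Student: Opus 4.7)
The plan is to redo the discrete Fourier analysis used for Theorem~\ref{thm_1dImplicitConvergence} in \cite{ref1}, but now without holding $\lambda:=k/h^{2}$ fixed, and to track separately the contribution of the Nyquist frequency $\omega=\pi$ (with $\omega=h\xi\in[-\pi,\pi]$), which is where the extra term is generated. Since $D_1,D_2,D_1^{2}$ have Fourier symbols $2i\sin\omega$, $-4\sin^{2}(\omega/2)$, $-4\sin^{2}\omega$, the scheme \eqref{eq_1dimplicitdiscrete} reads in Fourier space $\hat V^{n+1}(\omega)=g(\omega,Z_n)\hat V^{n}(\omega)$ with
\[
g(\omega,Z)=\frac{1-i\sqrt{\rho\lambda}\,Z\sin\omega-\tfrac{1}{2}\rho\lambda(Z^{2}-1)\sin^{2}\omega}
{1+i\mu\lambda h\sin\omega+2\lambda\sin^{2}(\omega/2)},
\]
and by \eqref{eq_1dDiracInitialApprox} one has $\hat V^{0}\equiv h^{-1}$, so $\hat V^{N}(\omega)=h^{-1}\prod_{n=0}^{N-1}g(\omega,Z_n)$. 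Inverting the DFT and subtracting $v(T,x_j)$, whose continuous Fourier transform decays like a Gaussian by \eqref{eq_1dTheoreticalResult}, expresses the error as a single integral over $\omega\in[-\pi,\pi]$.

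For frequencies bounded away from $\pi$, a Taylor expansion of $g$ around $\omega=0$ reproduces, up to $O(k^{2}+kh^{2})$ per step, the symbol of the exact one-step propagator of \eqref{eq_1dSPDE}; telescoping over $N=T/k$ steps and inverting the DFT yields $kE_{1}(T,x_j)+h^{2}E_{2}(T,x_j)+o(k,h^{2})R(T,x_j)$, exactly as in the proof of Theorem~\ref{thm_1dImplicitConvergence}. When $k/h^{2}$ is not held fixed, additional $\lambda$-dependent remainder terms appear; for $h^{2}\geq k$ one has $\lambda\leq 1$, and these are at most $O(k^{2})$, so they are absorbed into $o(k,h^{2})R$.

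The extra term comes from a neighbourhood of $\omega=\pi$, where $\sin\omega=\sin^{2}\omega=0$ and hence $g(\pi,Z)=(1+2\lambda)^{-1}$ is purely deterministic. In the regime $h^{2}<k$, i.e.\ $\lambda>1$, we may set $\theta:=1/3$ and obtain $g(\pi,Z)\leq\theta$ uniformly in $h,k$, so $\prod_n g(\pi,Z_n)\leq\theta^{N}$. Writing $\omega=\pi-\eta$ and expanding $g$ in $\eta$ gives
\[
g(\pi-\eta,Z)=\theta\bigl(1-i\sqrt{\rho\lambda}\,Z\eta+O(\eta^{2})\bigr),
\]
so that with a smooth cut-off localising around $\omega=\pi$, the inverse DFT of this localised piece produces precisely a term of the form $\theta^{N}h^{-1}E_{3}(T,x_j)$, with $E_{3}$ oscillating in $j$ as $(-1)^{j}$ and carrying the randomness through the $Z_n$-dependent factors. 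The complementary part of the integral is controlled by the uniform decay of $|g(\omega,Z)|$ away from the Nyquist mode and is geometrically smaller in $N$, hence absorbed into $R$. In the regime $h^{2}\geq k$ ($\lambda\leq 1$), the same analysis gives $-\log\theta\geq c\lambda$, so $\theta^{N}h^{-1}\leq h^{-1}\exp(-cT/h^{2})=o(h^{q})$ for every $q>0$; the extra term is then itself absorbed into $R$, which recovers Theorem~\ref{thm_1dImplicitConvergence} in that regime.

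The main obstacle is proving that $E_{3}$ has moments bounded uniformly in $h,k$ despite $\lambda$ being unbounded: path-wise, $|g(\omega,Z)|$ can exceed $1$ for large $|Z|$ because of the term $\tfrac{1}{2}\rho\lambda(Z^{2}-1)\sin^{2}\omega$ in the numerator. The key input I would rely on is the mean-square bound $\mathbb{E}|g(\omega,Z)|^{2}\leq 1+Ck$, uniformly in $\omega$ and $\lambda$, which is the analogue for \eqref{eq_1dimplicitdiscrete} of Theorem~\ref{thm_1dImplicitStability} and which is obtained by direct computation using the independence of the $Z_n$ and the identity $\sin^{2}\omega=4\sin^{2}(\omega/2)\cos^{2}(\omega/2)$ to bound the numerator by the denominator. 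Iterating this bound gives $\mathbb{E}|\prod_n g(\omega,Z_n)|^{2}\leq e^{CT}$ uniformly, from which the moments of $E_{3}$ follow after factoring out the deterministic prefactor $\theta^{N}$.
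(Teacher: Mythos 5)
Your localisation of the extra term at the Nyquist frequency is the wrong picture, and it leads to an incorrect value of $\theta$ and an unsubstantiated (in fact false) dismissal of the rest of the spectrum. At $\omega=\pi$ the amplification factor is indeed the deterministic number $(1+2\lambda)^{-1}\le 1/3$ for $\lambda>1$, but this is the \emph{most strongly damped} mode, not the least. The slowest mean-square decay in the high wave number region occurs near $\gamma\sim k^{-1/2}$, i.e.\ $\omega\sim\lambda^{-1/2}$, which is far from $\pi$ when $\lambda$ is large: there the stochastic terms in the numerator are active, and the per-step mean-square factor $f(\gamma)=\mathbb{E}|g(\omega,Z)|^2=\bigl(1+\rho\lambda\sin^2\omega+\tfrac12\rho^2\lambda^2\sin^4\omega\bigr)\big/\bigl(1+2\lambda\sin^2(\omega/2)\bigr)^2$ is only about $0.85$--$0.92$ (depending on $\rho\le 1/\sqrt{2}$), not $1/9$. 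The paper's proof is devoted precisely to this: it splits $\Omega_{\mathrm{high}}$ into $\{k^{-p}<|\gamma|<k^{-1/2}\}$ (where Gaussian decay of both exact and numerical symbols makes the contribution $o(k^r)$ for all $r$) and $\{k^{-1/2}<|\gamma|<\pi/h\}$, bounds $f$ by an explicit $g$, locates the critical points of $g$, and shows $\sup_{k^{-1/2}<\gamma<\pi/h} g(\gamma)\le\max\{g(k^{-1/2}),g(\pi/h)\}\le\theta_0<1$ uniformly in $h,k$; integrating over this interval of length $O(h^{-1})$ then yields the $\theta^{N}h^{-1}$ term. With your $\theta=1/3$ the "complementary part" that you claim is geometrically smaller is in fact the dominant contribution, decaying like $0.92^{N}$ rather than $3^{-N}$.

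Relatedly, the moment bound you propose to rely on, $\mathbb{E}|g(\omega,Z)|^2\le 1+Ck$ uniformly, only gives $\mathbb{E}\bigl|\prod_n g\bigr|^2\le e^{CT}$, i.e.\ boundedness with no decay. Over a frequency interval of measure $O(h^{-1})$ this produces an error contribution with no small factor at all, so it cannot absorb the non-Nyquist high frequencies into $R$. What the proposition actually requires --- and what the condition $\rho\le 1/\sqrt2$ is used for --- is the quantitative uniform bound $\mathbb{E}|g(\omega,Z)|^2\le\theta_0<1$ on the whole range $k^{-1/2}<|\gamma|<\pi/h$ with $\theta_0$ independent of $h$ and $k$. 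That bound, obtained via the monotonicity analysis of $g$ and the explicit evaluations at $\gamma=k^{-1/2}$, at the interior critical point (which exists only for $\rho>1/2$), and at $\gamma=\pi/h$, is the substantive content of the proof and is missing from your argument. Your treatment of the regime $h^2\ge k$ and of the low wave numbers is fine, but the core of the statement --- where $\theta$ comes from and why the whole high-frequency band contributes only $\theta^N h^{-1}$ --- is not established.
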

\begin{proof}
See Appendix \ref{appendix_proof_prop_meansquareconvergence}.
\end{proof}

\begin{remark}\label{rmk_1dImplicitConvergence}
If for some $\beta>0$, and a constant $C_0>0$ independent of $h$ and $k$,
\begin{equation}\label{eq_1d_k/hcondition}
\theta^{\frac{T}{k}} \leq 2^{-C_0}\cdot h^{3+\beta},
\end{equation}
or equivalently,
\begin{equation}\label{eq_1d_k/hcondition_2}
k\leq \frac{T\log_2(\theta^{-1})}{C_0+(3+\beta)\log_2(h^{-1})},
\end{equation}
then the implicit Milstein scheme \eqref{eq_1dimplicitdiscrete} has the error expansion
\[
V_j^N - v(T,x_j) = k\,E_1(T,x_j) + h^2\,E_2(T,x_j)+ o(k,h^2)\,R(T,x_j).
\]
\end{remark}

\begin{remark}
Theorem~\ref{thm_1dImplicitStability}, Theorem~\ref{thm_1dImplicitConvergence}, Proposition~\ref{prop_1dImplicitConvergence} and Remark~\ref{rmk_1dImplicitConvergence} are valid for both implicit Milstein schemes \eqref{eq_1dimplicitdiscrete} and \eqref{eq_1dimplicitdiscrete_2}.
\end{remark}

We consider a specific linear functional of $v$ for fixed $T$, the random variable
\begin{equation}\label{eq_1dLossTheoretical}
L = \int_{-\infty}^0 v(T,x)\,\mathrm{d}x = 1-\int_0^\infty v(T,x)\,\mathrm{d}x,
\end{equation}
as discussed in the introduction, where $v$ is the solution to (\ref{eq_1dSPDE}) and (\ref{eq_1dDiracInitial}).

By introducing integer multi-indices $\bm{l} = (l_1,l_2)\in\mathbb{N}^2$ as the index of space and time separately, we denote $L_{(l_1,\,l_2)}$ as the discrete approximations to $L$, with mesh size $h = h_0\cdot2^{-l_1}$, and timestep $k = k_0\cdot2^{-2l_2}$,
\begin{equation}\label{eq_1dLossApprox}
L_{(l_1,\,l_2)} = 1-h\sum_{j=1}^{\infty} V_{j}^N - \frac{h}{2}V_0^N,
\end{equation}
where the integral is approximated by the trapezoidal rule and $V_j^N$ is determined by (\ref{eq_1dimplicitdiscrete}). 

\begin{proposition}\label{prop_1dErrorofLoss}
Let $L: \Omega\rightarrow \mathbb{R}^+$ be the random variable given by \eqref{eq_1dLossTheoretical}. Let $L_{(l_1,\,l_2)}:\Omega\rightarrow \mathbb{R}^+$ be the approximation to $L$ given by \eqref{eq_1dLossApprox}. 
Assume $0<\rho\leq 1/\sqrt{2}$. 
Then there exists a real number $C>0$, such that for any $l_1^\ast\in\mathbb{N}$, any $h_0>0$, and any $k_0>0$ such that
\[
k_0\leq \frac{T\log_2(\theta^{-1})}{C_0+(3+\beta)(l_1^\ast + \log_2h_0^{-1})}\,,
\]
where $0<\theta<1$, $\beta>0$, $C_0>0$ are the same constants as in Remark \ref{rmk_1dImplicitConvergence}, the following holds: 

For any $0\leq l_1 \leq l_1^\ast$, $l_2\geq0$,
\[
\sqrt{\mathbb{E}\left[\lvert L_{(l_1,\,l_2)} - L\rvert^2\right]} \leq C\left(h^2+k\right),
\]
where $h = h_0\cdot 2^{-l_1},\,k = k_0\cdot 2^{-2l_2}$.
\end{proposition}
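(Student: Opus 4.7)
The plan is to split the loss-functional error into a quadrature remainder applied to the known smooth exact solution and a trapezoidal quadrature of the grid-point error $V_j^N - v(T, x_j)$, then control the latter via Remark~\ref{rmk_1dImplicitConvergence}. First I would check that the hypothesis on $k_0$ places every admissible $(h, k)$ in the regime of that remark. With $k = k_0 \cdot 2^{-2 l_2} \leq k_0$ and $\log_2 h^{-1} = l_1 + \log_2 h_0^{-1} \leq l_1^\ast + \log_2 h_0^{-1}$, the denominator in \eqref{eq_1d_k/hcondition_2} evaluated at $(h, k)$ is no larger than the one appearing in the hypothesis, so condition \eqref{eq_1d_k/hcondition_2} is satisfied for every $0 \leq l_1 \leq l_1^\ast$ and $l_2 \geq 0$. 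Remark~\ref{rmk_1dImplicitConvergence} therefore delivers
\[
v(T, x_j) - V_j^N = -k\, E_1(T, x_j) - h^2\, E_2(T, x_j) - o(k, h^2)\, R(T, x_j),
\]
with $E_1, E_2, R$ random fields whose moments are bounded uniformly in $j, h, k$.

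Next I would decompose $L - L_{(l_1, l_2)} = Q + D$, where
\[
Q = \frac{h}{2}\, v(T, 0) + h \sum_{j \geq 1} v(T, x_j) - \int_0^\infty v(T, x)\, \mathrm{d}x, \qquad D = \frac{h}{2}\bigl(V_0^N - v(T, 0)\bigr) + h \sum_{j \geq 1} \bigl(V_j^N - v(T, x_j)\bigr).
\]
Because the exact solution \eqref{eq_1dTheoreticalResult} is analytic in $x$ with super-exponential decay, the Euler--Maclaurin formula on $[0, \infty)$ yields $|Q| \leq C h^2 \lvert \partial_x v(T, 0)\rvert + O(h^{2n})$ for every $n$. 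The coefficient $\partial_x v(T, 0)$ is a bounded smooth function of the single Gaussian $M_T$ and hence has finite moments of all orders, so $\sqrt{\mathbb{E}\lvert Q\rvert^2} \leq C h^2$.

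For $D$ I would insert the expansion to obtain
\[
D = -k\, S_1 - h^2\, S_2 - o(k, h^2)\, S_R, \qquad S_i := \tfrac{h}{2} E_i(T, 0) + h \sum_{j \geq 1} E_i(T, x_j),
\]
and analogously for $S_R$. By Minkowski's inequality the desired bound reduces to $\sup_h \mathbb{E}\lvert S_i\rvert^2 < \infty$. This is the main technical obstacle: it requires that the error random fields carry enough spatial decay for their Riemann-type sums to be bounded in $L^2(\Omega)$ uniformly in $h$. I would establish this using the Fourier-analytic representation employed throughout the paper, in which the leading-order error coefficients inherit essentially the same Gaussian envelope in $x$ as $v(T, \cdot)$; each $S_i$ is then a uniformly controlled $L^2(\Omega)$ Riemann-sum approximation of $\int_0^\infty E_i(T, x)\, \mathrm{d}x$. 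Combining the bounds on $Q$ and $D$ via the triangle inequality yields $\sqrt{\mathbb{E}\lvert L - L_{(l_1, l_2)}\rvert^2} \leq C (k + h^2)$, as claimed.
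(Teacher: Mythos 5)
Your proposal is correct and follows essentially the same route as the paper: the identical splitting into a trapezoidal quadrature error on the exact Gaussian solution (bounded in $L^2(\Omega)$ by $Ch^2$) plus an $h$-weighted sum of the grid-point discretisation errors controlled through the error expansion. The only cosmetic differences are that you invoke Euler--Maclaurin where the paper uses the cruder per-interval trapezoidal bound, and that the uniform $L^2(\Omega)$ bound on the spatially summed error coefficients --- which you rightly flag as the main technical point and propose to obtain from the Fourier representation --- is handled in the paper by directly citing the bound $\mathbb{E}\big[|h\sum_j \widetilde{C}_j|^2\big]<C_1$ from the proof of Theorem 2.2 in \cite{ref1}.
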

\begin{proof}
Denote the trapezoidal approximation of $1-\int_0^\infty v(T,x)\,\mathrm{d}x$ with mesh size $h$ by
$$\overline{L} = 1-h\sum_{j=1}^{\infty}v(T,x_j) - \frac{h}{2}v(T,0),$$
where $x_j = jh$.
Since both $L$ and $\overline{L}$ use the theoretical, smooth $v(T,x)$, we have
\begin{align*}
\left|\overline{L}-L\right|&= \left| \sum_{j=1}^\infty\left[ \int_{x_{j-1}}^{x_j} v(T,x)\,\mathrm{d}x - \frac{1}{2}h\left( v(T,x_{j-1}) + v(T,x_j) \right) \right]\right|\\
&\leq \frac{1}{12}h^3\sum_{j=1}^\infty \max_{\zeta\in[x_{j-1},x_j]}\left|\frac{\partial^2 v}{\partial x^2}(T,\zeta)\right|.
\end{align*}

Given $v(T,x)$ from \eqref{eq_1dTheoreticalResult},
\[
v(T,x) = \frac{1}{\sqrt{2\pi(1-\rho)T}}\exp\left(-\frac{\big(x-x_0-\mu T-\sqrt{\rho}M_T\big)^2}{2(1-\rho)T}\right),
\]
for all $h_0$, there exists a constant $C_0$, such that for $h\leq h_0$,
\[
\mathbb{E}\left[\left|h\sum_{i=1}^\infty \max_{\zeta\in[x_{j-1},x_j]}\left|\frac{\partial^2 v}{\partial x^2}(T,\zeta)\right|\right|^2\right] \leq C_0.
\]

Therefore
\[
\mathbb{E}\left[\left\lvert \overline{L} - L\right\rvert^2\right]\leq \frac{1}{144}C_0\,h^4.
\]

From the proof of Theorem 2.2 in \cite{ref1}, there exist random variables $\widetilde{C}_j$, and a constant $C_1>0$, all of which are independent of $(l_1,l_2)$, such that 
\[
\Big| V_j^N - v(T,x_j) \Big| \leq \widetilde{C}_j (h^2+k),
\]
where
\[
\mathbb{E}\bigg[ \bigg|h\sum_{j=0}^{\infty} \widetilde{C}_j\bigg|^2 \bigg]< C_1 < \infty.
\]

Combining above equations, we have
\begin{align*}
&\quad\mathbb{E}\left[\left\lvert L_{(l_1,\,l_2)} - L\right\rvert^2\right] = \mathbb{E}\left[\left\lvert L_{(l_1,\,l_2)}-\overline{L} + \overline{L} - L\right\rvert^2\right]\leq 2\, \mathbb{E}\left[\left\lvert L_{(l_1,\,l_2)} - \overline{L}\right\rvert^2\right] + 2\,\mathbb{E}\left[\left\lvert \overline{L} - L\right\rvert^2\right]\\
&= 2\mathbb{E}\left[\bigg| h\sum_{j=1}^{\infty}\big(v(T,x_j)-V_j^N\big) + \frac{h}{2}\big(v(T,0) - V_0^N\big)\bigg|^2\right] + 2\,\mathbb{E}\left[\left\lvert \overline{L} - L\right\rvert^2\right]\\
&\leq 4\big(h^4+ k^2\big) \mathbb{E}\left[ \bigg|h\sum_{j=0}^\infty \widetilde{C}_j\bigg|^2 \right] + \frac{1}{72}C_0\,h^4.
\end{align*}
Therefore, by choosing $C = \sqrt{\frac{1}{72}C_0+4C_1}$, we have for all $0\leq l_1 \leq l_1^\ast$, $l_2\geq0$,
$$\sqrt{\mathbb{E}\left[\lvert L_{(l_1,\,l_2)} - L\rvert^2\right]} \leq C\left(h^2+k\right).$$
\end{proof}

\section{Multi-index Monte Carlo discretisation on a space-time mesh}\label{sec_1dMIMCEstimator}

Following the idea in \cite{ref5}, we introduce $\Delta_i$, the first order difference operator along directions $i=1,2$, defined as
\begin{equation}\label{eq_1dfirstorderdifference}
\Delta_iL_{\bm{l}} = 
\begin{cases}
L_{\bm{l}}-L_{\bm{l}-\bm{e}_i},\quad &\text{if}\ l_i>0,\\
L_{\bm{l}}&\text{if}\ l_i=0,
\end{cases}
\end{equation}
with $\bm{e}_i$ being the canonical vectors in $\mathbb{R}^2$, i.e., $(\bm{e}_i)_j = \delta_{ij}$. Then define the first order mixed difference operator $\Delta = \Delta_1\otimes\Delta_2$. Hence, for $l_1>0,\ l_2>0$, we have
\begin{equation}\label{eq_1dDeltaLinDetails}
\begin{aligned}
\Delta L_{(l_1,\,l_2)} &= \left(L_{(l_1,\,l_2)}-L_{(l_1,\,l_2-1)}\right)-\left(L_{(l_1-1,\,l_2)}-L_{(l_1-1,\,l_2-1)}\right)\\
&= L_{(l_1,\,l_2)}-L_{(l_1,\,l_2-1)}-L_{(l_1-1,\,l_2)}+L_{(l_1-1,\,l_2-1)}.
\end{aligned}
\end{equation}

A telescoping sum then gives, for any $l_1^\ast,\,l_2^\ast \ge 0$,
\[
L_{(l_1^\ast,\,l_2^\ast)} = \sum_{l_1=0}^{l_1^\ast}\sum_{l_2=0}^{l_2^\ast} \Delta L_{(l_1,\,l_2)},\quad\quad \mathbb{E}\big[L_{(l_1^\ast,\,l_2^\ast)}\big] = \sum_{l_1=0}^{l_1^\ast}\sum_{l_2=0}^{l_2^\ast} \mathbb{E}\big[\Delta L_{(l_1,\,l_2)}\big].
\]
By setting $\mathcal{I}_0=\{(l_1,l_2)\in\mathbb{N}^2: l_1\leq l_1^\ast,\,l_2\leq l_2^\ast \}$ and $L_{\mathcal{I}_0} = \sum_{(l_1,\,l_2)\in\mathcal{I}_0}\Delta L_{(l_1,\,l_2)}$,
with $h_{l^\ast},\,k_0$ satisfy the condition \eqref{eq_1d_k/hcondition}. Then the weak error between $L_{(l_1^\ast,\,l_2^\ast)}$ can be formed as 
\begin{equation}\label{eq_1d_weakerror0}
\mathbb{E}\big[L_{(l_1^\ast,\,l_2^\ast)}-L\big] = \sum_{l_1=0}^{l_1^\ast}\sum_{l_2=0}^{l_2^\ast} \mathbb{E}\big[\Delta L_{(l_1,\,l_2)}\big]<\varepsilon_0.
\end{equation}

However, this is not necessarily the most efficient way of approximating $L$, since the approximations $L_{(l_1,\,l_2)}$ with $(l_1,\,l_2)$ close to $(l_1^\ast,\,l_2^\ast)$ are highly accurate but also costly to compute. Instead, we only compute the levels in a minimal index set $\mathcal{I}\subset\mathcal{I}_0$ (adapted to the convergence in $h$ and $k$) which satisfies
\begin{equation}\label{eq_1d_weakerror1}
\Bigg|\sum_{(l_1,\,l_2)\in\mathcal{I}_0\setminus\mathcal{I}}\mathbb{E}\big[\Delta L_{(l_1,\,l_2)}\big]\Bigg|<\varepsilon_1,
\end{equation}
for a given error $\varepsilon_1$. Then it can be easily proved that
\begin{equation}\label{eq_1d_weakerror}
\Bigg|\sum_{(l_1,\,l_2)\in\mathcal{I}}\mathbb{E}\big[\Delta L_{(l_1,\,l_2)}\big] -\mathbb{E}\big[L\big]\Bigg|\leq\varepsilon_0+\varepsilon_1.
\end{equation}

The MIMC estimator is now defined by (see \cite{ref5})
\begin{equation}\label{eq_1dMIMCestimator}
\widehat{L}_{\mathcal{I}}\coloneqq\sum_{(l_1,\,l_2)\in\mathcal{I}}\frac{1}{M_{(l_1,\,l_2)}}\sum_{m=1}^{M_{(l_1,\,l_2)}}\Delta L_{(l_1,\,l_2)}^{(m)},
\end{equation}
where $\mathcal{I}\subset\mathbb{N}^2$ is an index set, and $M_{(l_1,l_2)}$ is the number of samples for each $(l_1,l_2)\in\mathcal{I}$. The key point in the algorithm is that the quantity $\Delta L_{(l_1,\,l_2)}$ comes from four discrete approximations using the same Brownian path, therefore the variance of $\Delta L_{(l_1,\,l_2)}$ is small, in a sense made precise in Theorem \ref{thm_1dmainresult}. 
Thus the moments of $\Delta L_{(l_1,\,l_2)}$ are small not only if both $k$ and $h$ are small, but it suffices that either $k$ or $h$ is small. This allows the omission of computationally costly indices.
The MIMC algorithm is based on a good choice of $\mathcal{I}$ and, for given $(l_1,\,l_2)\in\mathcal{I}$, $M_{(l_1,\,l_2)}$ to balance bias and variance for a given accuracy target.

Denote $E_{(l_1,\,l_2)}\coloneqq\left|\mathbb{E}\left[\Delta L_{(l_1,\,l_2)}\right]\right|$, $V_{(l_1,\,l_2)}\coloneqq\mathrm{Var}\big[\Delta L_{(l_1,\,l_2)}\big]$, and $W_{(l_1,\,l_2)}$ the average work required for a realization of $\Delta L_{(l_1,\,l_2)}$ for a single path. Then the total work corresponding to the estimator is
\begin{equation}\label{eq_1dTotalwork}
W = \sum_{(l_1,\,l_2)\in\mathcal{I}}W_{(l_1,\,l_2)}M_{(l_1,\,l_2)}.
\end{equation}
As the paths are chosen independently for different $(l_1,\,l_2)$, the variance of the estimator is
\begin{equation}\label{eq_1dvarestimator}
\mathrm{Var}\big[\widehat{L}_{\mathcal{I}}\big] = \sum_{(l_1,\,l_2)\in\mathcal{I}}\frac{V_{(l_1,\,l_2)}}{M_{(l_1,\,l_2)}}.
\end{equation}

The mean square error can be expressed as the sum of two contributions, bias and variance,
\begin{equation}\label{eq_1drms}
\mathbb{E}\Big[\big(\widehat{L}_{\mathcal{I}}-\mathbb{E}[L]\big)^2\Big] = \mathrm{Var}\big[\widehat{L}_{\mathcal{I}}\big] + \Big(\mathbb{E}\big[\widehat{L}_{\mathcal{I}}\big]-\mathbb{E}[L]\Big)^2.
\end{equation}
To achieve a root mean square error of $\varepsilon$, we split the accuracy as follows,
\begin{align*}
\big|\mathbb{E}\big[\widehat{L}_{\mathcal{I}}-L\big]\big| &\leq \alpha\varepsilon, \qquad \mathrm{Var}\big[\widehat{L}_{\mathcal{I}}\big] \leq \big(1-\alpha^2\big)\varepsilon^2,
\end{align*}
where $0<\alpha< 1$. By optimising the total work with respect to $M_{\bm{l}}$ given the variance constraint~\eqref{eq_1dvarestimator} and a fixed $\mathcal{I}$, we can derive (see also \cite{ref5}) the optimal number of samples for each level $\bm{l} = (l_1,\,l_2)$ to be
\begin{equation}\label{eq_M}
M_{\bm{l}} = (1-\alpha^2)^{-2}\varepsilon^{-2}\bigg(\sum_{\bm{l}\in\mathcal{I}}\sqrt{V_{\bm{l}}W_{\bm{l}}}\bigg)\sqrt{\frac{V_{\bm{l}}}{W_{\bm{l}}}}.
\end{equation}
In the numerical implementation, we take the integer ceiling of $M_{(l_1,\,l_2)}$ in \eqref{eq_M}, as a result we assume the bound
\begin{align*}
M_{\bm{l}} \leq 1 + (1-\alpha^2)^{-2}\varepsilon^{-2}\bigg(\sum_{\bm{l}\in\mathcal{I}}\sqrt{V_{\bm{l}}W_{\bm{l}}}\bigg)\sqrt{\frac{V_{\bm{l}}}{W_{\bm{l}}}},\qquad \forall\, \bm{l}\in\mathcal{I}.
\end{align*}
Therefore the total work is bounded by
\begin{equation}\label{eq_totalwork}
W\leq (1-\alpha^2)^{-2}\varepsilon^{-2}\bigg(\sum_{\bm{l}\in\mathcal{I}}\sqrt{V_{\bm{l}}W_{\bm{l}}}\bigg)^2 + \sum_{\bm{l}\in\mathcal{I}}W_{\bm{l}},
\end{equation}
where the second term is usually negligible.

In \cite{ref5}, Theorem 2.2 shows the total computational cost using an optimal index set with the MIMC method given bounds on $E_{(l_1,\,l_2)}$ and $V_{(l_1,\,l_2)}$. Based on these ideas, we first consider the Fourier analysis of each level $\Delta L_{(l_1,\,l_2)}$, then find an index set $\mathcal{I}$ to achieve the optimal order of the total work. Though the complexity is the same for both schemes \eqref{eq_1dimplicitdiscrete} and \eqref{eq_1dimplicitdiscrete_2} when using the multilevel method, we will see that it is different using the multi-index method.  

\section{Fourier analysis of MIMC estimators}\label{sec_1dMIMCFourier}
In this section, we show that the MIMC increments \eqref{eq_1dDeltaLinDetails} for scheme \eqref{eq_1dimplicitdiscrete} satisfy the moment conditions required for the analysis of \cite{ref5}. We then derive the optimal index set and the resulting complexity of the MIMC estimator in Section \ref{sec_1dOptimalIndex}. In what follows, we will use $C$ to stand for generic positive real constants dependent only on model parameters, and their values may change between occurrences.

\subsection{Main results}
The following theorem is concerned with the first and second moments of $\Delta L_{(l_1,\,l_2)}$ with schemes \eqref{eq_1dimplicitdiscrete} and \eqref{eq_1dLossApprox}. 
\begin{theorem}\label{thm_1dmainresult}
Consider $L_{(l_1,\,l_2)}$ from \eqref{eq_1dLossApprox} with the implicit Milstein scheme~\eqref{eq_1dimplicitdiscrete}. Assume $0<\rho\leq 1/\sqrt{2}$. 
Then there exists a real number $C>0$, such that for any $l_1^\ast\in\mathbb{N}$, any $h_0>0$, and any $k_0>0$ such that
\[
k_0\leq \frac{T\log_2(\theta^{-1})}{C_0+(3+\beta)(l_1^\ast + \log_2h_0^{-1})}\,,
\]
where $0<\theta<1$, $\beta>0$, $C_0>0$ are the same constants as in Remark \ref{rmk_1dImplicitConvergence}, the following holds:

For any $0\leq l_1 \leq l_1^\ast$, $l_2\geq0$, the first and second moments of $\Delta L_{(l_1,\,l_2)}$ satisfy
\begin{equation}
\big|\mathbb{E}\left[\Delta L_{(l_1,\,l_2)}\right]\big| \leq C\,h^2k,\qquad
\mathbb{E}\big[\left|\Delta L_{(l_1,\,l_2)}\right|^2\big] \leq C\,h^4k^2,
\end{equation}
where $h=h_0\cdot 2^{-l_1}$, $k = k_0\cdot 2^{-2l_2}$.
\end{theorem}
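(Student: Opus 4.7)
My plan is to carry out the error analysis in spatial Fourier space, where the constant-coefficient implicit Milstein scheme \eqref{eq_1dimplicitdiscrete} diagonalises. Let $\hat V^n(\xi)$ denote the discrete-time Fourier transform on the lattice $h\mathbb{Z}$. The recursion \eqref{eq_1dimplicitdiscrete} then becomes a scalar multiplicative relation $\hat V^{n+1}(\xi)=A(\xi;Z_n,h,k)\,\hat V^n(\xi)$, with an amplification factor $A$ explicit in $\sin(h\xi)$, $\cos(h\xi)$, $k$ and $Z_n$; iteration yields $\hat V^N(\xi)=\bigl(\prod_{n=0}^{N-1}A(\xi;Z_n,h,k)\bigr)\hat V^0(\xi)$. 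Itô applied to \eqref{eq_1dSPDE} gives the exact log-amplification $G(\xi)=-i\mu\xi T-\tfrac12(1-\rho)\xi^2 T-i\sqrt{\rho}\,\xi M_T$ for $\hat v(T,\xi)/\hat v(0,\xi)$, against which $\log\prod_n A$ will be matched. Since the trapezoidal functional $L_{(l_1,l_2)}$ in \eqref{eq_1dLossApprox} is a continuous linear image of $\hat V^N$ via an $h$-dependent quadrature kernel (by Parseval), all four terms of the mixed difference \eqref{eq_1dDeltaLinDetails} can be represented in a common frequency picture.

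The heart of the proof is a refined per-step expansion of $\log A(\xi;Z_n,h,k)$ as a double Taylor series in $k$ and in $h\xi$. The implicit Milstein correction is precisely what cancels the $O(k)$ error in the log-amplification, while the central difference stencils give $O(h^2)$ spatial consistency, so the per-step residual is of order $k^2+h^2 k+h^4$. Summing over $N=T/k$ steps, exponentiating, and passing through the quadrature kernel produces a physical-space expansion of the form
\[
L_{(l_1,l_2)}-L \;=\; k\,E_1+h^2\,E_2+h^2k\,E_{12}+R(h,k),
\]
with $E_1,E_2,E_{12}$ random variables whose first and second moments are bounded uniformly in $(l_1,l_2)$ in the stated range, and $\mathbb{E}|R(h,k)|^2=o(h^4k^2)$. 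Substituting this expansion into the four corners of the stencil with mesh pairs $(h,k)$, $(h,4k)$, $(2h,k)$, $(2h,4k)$, a monomial $h^a k^b$ is multiplied by the combinatorial factor $(1-2^a)(1-4^b)$. Thus $L$ itself cancels, and so do the pure $h^2$ and pure $k$ contributions; the surviving leading term is $9\,h^2 k\,E_{12}$, giving $\Delta L_{(l_1,l_2)}=9\,h^2 k\,E_{12}+\Delta R$ with $\mathbb{E}|\Delta R|^2=o(h^4 k^2)$. The two moment bounds in the statement then follow at once by taking expectations and $L^2$-norms, respectively.

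The hard part is the uniform control of the remainder $R$, together with the high-frequency contribution. For moderate $|\xi|$ the Taylor remainder in $\log A$ is a polynomial in $\xi$ times $k^2+h^4+h^2k^2+h^4k$, and the Gaussian decay of $|\hat v(T,\xi)|^2$ visible in \eqref{eq_1dTheoreticalResult} absorbs the polynomial growth in the Plancherel step. For $|\xi|$ close to the Nyquist frequency $\pi/h$, the unconditional mean-square stability of Theorem~\ref{thm_1dImplicitStability} provides $|A|\le 1$, so aliasing contributions are at worst of the same order as the principal remainder. The constraint on $k_0$ inherited from Remark~\ref{rmk_1dImplicitConvergence} is exactly what suppresses the spurious $\theta^N h^{-1}$ mode identified in Proposition~\ref{prop_1dImplicitConvergence}, forcing $\theta^N h^{-1}\le h^{2+\beta}$ and turning it into a genuine $o(h^2)$ contribution. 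Once these frequency-wise bounds are in place, Plancherel transfers them to the physical-space moment estimates needed to close the argument.
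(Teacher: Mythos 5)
Your overall framework---spatial Fourier diagonalisation of the constant-coefficient scheme, a low/high wave-number split, and the $k_0$ condition to suppress the spurious $\theta^N h^{-1}$ mode of Proposition~\ref{prop_1dImplicitConvergence}---matches the paper's. However, the central step has a genuine gap. You posit a single-level expansion $L_{(l_1,l_2)}-L = k\,E_1+h^2\,E_2+h^2k\,E_{12}+R$ with level-independent random coefficients and $\mathbb{E}|R|^2=o(h^4k^2)$, and then let the stencil factor $(1-2^a)(1-4^b)$ do the work. That remainder bound cannot hold: the single-level error necessarily contains contributions of size $O(k^2)$ and $O(h^4)$ which are not $o(h^4k^2)$ (e.g.\ $k^2/(h^4k^2)=h^{-4}\to\infty$), and, more importantly, the coefficients one actually obtains from expanding $\log\prod_n A$ are \emph{not} level-independent---for instance the $O(k)$ coefficient involves $\sqrt{k}\sum_{n}\phi_1(Z_n)$, which changes with the time level---so the clean combinatorial cancellation you invoke is precisely the pathwise tensor-product error expansion that the paper stresses does \emph{not} hold for this problem. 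The paper instead computes the first and mixed differences of $\exp\big(\sum_n e_n^{l_1,l_2}\big)$ directly in Fourier space (equations \eqref{eq_1d2ExpDifference} and \eqref{eq_1d4ExpDifference}) and bounds the first and second moments of the result.

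Second, and decisively, your per-step accounting (``residual of order $k^2+h^2k+h^4$'') is a deterministic Taylor count that misses the stochastic bookkeeping on which the whole theorem turns. The log-amplification contains the term $\big(a+\tfrac12 c^2\big)\rho k\sum_n Z_n^2$ with $a=-\sin^2(\gamma h)/(2h^2)$ and $c=\sin(\gamma h)/h$; the identity $a+\tfrac12 c^2=0$, special to scheme \eqref{eq_1dimplicitdiscrete}, is what annihilates it. For the alternative scheme \eqref{eq_1dimplicitdiscrete_2} the analogous coefficient $\hat a+\tfrac12 c^2$ is $O(\gamma^4h^2)$, and the fluctuation $\sum_n Z_n^2-4\sum_n\widetilde{Z}_n^2$ has $L^2$ norm $O(k^{-1/2})$, which degrades the variance to $O(h^4k)$ (Section~\ref{sec_1dalternative}). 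Your argument uses no feature distinguishing the two schemes, so it would equally ``prove'' $\mathbb{E}\big[|\Delta L_{(l_1,l_2)}|^2\big]\le C h^4k^2$ for \eqref{eq_1dimplicitdiscrete_2}, which is false. Finally, in the high-frequency region, mean-square stability gives control of $\mathbb{E}|A|^2$, not a pathwise bound $|A|\le1$, and what is actually needed is the geometric damping estimate of Lemma~\ref{lem_1dhighwave} (using $\rho\le 1/\sqrt2$ and the computation in Appendix~\ref{appendix_proof_prop_meansquareconvergence}), not a one-line aliasing remark.
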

\begin{proof}
See Section \ref{sec_proofofmaintheorem}.
\end{proof}
\begin{remark}
The condition $0<\rho\leq 1/\sqrt{2}$ is the same already required for the mean-square stability in Theorem \ref{thm_1dImplicitStability}. It will be used for the damping of high wave number components (see Section \ref{sec_1dhighwave}).
\end{remark}
\begin{remark}
In Theorem~\ref{thm_1dmainresult}, to make sure the condition \eqref{eq_1d_k/hcondition} holds, $k_0$ is not fixed. Instead, $k_0$ depends on $l_1^\ast$. 
\end{remark}

It follows from Theorem~\ref{thm_1dmainresult} that 
the MIMC increments~\eqref{eq_1dDeltaLinDetails} (for scheme \eqref{eq_1dimplicitdiscrete} and approximation \eqref{eq_1dLossApprox} of $L$) satisfy
\begin{align}
  &E_{(l_1,\,l_2)} = \mathbb{E}[\Delta L_{(l_1,\,l_2)}]\leq C_1^\ast\,h_0^2\,k_0\cdot 2^{-2(l_1+l_2)}, \label{eq_1dassumptions_1}\\
  &V_{(l_1,\,l_2)} = \mathrm{Var}[\Delta L_{(l_1,\,l_2)}]\leq C_2^\ast\,h_0^4\,k_0^2\cdot 2^{-4(l_1+l_2)}, \label{eq_1dassumptions_2}\\
  &W_{(l_1,\,l_2)} \,\leq C_3^\ast\,h_0^{-1}k_0^{-1}\cdot2^{\,l_1+2l_2}, \label{eq_1dassumptions_3}
\end{align}
where $C_1^\ast,C_2^\ast,C_3^\ast$ are positive constants, and the constants will be used in the numerical implementation.

These are the Assumptions 1, 2, 3 in \cite{ref5}. We will follow their approach in Section \ref{sec_1dOptimalIndex} to construct the index set $\mathcal{I}$, choose the number $M_{(l_1,\,l_2)}$ of samples and derive the complexity.

Similar to the proof of Theorem~\ref{thm_1dmainresult}, we have the following.
\begin{remark}\label{rmk_1dfirstorderdifference}
Assume $0<\rho\leq 1/\sqrt{2}$. Then there exists a real number $C>0$, such that for any $l_1^\ast\in\mathbb{N}$, any $h_0>0$, and any $k_0>0$ such that
\[
k_0\leq \frac{T\log_2(\theta^{-1})}{C_0+(3+\beta)(l_1^\ast + \log_2h_0^{-1})}\,,
\]
where $0<\theta<1$, $\beta>0$, $C_0>0$ are the same constants as in Remark \ref{rmk_1dImplicitConvergence}, the following holds:

For any $0\leq l_1 \leq l_1^\ast$, $l_2\geq0$ with $h=h_0\cdot 2^{-l_1}$, $k = k_0\cdot 2^{-2l_2}$, the first order differences of $L_{(l_1,\,l_2)}$ derived from \eqref{eq_1dimplicitdiscrete} and \eqref{eq_1dLossApprox} have the first and second moments
\begin{align*}
\big|\mathbb{E}\left[\Delta_1 L_{(l_1,\,l_2)}\right]\big|\leq C\, h^2,&\qquad
\mathbb{E}\big[\left|\Delta_1 L_{(l_1,\,l_2)}\right|^2\big]\leq C\, h^4,\\
\big|\mathbb{E}\left[\Delta_2 L_{(l_1,\,l_2)}\right]\big|\leq C\, k,\ \;&\qquad
\mathbb{E}\big[\left|\Delta_2 L_{(l_1,\,l_2)}\right|^2\big]\leq C\,k^2,
\end{align*}
using the notation from \eqref{eq_1dfirstorderdifference}.
\end{remark}

In the remainder of this section, we give a proof of Theorem~\ref{thm_1dmainresult}. The idea of the proof follows \cite{carter2007sharperror}.

\subsection{Fourier transform of the solution}\label{sec_1d_fouriertransform}
Define  the Fourier transform pair
\begin{align*}
\widetilde{v}(t,\gamma) &= \int_{-\infty}^\infty v(t,x)\mathrm{e}^{-\mathrm{i}\gamma x}\,\mathrm{d}x,\\
v(t,x)&=\frac{1}{2\pi}\int_{-\infty}^\infty\widetilde{v}(t,\gamma)\mathrm{e}^{\mathrm{i}\gamma x}\,\mathrm{d}\gamma.
\end{align*}
The Fourier transform of \eqref{eq_1dSPDE} yields
\begin{equation}\label{eq_1dfourier}
\mathrm{d}\widetilde{v} = -\widetilde{v}\Big(\mathrm{i}\mu\gamma\,\mathrm{d}t + \frac{1}{2}\gamma^2\,\mathrm{d}t + \mathrm{i}\sqrt{\rho}\gamma\,\mathrm{d}M_t\Big),
\end{equation}
subject to the initial data $\widetilde{v}(0,\gamma) = \mathrm{e}^{-\mathrm{i}\gamma x_0}$ from \eqref{eq_1dDiracInitial}. We assume that $\mu =0$ in the following for simplicity, since the results will not change when a drift term appears. (See Remark 2.3 in \cite{ref1}.)

The solution to \eqref{eq_1dfourier} is then
$$\widetilde{v}(t) = X(t)\mathrm{e}^{-\mathrm{i}\gamma x_0},$$
where
\begin{equation}\label{eq_1dsolXn}
X(t) = \exp\Big(-\frac{1}{2}(1-\rho)\gamma^2t-\mathrm{i}\gamma\sqrt{\rho}M_t\Big).
\end{equation}

For the discretised equation, we can use a discrete-continuous Fourier decomposition
$$V_j^0 = \frac{1}{2\pi h}\int_{-\pi} ^{\pi} \widetilde{V}^0(\gamma)\mathrm{e}^{\mathrm{i}(j-j_0)\gamma}\,\mathrm{d}\gamma,$$
for $j_0$ defined above \eqref{eq_1dDiracInitialApprox}, where 
$$\widetilde{V}^0(\gamma) = h\sum_{j=-\infty}^\infty V_j^0\mathrm{e}^{-\mathrm{i}(j-j_0)\gamma}.$$

Since we approximate the Dirac initial data by \eqref{eq_1dDiracInitialApprox}, it follows that $\widetilde{V}^0(\gamma) = 1.$ Then by linearity of the equation, we have
\begin{equation}\label{eq_1dDiscreteFourierV}
V_j^n = \frac{1}{2\pi h}\int_{-\pi}^{\pi} \widetilde{V}^n(\gamma)\mathrm{e}^{\mathrm{i}(j-j_0)\gamma}\,\mathrm{d}\gamma = \frac{1}{2\pi }\int_{-\pi/h}^{\pi/h} \widetilde{V}^n(\gamma)\mathrm{e}^{\mathrm{i}(j-j_0)\gamma h}\,\mathrm{d}\gamma,
\end{equation}
where we make the ansatz
\begin{equation}\label{eq_1dDiscreteFourierAnsatz}
\widetilde{V}^n(\gamma) = X_n(\gamma)\widetilde{V}^0(\gamma) = X_n(\gamma).
\end{equation}

We can write \eqref{eq_1dimplicitdiscrete} as
\begin{equation}\label{eq_1dimplicitdiscrete2}
\bigg(I + \frac{\mu k}{2h}D_1 -\frac{k}{2h^2}D_2\bigg)V^{n+1} = \bigg(I-\frac{\sqrt{\rho k}Z_n}{2h}D_1 + \frac{\rho k(Z_n^2-1)}{8h^2}D_1^2\bigg)V^n.
\end{equation}
Inserting in \eqref{eq_1dimplicitdiscrete2}, a simple calculation gives
\begin{equation}\label{eq_1dCn1}
X_{n+1}(\gamma) = \frac{1-\mathrm{i}c\sqrt{\rho k}Z_n+a\rho k (Z_n^2-1)}{1-\hat{a}k}X_n(\gamma),
\end{equation}
where
\begin{equation}\label{eq_1d_a_ahat_c}
a = -\frac{\sin^2\gamma h}{2h^2},\qquad
\hat{a} = -\frac{2\sin^2(\gamma h/2)}{h^2},\qquad
c = \frac{\sin\gamma h}{h}.
\end{equation}

Following \cite{higham2000mean,ref1}, we say that the implicit Milstein scheme is mean-square stable, provided
\[
\lim_{n\rightarrow \infty}\mathbb{E}\left[|X_n|^2\right]= 0\quad\mbox{for all $\gamma$.}
\]
Therefore, by stationarity we need
$$\mathbb{E}\big[|X_{n+1}|^2\big] < \mathbb{E}\big[|X_n|^2\big],$$
i.e.,
$$\mathbb{E}\bigg[\left|\frac{1-\mathrm{i}c\sqrt{\rho k}Z_n+a\rho k (Z_n^2-1)}{1-\hat{a}k}\right|^2\bigg]< 1.$$
This gives a condition on the correlation for mean-square stability (see \cite{ref1}), stated previously as Theorem~\ref{thm_1dImplicitStability}.

Now that the stability is ensured, we can approximate the functional of the solutions.

\subsection{Fourier analysis of MIMC estimators (proof of Theorem~\ref{thm_1dmainresult})}\label{sec_proofofmaintheorem}
From~\eqref{eq_1dLossApprox}, \eqref{eq_1dDiscreteFourierV} and \eqref{eq_1dDiscreteFourierAnsatz}, we can give a discrete approximation of the functional $L(v(T,\cdot))$ of the form
\begin{align*}
L_{(l_1,\,l_2)} = 1-h\sum_{j=1}^{\infty} V_{j}^N - \frac{h}{2}V_0^N = 1- \frac{1}{2\pi}\int_{-\pi/h}^{\pi/h} hX_N(\gamma) \bigg(\sum_{j=-j_0+1}^{\infty} \mathrm{e}^{\mathrm{i}jh\gamma}+\frac{1}{2}\mathrm{e}^{-\mathrm{i}j_0h\gamma}\bigg)\,\mathrm{d}\gamma.
\end{align*}

To simplify the notation, we define 
\begin{equation}\label{eq_1dFourierNotationSimplify}
\chi(h,\gamma) = \sum_{j=-j_0+1}^{\infty} \mathrm{e}^{\mathrm{i}jh\gamma}+\frac{1}{2}\mathrm{e}^{-\mathrm{i}j_0h\gamma},
\end{equation}
where $j_0= x_0/h$ as before. Note that $\chi$ is defined in a distributional sense, and it only appears multiplied by the smooth, fast decaying function $X_N$ and in integral form, hence this is well-defined.
Then $L_{(l_1,\,l_2)}$ has the form
\begin{equation}\label{eq_1dloss}
L_{(l_1,\,l_2)} = 1-\frac{1}{2\pi}\int_{-\pi/h}^{\pi/h} hX_N(\gamma) \chi(h,\gamma)\,\mathrm{d}\gamma.
\end{equation}

Now we derive the leading order term of $\Delta L_{(l_1,\,l_2)}$, assuming that $\rho\leq 1/\sqrt{2}$ holds.

Let $N' = N/4$ be the final timestep for level $(l_1,\,l_2-1)$. We write $X_N(\gamma)$ as~$X_N$, and from~\eqref{eq_1dloss} we have
\begin{equation}\label{eq_1dOriginalDeltaL}
\begin{aligned}
\Delta L_{(l_1,\,l_2)}&= L_{(l_1,\,l_2)}-L_{(l_1,\,l_2-1)}-L_{(l_1-1,\,l_2)}+L_{(l_1-1,\,l_2-1)}\\
 &= \frac{1}{2\pi}\int_{|\gamma|<\frac{\pi}{2h}} -h\big(X_N^{l_1,\,l_2}- X_{N'}^{l_1,\,l_2-1}\big)\chi(h,\gamma) + 2h\big(X_N^{l_1-1,\,l_2}- X_{N'}^{l_1-1,\,l_2-1}\big)\chi\big(2h,\gamma\big)\,\mathrm{d}\gamma\\
&\quad - \frac{1}{2\pi}\int_{\frac{\pi}{2h}<|\gamma|<\frac{\pi}{h}}h\big(X_N^{l_1,\,l_2}- X_{N'}^{l_1,\,l_2-1}\big)\chi(h,\gamma)\,\mathrm{d}\gamma.
\end{aligned}
\end{equation}

We introduce $\Delta\widehat{L}_{(l_1,\,l_2)}(\gamma)$ as the integrand in \eqref{eq_1dOriginalDeltaL},
\begin{equation}\label{eq_DeltaL1}
\Delta\widehat{L}_{(l_1,\,l_2)}(\gamma) = \begin{cases}
-h\big(X_N^{l_1,\,l_2}- X_{N'}^{l_1,\,l_2-1}\big)\chi(h,\gamma) + 2h\big(X_N^{l_1-1,\,l_2}- X_{N'}^{l_1-1,\,l_2-1}\big)\chi(2h,\gamma),\quad\mbox{for $|\gamma|<\frac{\pi}{2h}$}\\
-h\big(X_N^{l_1,\,l_2}- X_{N'}^{l_1,\,l_2-1}\big)\chi(h,\gamma),\qquad\mbox{for $\frac{\pi}{2h}<|\gamma|<\frac{\pi}{h}$.}
\end{cases}
\end{equation}
Hence we can express $\Delta L_{(l_1,\,l_2)}$ as
\begin{equation}\label{eq_DeltaL}
\Delta L_{(l_1,\,l_2)} = \frac{1}{2\pi}\int_{|\gamma|<\frac{\pi}{h}}\Delta\widehat{L}_{(l_1,\,l_2)}(\gamma)\,\mathrm{d}\gamma.\\
\end{equation}
Moreover, after some computation we have for $|\gamma|<\frac{\pi}{2h}$,
\begin{equation}\label{eq_DeltaL0}
\begin{aligned}
\Delta\widehat{L}_{(l_1,\,l_2)}(\gamma) &= -2h\cdot \chi(2h,\gamma)\cdot\bigg(X_N^{l_1,\,l_2} - X_{N'}^{l_1,\,l_2-1} - X_N^{l_1-1,\,l_2} + X_{N'}^{l_1-1,\,l_2-1}\bigg)\\
&\quad -\big(h\cdot\chi(h,\gamma)-2h\cdot\chi(2h,\gamma)\big)\cdot\bigg(X_N^{l_1,\,l_2} - X_{N'}^{l_1,\,l_2-1}\bigg).
\end{aligned}
\end{equation}

Following the analysis in \cite{carter2007sharperror}, we compare the numerical solution to the analytical solution by splitting the domain into two wave number regions. Assume $p$ is a constant satisfying $0<p<\frac{1}{4}$. Then we define the low wave number region by
\begin{equation}\label{eq_1d_Omega_low}
\Omega_{\text{low}} = \big\{\gamma: \vert\gamma\vert\leq\min\{h^{-2p},\,k^{-p}\}\big\},
\end{equation}
and the high wave number region by
\begin{equation}\label{eq_1d_Omega_high}
\Omega_{\text{high}} = \big\{\gamma: \vert\gamma\vert>\min\{h^{-2p},\,k^{-p}\}\big\}\cap \left[-\frac{\pi}{h},\frac{\pi}{h}\right].
\end{equation}
Then we have the following lemmas.
\begin{lemma}[Low wave region]\label{lem_1dlowwave}
For $\Delta\widehat{L}_{(l_1,\,l_2)}$ introduced in \eqref{eq_DeltaL1}, there exists a constant~$C>0$, such that for any $l_1,l_2\in\mathbb{N}$,
\[
\int_{\Omega_{\text{low}}}\Delta \widehat{L}_{(l_1,\,l_2)}(\gamma)\,\mathrm{d}\gamma = h^2k\cdot R(T),
\]
where $R(T)$ is a random variable with bounded first and second moments satisfying
\[
\Big|\mathbb{E}\big[R(T)\big]\Big|\leq C,\quad \mathbb{E}\Big[\big|R(T)\big|^2\Big]\leq C.
\]

\end{lemma}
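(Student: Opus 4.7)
The strategy is to Taylor-expand the discrete amplification factor $X_N(\gamma)$ from \eqref{eq_1dCn1} against the analytical $X(T,\gamma)$ in the low wave number region and then exploit the mixed-difference structure of the integrand in \eqref{eq_DeltaL0} to isolate the leading $h^2 k$ cross term.

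First I would use the fact that on $\Omega_{\text{low}}$ one has $|\gamma h|\le h^{1-2p}\to 0$, so that the trigonometric coefficients in \eqref{eq_1d_a_ahat_c} admit the expansions $a = -\gamma^2/2 + O(\gamma^4 h^2)$, $\hat a = -\gamma^2/2 + O(\gamma^4 h^2)$ and $c = \gamma + O(\gamma^3 h^2)$. Substituting these into the per-step recursion \eqref{eq_1dCn1}, taking logarithms and summing over the $N = T/k$ steps (a standard exponential-martingale argument in the spirit of \cite{carter2007sharperror}), one obtains an expansion
\[
X_N(\gamma) = X(T,\gamma)\bigl( 1 + h^2\,\phi_h(T,\gamma) + k\,\phi_k(T,\gamma) + h^2 k\,\phi_{hk}(T,\gamma) \bigr) + R_{h,k}(T,\gamma),
\]
where $X(T,\gamma)$ is the analytic factor \eqref{eq_1dsolXn}, the functions $\phi_h,\phi_k,\phi_{hk}$ are $\mathcal{F}_T$-measurable random functions of $\gamma$ whose $L^2$-norms grow only polynomially in $\gamma$, and $R_{h,k}$ is $o(h^2)\cdot o(k)$ in $L^2$ uniformly on $\Omega_{\text{low}}$.

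Next I would apply this expansion to the two terms in \eqref{eq_DeltaL0}. For the first term, the mixed difference $X_N^{l_1,l_2} - X_{N'}^{l_1,l_2-1} - X_N^{l_1-1,l_2} + X_{N'}^{l_1-1,l_2-1}$ annihilates $X(T,\gamma)$, $h^2\phi_h$ and $k\phi_k$ (each depends on only one of the two discretisation parameters) and leaves a nonzero constant multiple of $h^2 k\,X(T,\gamma)\phi_{hk}(T,\gamma)$ arising from the doublings $h\mapsto 2h$ and $k\mapsto 4k$, plus a smaller remainder. For the second term, the factor $h\chi(h,\gamma) - 2h\chi(2h,\gamma)$ is the difference of two trapezoidal quadratures of $\int_0^\infty \mathrm{e}^{\mathrm{i}x\gamma}\,\mathrm{d}x$ against the smooth modulating $X_N$; by Euler--Maclaurin it contributes $O(h^2\gamma^2)$, while $X_N^{l_1,l_2} - X_{N'}^{l_1,l_2-1}$ contributes $O(k)$ by the expansion above, so their product is again $O(h^2 k)$ times a random function of $\gamma$ dominated in modulus by $|X(T,\gamma)|$ up to polynomial factors.

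Finally, integrating these pointwise bounds over $\Omega_{\text{low}}$ and invoking Cauchy--Schwarz to pass moments through the $\gamma$-integral, the claim follows from the Gaussian decay $|X(T,\gamma)| = \exp\bigl(-\tfrac{1}{2}(1-\rho)\gamma^2 T\bigr)$, which renders $\int \sqrt{\mathbb{E}[|\cdot|^2]}\,\mathrm{d}\gamma$ finite independently of $l_1,l_2$. The main obstacle is Step~1: rigorously compounding the per-step Taylor expansion of \eqref{eq_1dCn1} over $N = T/k$ steps and controlling the accumulated remainder uniformly in $(\gamma,h,k)$ on $\Omega_{\text{low}}\times(0,h_0]\times(0,k_0]$. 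This is where the mean-square stability of Theorem~\ref{thm_1dImplicitStability} enters decisively, together with the low-wave restriction $|\gamma h|\le h^{1-2p}$, which keeps each per-step relative error small enough that the compounded factor stays bounded in $L^2$.
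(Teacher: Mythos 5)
Your overall route is the same as the paper's: write $X_{n+1}=C_nX_n$ with $C_n$ from \eqref{eq_1dCn1}, take logarithms, compound the per-step error over $N=T/k$ steps, use the mixed-difference structure of \eqref{eq_DeltaL0} to extract the $h^2k$ cross term, and integrate against the Gaussian decay of $X(T,\gamma)$. Your treatment of the quadrature factor $h\chi(h,\gamma)-2h\chi(2h,\gamma)$ as an $O(h^2)$ perturbation multiplying the $O(k)$ time difference also matches the paper.

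However, there is a genuine gap at the centre of your Step 1: you assert that the compounded expansion has the tensor-product form $X_N=X(T)\bigl(1+h^2\phi_h+k\phi_k+h^2k\phi_{hk}\bigr)+R_{h,k}$ with $\phi_h$ depending only on the spatial discretisation and $\phi_k$ only on the temporal one, so that the mixed difference annihilates everything except $h^2k\,\phi_{hk}$. This is precisely the statement that needs proof, and it does not follow from generic Taylor expansion: it hinges on two scheme-specific facts you never verify. First, the expansion of $\sum_n\log C_n$ contains the term $(a+\tfrac12 c^2)\rho k\sum_{n}(Z_n^2-1)$; with $a$, $\hat a$, $c$ as in \eqref{eq_1d_a_ahat_c} one has the exact identity $a+\tfrac12 c^2=0$ for scheme \eqref{eq_1dimplicitdiscrete}, so this term vanishes. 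Were it not zero, its coefficient would be $O(\gamma^4h^2)$ while $k\sum_n(Z_n^2-1)$ has $L^2$ norm of order $\sqrt{k}$, producing a cross term of size $h^2\sqrt{k}$ that is neither an $h^2\phi_h$ with $\phi_h$ independent of $k$ nor absorbable into $o(h^2)\,o(k)$; this is exactly what happens for the alternative scheme \eqref{eq_1dimplicitdiscrete_2}, where $\hat a+\tfrac12 c^2\neq0$ and the second moment degrades to $O(h^4k)$ (Section \ref{sec_1dalternative}). Since your claimed expansion would thus be false for a scheme differing from \eqref{eq_1dimplicitdiscrete} only in the discretisation of the Milstein correction, the argument as written cannot be complete. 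Second, the term $\mathrm{i}(\gamma-c)\sqrt{\rho k}\sum_n Z_n$ is $O(h^2)$ in $L^2$ and is independent of the time level only because of the explicit coupling $\sum_{n=0}^{N-1}Z_n=2\sum_{n=0}^{N'-1}\widetilde Z_n$ of the Brownian increments across levels; this coupling must be stated and used. A minor further point: in the low wave region the compounded remainder is controlled by $\gamma^2k\le k^{1-2p}$ and $\gamma h\le h^{1-2p}$ rather than by the mean-square stability of Theorem \ref{thm_1dImplicitStability}, which the paper needs only for the damping in the high wave region.
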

\begin{proof}
See Section \ref{sec_1dlowwaveregion}.
\end{proof}

\begin{lemma}[High wave region]\label{lem_1dhighwave}
There exists a real number $C>0$, such that for any $l_1^\ast\in\mathbb{N}$, any $h_0>0$, and any $k_0>0$ such that
\[
k_0\leq \frac{T\log_2(\theta^{-1})}{C_0+(3+\beta)(l_1^\ast + \log_2h_0^{-1})}\,,
\]
where $0<\theta<1$, $\beta>0$, $C_0>0$ are the same constants as in Remark \ref{rmk_1dImplicitConvergence}, the following holds for $h_0\,2^{-l_1^\ast}\leq h\leq h_0$:
\begin{align*}
\int_{\Omega_{\mathrm{high}}}\Delta \widehat{L}_{(l_1,\,l_2)}(\gamma)\,\mathrm{d}\gamma = h^{2+\frac{1}{2}\beta}\,k^r\cdot\widetilde{R}(T),\quad \forall\ r>0,
\end{align*}
where $\widetilde{R}(T)$ is a random variable with bounded first and second moments satisfying
\[
\Big|\mathbb{E}\big[\widetilde{R}(T)\big]\Big|\leq C,\quad \mathbb{E}\Big[\big|\widetilde{R}(T)\big|^2\Big]\leq C.
\]
\end{lemma}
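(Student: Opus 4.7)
\textbf{Proof plan for Lemma \ref{lem_1dhighwave}.}

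The strategy is to exploit the strong damping of the implicit scheme on the high wave region: there the one-step amplification factor $r$ of \eqref{eq_1dCn1} satisfies $\mathbb{E}[|r|^2]\le\theta<1$ for the specific $\theta$ of Remark~\ref{rmk_1dImplicitConvergence}, so $\mathbb{E}[|X_N|^2]$ decays like $\theta^{N}=\theta^{T/k}$, which under the stated constraint on $k_0$ is at most $2^{-C_0}h^{3+\beta}$ and is additionally super-polynomially small in $k$. My target is to show both $\bigl|\mathbb{E}\int_{\Omega_{\mathrm{high}}}\Delta\widehat{L}\,d\gamma\bigr|\lesssim h^{2+\beta/2}k^{r}$ and $\mathbb{E}\bigl|\int_{\Omega_{\mathrm{high}}}\Delta\widehat{L}\,d\gamma\bigr|^{2}\lesssim h^{4+\beta}k^{2r}$, from which the stated $\widetilde{R}(T)$ is defined as the ratio.

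First, I split $\Omega_{\mathrm{high}}$ into the two subregions $|\gamma|\le\pi/(2h)$ and $\pi/(2h)<|\gamma|<\pi/h$ and use the two cases of \eqref{eq_DeltaL1}. On each piece $\Delta\widehat{L}_{(l_1,l_2)}(\gamma)$ is a linear combination of terms $h\chi(h,\gamma)\bigl(X_N^{l_1,l_2}-X_{N'}^{l_1,l_2-1}\bigr)$ and $2h\chi(2h,\gamma)\bigl(X_N^{l_1-1,l_2}-X_{N'}^{l_1-1,l_2-1}\bigr)$. Applying the Cauchy--Schwarz inequality in $\gamma$ separates the deterministic factor $h\chi$ from the stochastic factor and reduces matters to bounding $\|h\chi(h,\cdot)\|_{L^{2}(\Omega_{\mathrm{high}})}$ and $\|X_{N}^{l_1,l_2}-X_{N'}^{l_1,l_2-1}\|_{L^{2}(\Omega_{\mathrm{high}})}$ in mean square.

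Second, I compute the deterministic factor. The Abel-summed representation $\chi(h,\gamma)=e^{-i(j_{0}-1)h\gamma}/(1-e^{ih\gamma})+\tfrac{1}{2}e^{-ij_{0}h\gamma}$, valid pointwise for $\gamma\ne 0$, gives $|h\chi(h,\gamma)|\lesssim 1/|\gamma|$ on $\Omega_{\mathrm{high}}$. Integrating $1/\gamma^{2}$ from $\min\{h^{-2p},k^{-p}\}$ up to $\pi/h$ produces an $L^{2}$-bound that is algebraically small in $h$; this supplies the residual half-power needed to push the exponent from $(3+\beta)/2$ up to the claimed $2+\beta/2$.

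Third, I establish the uniform mean-square contraction on $\Omega_{\mathrm{high}}$. By independence of $\{Z_{n}\}$ across timesteps, $\mathbb{E}[|X_{N}|^{2}]=(\mathbb{E}[|r|^{2}])^{N}$ with $\mathbb{E}[|r|^{2}]=(1+2a^{2}\rho^{2}k^{2}+c^{2}\rho k)/(1-\hat ak)^{2}$. A case split matching the two branches of Proposition~\ref{prop_1dImplicitConvergence} (namely $h^{2}\ge k$ versus $h^{2}<k$), together with $\rho\le 1/\sqrt{2}$, shows that this ratio is bounded by the specific $\theta<1$ of Remark~\ref{rmk_1dImplicitConvergence} uniformly over $\Omega_{\mathrm{high}}$; this is a high-wave refinement of the stability analysis of \cite{ref1} and is essentially the same quantity that produces the term $\theta^{N}h^{-1}$ in \eqref{eq_beginningoftheta}. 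Hence $\mathbb{E}[|X_{N}^{l_1,l_2}|^{2}]\le\theta^{N}$, and likewise for $X_{N'}^{l_1,l_2-1}$.

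Fourth, I convert $\theta^{T/k}$ into the advertised product $h^{2+\beta/2}k^{r}$. The hypothesis on $k_{0}$ is precisely \eqref{eq_1d_k/hcondition_2}, which by Remark~\ref{rmk_1dImplicitConvergence} is equivalent to $\theta^{T/k}\le 2^{-C_{0}}h^{3+\beta}$ for every admissible $(l_1,l_2)$. Because $\theta<1$, the map $k\mapsto\theta^{T/k}$ is also super-polynomially small in $k$: for every $r>0$ there is $C_{r}$ with $\theta^{T/k}\le C_{r}k^{2r}$ on $(0,k_{0}]$. Writing $\theta^{N/2}=\theta^{N/4}\cdot\theta^{N/4}$ and assigning one factor the bound $(2^{-C_{0}}h^{3+\beta})^{1/2}$ and the other $(C_{r}k^{2r})^{1/2}$, combined with the $L^{2}(\Omega_{\mathrm{high}})$-norm of $h\chi$ from Step~2 and Jensen's inequality, gives the required first-moment estimate; doing the same with Cauchy--Schwarz but without the Jensen step gives the second-moment estimate.

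The main obstacle is Step~3: verifying that the worst-case value of $\mathbb{E}[|r|^{2}]$ over $\Omega_{\mathrm{high}}$ agrees with the specific $\theta$ of Remark~\ref{rmk_1dImplicitConvergence}, uniformly in $(h,k)$. The region $\Omega_{\mathrm{high}}$ spans wave numbers from just above $\min\{h^{-2p},k^{-p}\}$, where $\hat ak$ is moderate and damping is weakest, up to the Nyquist frequency $\pi/h$, where $(1-\hat ak)^{-2}$ gives the strongest contraction; one must check that the supremum does not drift to $1$ in the weak-damping regime, which is where the constraint $\rho\le 1/\sqrt 2$ and the careful choice of $p<1/4$ in \eqref{eq_1d_Omega_high} enter. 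A secondary technical point is justifying the Abel-summed representation of $\chi$ inside an $L^{2}$ norm; this is handled by truncating the defining series, bounding uniformly, and passing to the limit.
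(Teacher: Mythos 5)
Your overall architecture --- the explicit one-step mean-square amplification factor, damping of $X_N$ over $\Omega_{\mathrm{high}}$, Cauchy--Schwarz to decouple the deterministic factor $h\chi$, and conversion of $\theta^{T/k}$ into powers of $h$ and $k$ via the hypothesis on $k_0$ --- is the same as the paper's. However, the load-bearing claim in your Step~3 is false: there is no uniform bound $\mathbb{E}[|r|^2]\le\theta<1$ over all of $\Omega_{\mathrm{high}}$. Near the inner edge $|\gamma|=\gamma_{\min}\coloneqq\min\{h^{-2p},k^{-p}\}$ one has $\gamma^2k\le\max\{h^{2-4p},k^{1-2p}\}\rightarrow 0$ (recall $p<1/4$), so the one-step factor $f(\gamma)=\bigl(1+\rho\gamma^2ku\cos^2\tfrac{h\gamma}{2}+\tfrac{\rho^2}{2}\gamma^4k^2u^2\cos^4\tfrac{h\gamma}{2}\bigr)\big/\bigl(1+\gamma^2ku+\tfrac14\gamma^4k^2u^2\bigr)=1-(1-\rho)\gamma^2ku+O(\gamma^4k^2u^2)$ tends to $1$; the supremum \emph{does} drift to $1$ in exactly the weak-damping regime you flag as ``the main obstacle'', and neither $\rho\le 1/\sqrt2$ nor the choice of $p$ prevents this. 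The paper (Appendix~\ref{appendix_proof_prop_meansquareconvergence}) resolves this not by a uniform one-step contraction but by splitting $\Omega_{\mathrm{high}}$ again at $|\gamma|=k^{-1/2}$: on $\{\gamma_{\min}<|\gamma|<k^{-1/2}\}$ the $N$-fold product still decays because $f^N(\gamma)\le\exp\bigl(-(1-\rho)uT\gamma^2(1+o(1))\bigr)$ with $\gamma^2\ge\gamma_{\min}^2\rightarrow\infty$, which is $o(k^r)$ for every $r>0$ even though $f(\gamma)\rightarrow1$ pointwise; only on $\{k^{-1/2}<|\gamma|<\pi/h\}$ does a genuine uniform bound $f\le\theta_0<1$ hold, obtained from a monotonicity analysis of a majorant $g\ge f$ in $d=\sin^2(h\gamma/2)$ and $\lambda=k/h^2$, with the cases $0<\rho\le1/2$ and $1/2<\rho\le1/\sqrt2$ treated separately because $g$ acquires an interior critical point in the latter case. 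Note also that the $\theta$ of Remark~\ref{rmk_1dImplicitConvergence} is defined through the integral identity \eqref{eq_1d_thetacalculation}, not as a supremum of the one-step factor, so the object you propose to identify it with is not the right one.

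A secondary problem is the exponent bookkeeping in Step~4. Splitting $\theta^{N}$ into two equal square roots and charging only $(2^{-C_0}h^{3+\beta})^{1/2}=O(h^{(3+\beta)/2})$ to the $h$-budget, then paying $h^{-1}$ for the length of $\Omega_{\mathrm{high}}$ (or $\gamma_{\min}^{-1}\le 1$ after your $\|h\chi\|^2_{L^2(\Omega_{\mathrm{high}})}\lesssim\gamma_{\min}^{-1}$ refinement, which is at best $O(h^{1/2})$ since $p<1/4$), leaves a second-moment bound of order $h^{(1+\beta)/2}k^{2r}$, far short of the $h^{4+\beta}k^{2r}$ that boundedness of $\mathbb{E}\bigl[|\widetilde R(T)|^2\bigr]$ requires. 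The paper instead spends the entire factor $\theta^{T/k_0}\le 2^{-C_0}h^{3+\beta}$ on the $h$-budget and extracts the $k^r$ factor from the leftover $\theta^{T/k-T/k_0}$, which is superpolynomially small in $k$ once $k\le k_0/2$. You would need to restructure Step~4 accordingly, and even then you should check that the resulting power of $h$ in the second moment is what the statement of the lemma demands.
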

\begin{proof}
See Section \ref{sec_1dhighwave}.
\end{proof}

Combining Lemma \ref{lem_1dlowwave} and Lemma \ref{lem_1dhighwave}, we get
\[
\Delta L_{(l_1,\,l_2)} =\frac{1}{2\pi}\int_{|\gamma|<\frac{\pi}{h}}\Delta\widehat{L}_{(l_1,\,l_2)}(\gamma)\,\mathrm{d}\gamma = h^2k\cdot R(T) + h^{2+\frac{1}{2}\beta}\,k^r\cdot\widetilde{R}(T).
\]

Hence there exists a constant $C>0$ independent of $(l_1,l_2)$, such that the first and second moments of $\Delta L_{(l_1,\,l_2)}$ satisfy
\[
\big|\mathbb{E}\left[\Delta L_{(l_1,\,l_2)}\right]\big| \leq C\, h^2k,\qquad
\mathbb{E}\big[\left|\Delta L_{(l_1,\,l_2)}\right|^2\big] \leq C\, h^4k^2.
\]

This concludes the proof of Theorem~\ref{thm_1dmainresult}. We will prove Lemma \ref{lem_1dlowwave} and Lemma \ref{lem_1dhighwave} in the remainder of this section.

\subsection{Low wave number region (proof of Lemma \ref{lem_1dlowwave})}\label{sec_1dlowwaveregion}
First consider the case where $\gamma$ is small, such that 
$\gamma\in\Omega_{\text{low}}$. To analyse the mean and the variance of $\Delta\widehat{L}_{(l_1,\,l_2)}(\gamma)$ in \eqref{eq_DeltaL0}, it suffices to analyse the mean and the variance of
\begin{align*}
X_N^{l_1,\,l_2} - X_{N'}^{l_1,\,l_2-1} - X_N^{l_1-1,\,l_2} + X_{N'}^{l_1-1,\,l_2-1}\qquad\text{and}\qquad X_N^{l_1,\,l_2} - X_{N'}^{l_1,\,l_2-1}.
\end{align*}

From \eqref{eq_1dsolXn} it follows that the exact solution of $X(t_{n+1})$ given $X(t_n)$ is
\begin{equation}\label{eq_exactsol_leveln}
X(t_{n+1}) = X(t_n)\exp\Big(-\frac{1}{2}(1-\rho)\gamma^2k-\mathrm{i}\gamma\sqrt{\rho }W_n\Big),
\end{equation}
where $M_{t_{n+1}} - M_{t_n} \coloneqq W_n$ is the Brownian increment.

Now we consider the numerical approximation of \eqref{eq_exactsol_leveln}, written as
$$X_{n+1}^{l_1,\,l_2} = C_n^{l_1,\,l_2}\,X_n^{l_1,\,l_2},$$
where
\begin{equation}\label{eq_1dCn2}
C_n^{l_1,\,l_2} \coloneqq \exp\bigg(-\frac{1}{2}(1-\rho)\gamma^2k-\mathrm{i}\gamma\sqrt{\rho }W_n + e_n^{l_1,l_2}\bigg),
\end{equation}
and $e_n^{l_1,\,l_2}$ is the logarithmic error between the numerical solution and the exact solution introduced during $[nk,(n+1)k]$.
Aggregating over $N$ timesteps, at $t_N = kN = T$, 
$$X_N^{l_1,l_2} = X(T)\exp\bigg(\sum_{n=0}^{N-1}e_n^{l_1,l_2}\bigg),$$
where $X(T) = \exp(-\frac{1}{2}(1-\rho)\gamma^2T-\mathrm{i}\gamma\sqrt{\rho}\,W_T)$ is the exact solution at time $T$. Therefore
\begin{equation}\label{eq_1dX1-X2}
\begin{aligned}
&\ X_N^{l_1,\,l_2}-X_{N'}^{l_1,\,l_2-1}\\
=&\ X(T)\bigg[\exp\Big(\sum_{n=0}^{N-1}e_n^{l_1,l_2}\Big) - \exp\Big(\sum_{n=0}^{N'-1}e_n^{l_1,l_2-1}\Big)\bigg],\\[5pt]
&\ X_N^{l_1,\,l_2} - X_{N'}^{l_1,\,l_2-1} - X_N^{l_1-1,\,l_2} + X_{N'}^{l_1-1,\,l_2-1}\\
=&\ X(T)\bigg[\exp\Big(\sum_{n=0}^{N-1}e_n^{l_1,l_2}\Big)- \exp\Big(\sum_{n=0}^{N'-1}e_n^{l_1,l_2-1}\Big) - \exp\Big(\sum_{n=0}^{N-1}e_n^{l_1-1,l_2}\Big) + \exp\Big(\sum_{n=0}^{N'-1}e_n^{l_1-1,l_2-1}\Big)\bigg].\\
\end{aligned}
\end{equation}

From (\ref{eq_1dCn2}), we have
\begin{equation}
\sum_{n=0}^{N-1}e_n^{l_1,\,l_2} = \sum_{n=0}^{N-1}\log C_n^{l_1,\,l_2} +\frac{1}{2}(1-\rho)\gamma^2T + \mathrm{i}\gamma\sqrt{\rho}\,W_T,
\end{equation}
in which $C_n^{l_1,\,l_2}$ has the same form as in (\ref{eq_1dCn1}), 
$$C_n^{l_1,\,l_2} =\frac{1-\mathrm{i}c\sqrt{\rho k}Z_n+a\rho k (Z_n^2-1)}{1-\hat{a}k},$$
where $a$, $\hat{a}$ and $c$ are as in \eqref{eq_1d_a_ahat_c}.

In the following, for a small parameter $k$, $\eta_1(k) = o(k)$, $\eta_2(k) = O(k)$, $\eta_3(k) = o(1)$, $\eta_4(k) = O(1)$ denote, respectively,
\begin{align*}
\limsup\limits_{k\rightarrow 0}\mathbb{E}\bigg[\bigg(\frac{\eta_1(k)}{k}\bigg)^2\bigg] &= 0,\qquad \limsup\limits_{k\rightarrow 0}\mathbb{E}\bigg[\bigg(\frac{\eta_2(k)}{k}\bigg)^2\bigg]<\infty,\\
\limsup\limits_{k\rightarrow 0}\mathbb{E}\Big[\big( \eta_3(k)\big)^2\Big] &= 0,\qquad \limsup\limits_{k\rightarrow 0}\mathbb{E}\Big[\big( \eta_4(k)\big)^2\Big]<\infty.
\end{align*}

Then one can derive from Taylor expansion of $\log C_n^{l_1,\,l_2}$,
\begin{align*}
\sum_{n=0}^{N-1} e_n^{l_1,\,l_2} =&\ \mathrm{i}(\gamma-c)\sqrt{\rho k}\sum_{n=0}^{N-1}Z_n + \big(a+\frac{1}{2}c^2\big)\rho k\sum_{n=0}^{N-1}Z_n^2 + \big(a+\frac{1}{2}\gamma^2\big)(1-\rho)T + (\hat{a}-a)T\\
& + \mathrm{i}\big(ac+\frac{1}{3}c^3\big)\rho k\sqrt{\rho k}\sum_{n=0}^{N-1}Z_n^3 - \mathrm{i}ac\rho k\sqrt{\rho k}\sum_{n=0}^{N-1}Z_n  - \big(\frac{1}{2}a^2+ac^2+\frac{1}{4}c^4\big)\rho^2k^2\sum_{n=0}^{N-1}Z_n^4\\
& + (a^2+ac^2)\rho^2k^2\sum_{n=0}^{N-1}Z_n^2 + \frac{1}{2}(\hat{a}^2-a^2\rho^2)kT + \gamma^4 o(k),
\end{align*}
where we have 
\begin{align*}
\mathbb{E}\left[\left(\sum_{n=0}^{N-1} Z_n^{2m}\right)^2\right] \leq \frac{(4m-1)!!\,T^2}{k^2},\quad \mathbb{E}\left[\left(\sum_{n=0}^{N-1} Z_n^{2m-1}\right)^2\right] =\frac{(4m-3)!!T}{k}.
\end{align*}
Moreover, this discretisation scheme satisfies
$$ a+\frac{1}{2}c^2 = 0. $$
This is important since the term $(a+\frac{1}{2}c^2)\rho k\sum Z_n^2$ vanishes in this case, and it follows that
\begin{align*}
&\exp\bigg(\sum_{n=0}^{N-1} e_n^{l_1,\,l_2}\bigg) 
= \exp\bigg(\Big(\hat{a}-a+(a+\frac{1}{2}\gamma^2)(1-\rho)\Big)T\bigg)\cdot\bigg[1+\mathrm{i}(\gamma-c)\sqrt{k}\sum_{n=0}^{N-1}Z_n+ \frac{1}{2}(\hat{a}^2\\
&\ -a^2\rho^2)kT- \frac{1}{2}(\gamma-c)^2\rho k\bigg(\sum_{n=0}^{N-1}Z_n\bigg)^2+ \mathrm{i}\gamma^3k\sqrt{\rho k}\sum_{n=0}^{N-1}\phi_1(Z_n) + \gamma^4k^2\sum_{n=0}^{N-1}\phi_2(Z_n)+\gamma^4k\cdot r_0(k)\bigg],
\end{align*}
where $\phi_1(\cdot)$ is an odd degree polynomial function, $\phi_2(\cdot)$ is an even degree polynomial function, and $r_0(k)$ is a random variable with bounded second moments such that
\[
\lim\limits_{l_2\rightarrow \infty}\mathbb{E}\big[|r_0(k)|^2\big]\rightarrow 0.
\]

Note that the level $(l_1,\,l_2-1)$ has mesh size $h$ and timestep $4k$, such that it has $N'=N/4$ time steps. To reduce the variance, we use the same Wiener process in these two levels, hence the Brownian increment $\widetilde{Z}_n$ on level $(l_1,\,l_2-1)$ satisfies
\begin{align*}
\widetilde{Z}_n &= \frac{1}{2}(Z_{4n-3}+Z_{4n-2}+Z_{4n-1}+Z_{4n}),\quad\text{thus }\ \sum_{n=0}^{N-1}Z_n = 2\sum_{n=0}^{N'-1}\widetilde{Z}_n.
\end{align*}

As a result, we have
\begin{equation}\label{eq_1d2ExpDifference}
\begin{aligned}
&\exp\bigg(\sum_{n=0}^{N-1} e_n^{l_1,\,l_2}\bigg) - \exp\bigg(\sum_{n=0}^{N'-1} e_n^{l_1,\,l_2-1}\bigg) \\
=&\ \exp\bigg(\Big(\hat{a}-a+(a+\frac{1}{2}\gamma^2)(1-\rho)\Big)T\bigg)\cdot\bigg[ -\frac{3}{2}(\hat{a}^2-a^2\rho^2)kT +\mathrm{i}\gamma^3 k\sqrt{k}\bigg(\sum_{n=0}^{N-1}\phi_1(Z_n)\quad\\
&\quad - \sum_{n=0}^{N'-1}\phi_1(\widetilde{Z}_n)\bigg)  + \gamma^4k^2\bigg(\sum_{n=0}^{N-1}\phi_2(Z_n) - \sum_{n=0}^{N'-1}\phi_2(\widetilde{Z}_n)\bigg) + \gamma^4k\cdot r_1(k)\bigg]\\
:=&\ k\cdot E_1(T,\gamma),
\end{aligned}
\end{equation}
where 
\[
\lim\limits_{l_2\rightarrow \infty}\mathbb{E}\big[|r_1(k)|^2\big]\rightarrow 0,
\]
\begin{align*}
E_1(T,\gamma) &= \exp\bigg(\Big(\hat{a}-a+(a+\frac{1}{2}\gamma^2)(1-\rho)\Big)T\bigg)\cdot\bigg[ -\frac{3}{2}(\hat{a}^2-a^2\rho^2)T +\mathrm{i}\gamma^3 \sqrt{k}\bigg(\sum_{n=0}^{N-1}\phi_1(Z_n)\quad\\
&\quad - \sum_{n=0}^{N'-1}\phi_1(\widetilde{Z}_n)\bigg)  + \gamma^4k\bigg(\sum_{n=0}^{N-1}\phi_2(Z_n) - \sum_{n=0}^{N'-1}\phi_2(\widetilde{Z}_n)\bigg) + \gamma^4\,r_1(k)\bigg].
\end{align*}

Since
\begin{align*}
a &= -\frac{\gamma^2}{2}\cdot\frac{\sin^2 h\gamma}{h^2\gamma^2} = -\frac{\gamma^2}{2} + \frac{\gamma^4}{6}h^2 + \gamma^6O(h^4),\\
\hat{a} &= -\frac{\gamma^2}{2}\cdot\frac{\sin^2 (h\gamma/2)}{(h\gamma/2)^2} = -\frac{\gamma^2}{2} + \frac{\gamma^4}{24}h^2 + \gamma^6O(h^4),\\
c &= \gamma\cdot\frac{\sin h\gamma}{h\gamma} = \gamma - \frac{\gamma^3}{6}h^2 + \gamma^5O(h^4),
\end{align*}
it follows that
$$ \exp\left(\Big(\hat{a}-a+(a+\frac{1}{2}\gamma^2)(1-\rho)\Big)T\right)= 1+\frac{1-4\rho}{24}T\cdot \gamma^4h^2 + \gamma^6O(h^4).$$
Therefore we derive
\begin{equation}\label{eq_1d4ExpDifference}
\begin{aligned}
&\exp\bigg(\sum_{n=0}^{N-1} e_n^{l_1,\,l_2}\bigg) - \exp\bigg(\sum_{n=0}^{N'-1} e_n^{l_1,\,l_2-1}\bigg) - \exp\bigg(\sum_{n=0}^{N-1} e_n^{l_1-1,\,l_2}\bigg) + \exp\bigg(\sum_{n=0}^{N'-1} e_n^{l_1-1,\,l_2-1}\bigg) \\[3pt]
=&\ \mathrm{i} \gamma^7k\sqrt{k}\bigg(\sum_{n=0}^{N-1}\phi_1(Z_n) - \sum_{n=0}^{N'-1}\phi_1(\widetilde{Z}_n)\bigg)\cdot O(h^2) + \gamma^8 k^2\bigg(\sum_{n=0}^{N-1}\phi_2(Z_n)\\
&\quad - \sum_{n=0}^{N'-1}\phi_2(\widetilde{Z}_n)\bigg)\cdot O(h^2) + \gamma^8 h^2k\frac{(1-\rho^2)(4\rho-1)T^2}{64} + \gamma^8h^2k\cdot r_2(k)\\
\coloneqq&\ h^2k\cdot E_2(T,\gamma),
\end{aligned}
\end{equation}
where
\[
\lim\limits_{l_2\rightarrow \infty}\mathbb{E}\big[|r_2(k)|^2\big]\rightarrow 0,
\]
\begin{align*}
E_2(T,\gamma) &=  \mathrm{i} \gamma^7\sqrt{k}\bigg(\sum_{n=0}^{N-1}\phi_1(Z_n) - \sum_{n=0}^{N'-1}\phi_1(\widetilde{Z}_n)\bigg)\cdot O(1) + \gamma^8 k\bigg(\sum_{n=0}^{N-1}\phi_2(Z_n)\\
&\quad - \sum_{n=0}^{N'-1}\phi_2(\widetilde{Z}_n)\bigg)\cdot O(1) + \gamma^8 \frac{(1-\rho^2)(4\rho-1)T^2}{64} + \gamma^8\cdot r_2(k). 
\end{align*}
So we have
\begin{align*}
X_N^{l_1,\,l_2} - X_{N'}^{l_1,\,l_2-1} &= k\cdot X(T)\cdot E_1(T,\gamma),\\
X_N^{l_1,\,l_2} - X_{N'}^{l_1,\,l_2-1} - X_N^{l_1-1,\,l_2} + X_{N'}^{l_1-1,\,l_2-1} &= h^2k\cdot X(T)\cdot E_2(T,\gamma).
\end{align*}

Here, for $i=1,2$,
\[
\int_{\Omega_{\text{low}}} h\,\chi(h,\gamma)\cdot X(T)\,E_i(T,\gamma)\,\mathrm{d}\gamma
\]
is the numerical approximation to 
\[
\int_0^\infty\int_{\Omega_\text{low}}\mathrm{e}^{\mathrm{i}\gamma(x-x_0)}\cdot X(T)\, E_i(T,\gamma)\,\mathrm{d}\gamma \,\mathrm{d}x.
\]
Because of the exponential decay of $X(T)$ in $\gamma$, there exists $C_0$ independent of $(l_1,l_2)$, such that
\[
\mathbb{E}\Bigg|\int_{\Omega_{\text{low}}} h\cdot\chi(h,\gamma)\Big(X(T)\cdot E_i(T,\gamma)\Big)\,\mathrm{d}\gamma - \int_0^\infty\int_{\Omega_\text{low}}\mathrm{e}^{\mathrm{i}\gamma(x-x_0)}X(T)\cdot E_i(T,\gamma)\,\mathrm{d}\gamma \,\mathrm{d}x \Bigg|\leq C_0\,h^2.
\]
Then there exists a constant $C_1>0$ independent of $l_1$ and $l_2$, such that
\begin{align*}
& \int_{\Omega_{\text{low}}} \big(h\cdot\chi(h,\gamma)-2h\cdot\chi(2h,\gamma)\big)\cdot \big( X_N^{l_1,\,l_2} - X_{N'}^{l_1,\,l_2-1} \big)\,\mathrm{d}\gamma\\
=\ & h^2k \int_{\Omega_{\text{low}}} \big(h\cdot\chi(h,\gamma)-2h\cdot\chi(2h,\gamma)\big)\cdot X(T)\,E_1(T,\gamma)\,\mathrm{d}\gamma \coloneqq h^2k\cdot R_1(T),\\[5pt]
& \int_{\Omega_{\text{low}}} 2h\,\chi(2h,\gamma)\cdot\big(X_N^{l_1,\,l_2} - X_{N'}^{l_1,\,l_2-1} - X_N^{l_1-1,\,l_2} + X_{N'}^{l_1-1,\,l_2-1}\big)\,\mathrm{d}\gamma\\
=\ & h^2k \int_{\Omega_{\text{low}}} 2h\,\chi(2h,\gamma)\cdot X(T)\,E_2(T,\gamma)\,\mathrm{d}\gamma \coloneqq h^2k\cdot R_2(T),
\end{align*}
where $R_1(T),\,R_2(T)$ are random variables with bounded moments satisfying
\[
\Big|\mathbb{E}\big[R_i(T)\big]\Big|\leq C_1,\quad \mathbb{E}\Big[\big|R_i(T)\big|^2\Big]\leq C_1,\qquad i=1,2.
\]
Then Lemma~\ref{lem_1dlowwave} follows by letting
\[
R(T) = -\big(R_1(T) + R_2(T)\big),\quad C = 4C_1.
\]

\subsection{High wave number region (proof of Lemma \ref{lem_1dhighwave})}\label{sec_1dhighwave}
Now we consider the case where $\gamma$ is large, such that $\gamma\in\Omega_{\text{high}}$. By \eqref{eq_1dCn1}, we have 
\begin{align*}
X_N &= X_0\prod_{n=0}^{N-1}\frac{1-\mathrm{i}\sqrt{k}cZ_n+k\rho a(Z_n^2-1)}{1-k\hat{a}}\\
&= X_0\prod_{n=0}^{N-1}\bigg(1-\mathrm{i}\gamma\sqrt{\rho k}Z_n\frac{\sin h\gamma}{h\gamma}-\frac{\rho}{2}\gamma^2k(Z_n^2-1)\frac{\sin^2 h\gamma}{(h\gamma)^2}\bigg)\bigg(1+\frac{1}{2}\gamma^2k\frac{\sin^2 (h\gamma/2)}{(h\gamma/2)^2}\bigg)^{-1}\\
&= X_0\prod_{n=0}^{N-1}\bigg(1-\mathrm{i}\gamma\sqrt{\rho k} Z_n\frac{\sin h\gamma }{h\gamma}-\frac{\rho}{2}\gamma^2ku(Z_n^2-1)\cos^2\frac{h\gamma}{2}\bigg)\bigg(1+\frac{1}{2}\gamma^2ku\bigg)^{-1},
\end{align*}
denoting
$$u = \frac{\sin^2\frac{h\gamma}{2}}{(\frac{h\gamma}{2})^2}\Longrightarrow \frac{\sin^2h\gamma}{(h\gamma)^2}=\cos^2\frac{h\gamma}{2}\cdot u.$$
Note that $u\in [4/\pi^2,1]$ when $|h\gamma|<\pi$. We have
\begin{equation}
\lim_{N\rightarrow\infty}\mathbb{E}\big[X_N\big] = \lim_{N\rightarrow\infty}X_0\bigg(1+\frac{1}{2}\gamma^2ku\bigg)^{-N} = X_0\cdot \exp\bigg(-\frac{1}{2}\gamma^2uT\bigg),
\end{equation}
and
\begin{align*}
\lim_{N\rightarrow\infty}\mathbb{E}\big[|X_N|^2\big] &= \lim_{N\rightarrow\infty}X_0^2\bigg(\frac{1+\rho\gamma^2ku\cos^2\frac{h\gamma}{2}+\frac{\rho^2}{2}\gamma^4k^2u^2\cos^4\frac{h\gamma}{2}}{1+\gamma^2ku+\frac{1}{4}\gamma^4k^2u^2}\bigg)^N,\\
&= \lim_{N\rightarrow\infty}X_0^2\bigg(1-\gamma^2ku\frac{1-\rho\cos^2\frac{h\gamma}{2}+\frac{1}{4}(1-2\rho^2\cos^4\frac{h\gamma}{2})\gamma^2ku}{1+\gamma^2ku + \frac{1}{4}\gamma^4k^2u^2}\bigg)^N\\
&= X_0^2\cdot\exp\bigg(-\gamma^2uT\frac{1-\rho\cos^2\frac{h\gamma}{2}+\frac{1}{4}(1-2\rho^2\cos^4\frac{h\gamma}{2})\gamma^2ku}{1+\gamma^2ku + \frac{1}{4}\gamma^4k^2u^2}\bigg).
\end{align*}

Since we have assumed that $2\rho^2\leq1$, the numerator~$1-\rho\cos^2\frac{h\gamma}{2}+\frac{1}{4}(1-2\rho^2\cos^4\frac{h\gamma}{2})\gamma^2ku$ is positive.

When $h^2\geq k$, the high wave region is
\[
h^{-2p} < |\gamma| < \pi h^{-1}.
\]

In this case, 
\[
\mathbb{E}\bigg[\,\bigg|\int_{\Omega_{\mathrm{high}}}\Delta \widehat{L}_{(l_1,\,l_2)}(\gamma)\,\mathrm{d}\gamma \bigg|^2\,\bigg]= o(h^r),\qquad \forall r>0.
\]

When $h^2<k$, following the proof of Proposition \ref{prop_1dImplicitConvergence} in Appendix \ref{appendix_proof_prop_meansquareconvergence}, we have
\begin{equation}\label{eq_1d_highintergral}
\int_{\Omega_{\text{high}}} \mathbb{E}\left|X_N(\gamma)\right|^2\,\mathrm{d}\gamma \leq \widetilde{C}\,\theta^{N}h^{-1},
\end{equation}
where $\widetilde{C}>0,\,0<\theta<1$ are constants independent of $h$ and $k$. As $k_0$ satisfies the condition 
\[
k_0\leq \frac{T\log_2(\theta^{-1})}{C_0+(3+\beta)(l_1^\ast + \log_2h_0^{-1})}\,,
\]
or, equivalently,
\[
\theta^{\frac{T}{k_0}}\leq 2^{-C_0}\cdot h^{3+\beta},\qquad\mbox{for all $h\geq h_0\,2^{-l_1^\ast}$,}
\]
by using \eqref{eq_DeltaL0} and a similar argument as in Appendix~\ref{appendix_proof_prop_meansquareconvergence},
it follows that there exists $C>0$ independent of $h$ and $k$, such that for all $r>0$,
\begin{align*}
\bigg|\mathbb{E}\bigg[\int_{\Omega_{\mathrm{high}}}\Delta \widehat{L}_{(l_1,\,l_2)}(\gamma)\,\mathrm{d}\gamma \bigg]\bigg| \leq C\cdot h^{2+\frac{1}{2}\beta}k^r,\quad \mathbb{E}\bigg[\bigg|\int_{\Omega_{\mathrm{high}}}\Delta \widehat{L}_{(l_1,\,l_2)}(\gamma)\,\mathrm{d}\gamma \bigg|^2\bigg]  \leq C\cdot h^{2+\frac{1}{2}\beta}k^r,
\end{align*}
since in \eqref{eq_1d_highintergral},
\[
\theta^Nh^{-1}=\theta^{N_0}h^{-1}\theta^{\frac{k_0}{k}}\leq C\cdot h^{2+\frac{1}{2}\beta}\theta^{\frac{k_0}{k}}.
\]

This concludes the proof of Lemma \ref{lem_1dhighwave}.

\section{Optimal index set and complexity}\label{sec_1dOptimalIndex}
So far, we have derived the first and second moments of $\Delta L_{(l_1,\,l_2)}$. Based on these, we now need to find an appropriate index set $\mathcal{I}$ for the MIMC estimator \eqref{eq_1dMIMCestimator}.
We first do this for scheme \eqref{eq_1dimplicitdiscrete} and approximation \eqref{eq_1dLossApprox}.

Finding the optimal (with respect to the computational cost required for a given prescribed accuracy $\alpha\varepsilon$) index set $\mathcal{I}$ is equivalent to solving the optimisation problem
\begin{equation}
\min_{\mathcal{I}\subset\mathbb{N}^2}W(\mathcal{I})\qquad \text{such that}\quad \Big|\mathbb{E}\big[\widehat{L}_{\mathcal{I}}-L\big]\Big|<\alpha\varepsilon,
\end{equation}
where $W$ is the total work as in \eqref{eq_1dTotalwork} and $\widehat{L}_{\mathcal{I}}$ the MIMC estimator for $L$ as in \eqref{eq_1dMIMCestimator}, $\alpha\in(0,1)$. From \eqref{eq_1d_weakerror}, we have
\[
\Bigg|\sum_{(l_1,\,l_2)\in\mathcal{I}}\mathbb{E}\big[\Delta L_{(l_1,\,l_2)}\big] -\mathbb{E}\big[L\big]\Bigg|\leq\varepsilon_0+\varepsilon_1,
\]
where $\varepsilon_0$ defined in \eqref{eq_1d_weakerror0} is the error between $L_{(l^\ast,l^\ast)}$ and $L$, and $\varepsilon_1$ defined in \eqref{eq_1d_weakerror1} is the error between two index sets. We let $\varepsilon_0 = (\alpha\varepsilon)^{1+r}$ with $r>0$, $0<\alpha<1$, and $\varepsilon_1 = \alpha\varepsilon$. Then the weak error between $L_{(l_1^\ast,l_2^\ast)}$ and $L$ has a higher order, and the dominant weak error of the MIMC estimator would be $\varepsilon_1 = \alpha\varepsilon$.

Firstly we find the $l_1^\ast,\,l_2^\ast$ and $k_0$. From Proposition \ref{prop_1dErrorofLoss}, we have
\[
\sqrt{\mathbb{E}\big[ |L_{(l_1,\,l_2)} - L|^2 \big]}\leq C(h^2+k).
\]

 For
\[
h_{l_1} = h_0\cdot2^{-l_1},\quad k_{l_2} = k_0\cdot 2^{-2l_2},
\]
it is sufficient to have
\begin{equation}\label{eq_cond1_l_1_l_2}
\begin{aligned}
C\,h_{l_1^\ast}^2 = C h_0^2\cdot2^{-2l_1^\ast} &\leq (\alpha\varepsilon)^{1+r},\\
C\,k_{l_2^\ast} =C\,k_0\cdot 2^{-2l_2^\ast} &\leq (\alpha\varepsilon)^{1+r}\\
\theta^{T/k_0} &= \varepsilon^{2(1+r)}.
\end{aligned}
\end{equation}

Hence we have
\begin{equation}\label{eq_cond2_l_1_l_2}
\begin{aligned}
l_1^\ast &\geq \frac{1+r}{2}\log_2(\varepsilon^{-1}) +\frac{1+r}{2}\log_2(\alpha^{-1})+ \frac{1}{2}\log_2(C\,h_0^2) = O(\log_2(\varepsilon^{-1})),\\
l_2^\ast &\geq \frac{1+r}{2}\log_2(\varepsilon^{-1}) +\frac{1+r}{2}\log_2(\alpha^{-1})+ \frac{1}{2}\log_2(C\,k_0) = O(\log_2(\varepsilon^{-1})), \\
k_0 &= \frac{T\log_2(\theta^{-1})}{2(1+r)\log_2(\varepsilon^{-1})} = O\bigg(\frac{1}{\log(\varepsilon^{-1})}\bigg).
\end{aligned}
\end{equation}

We follow the idea in \cite{ref5}, and introduce the term ``profit'' as
\begin{equation}\label{eq_profit}
P_{\bm{l}} =\frac{E_{\bm{l}}}{\sqrt{V_{\bm{l}}W_{\bm{l}}}}.
\end{equation}
Here $E_{\bm{l}}$, defined as the expectation of $\Delta L_{\bm{l}}$ as before, can be regarded as the contribution to the solution (if included) or error (if not included) originating from level $(l_1,\,l_2)$. The denominator $\sqrt{V_{\bm{l}}W_{\bm{l}}}$ is proportional to the total work on level $(l_1,\,l_2)$, which is given by
\[
	M_{\bm{l}}W_{\bm{l}} = (1-\alpha^2)^{-2}\varepsilon^{-2}\bigg(\sum_{\bm{l}\in\mathcal{I}}\sqrt{V_{\bm{l}}W_{\bm{l}}}\bigg)\sqrt{V_{\bm{l}}W_{\bm{l}}}
\]
from \eqref{eq_M}. This can be regarded as a knapsack problem, where we try to include those levels with small work and large bias. This gives rise to the following heuristic optimisation.

Following Lemma 2.1 in \cite{ref5}, we define a candidate index set as 
$$\{\bm{l}\in \mathbb{N}^2: P_{\bm{l}}>\nu\}\qquad\text{for some }\nu.$$
Note that this class may not contain the optimal index set, but we will see that it is sufficiently rich to obtain optimal complexity order. From \eqref{eq_1dassumptions_1} to \eqref{eq_1dassumptions_3}, we deduce
\begin{equation}
P_{\bm{l}}=\frac{E_{\bm{l}}}{\sqrt{V_{\bm{l}}W_{\bm{l}}}} \approx C_P\cdot 2^{-(w_1l_1+w_2l_2)},
\end{equation}
for some constant $C_P>0$, where
$$w_1 = \frac{1}{2},\qquad w_2 = 1.$$
We introduce strictly positive normalized weights as 
\begin{align*}
\delta_1 = \frac{w_1}{w_1+w_2},\qquad\delta_2 = \frac{w_2}{w_1+w_2}.
\end{align*}
Then $\delta_1+\delta_2 = 1$.
$$\delta_1 = \frac{1}{3},\qquad \delta_2 = \frac{2}{3}.$$
Now the index set is denoted as
\begin{equation}\label{eq_1dindexset_case1}
\mathcal{I}(l^{\ast}) = \big\{(l_1,\,l_2)\in\mathbb{N}^2:\delta_1l_1+\delta_2l_2\leq l^{\ast}\big\}\cap \mathcal{I}_0,
\end{equation}
where $l^{\ast}$ can be derived from the bias constraint
\begin{equation}\label{eq_1dbiasconstraint}
\bigg|\sum_{(l_1,\,l_2)\in\mathcal{I}_0\setminus\mathcal{I}}E_{(l_1,\,l_2)}\bigg|\leq\alpha\varepsilon.
\end{equation}
We try to find the minimal $l^{\ast}$ such that this bias constraint holds. See Figure \ref{fig_1dindexset}.
\begin{figure}[H]
\centering
\includegraphics[width=2.5in, height=2in]{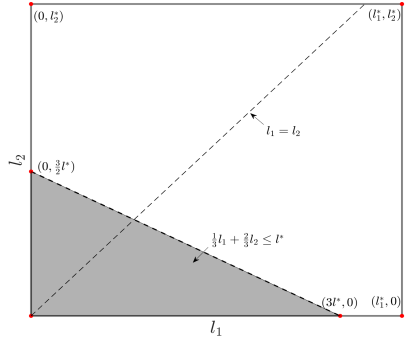}
\caption{Optimal index set.}
\label{fig_1dindexset}
\end{figure}
To this end, we approximate $E_{(l_1,\,l_2)}$ by $C_1^\ast\,h_0^2\,k_0\cdot2^{-2l_1-2l_2}$ from \eqref{eq_1dassumptions_1}. Inserting this in \eqref{eq_1dbiasconstraint}, we choose $l^{\ast}$ such that
\begin{equation}
2^{-3l^{\ast}} = \frac{3\,\alpha}{8C_1^\ast\,h_0^2\,k_0}\varepsilon,\quad \Rightarrow l^{\ast} = \frac{1}{3}\log_2\left(\frac{8C_1^\ast\,h_0^2\,k_0}{3\,\alpha}\varepsilon^{-1}\right).
\end{equation}

Now we compute the computational cost under this index set. Recall from \eqref{eq_totalwork} that
$$W\leq (1-\alpha^2)^{-2}\varepsilon^{-2}\bigg(\sum_{\bm{l}\in\mathcal{I}}\sqrt{V_{\bm{l}}W_{\bm{l}}}\bigg)^2 + \sum_{\bm{l}\in\mathcal{I}}W_{\bm{l}}.$$
Since the first term dominates the second one, we only need to compute
\begin{align*}
\bigg(\sum_{\bm{l}\in\mathcal{I}}\sqrt{V_{\bm{l}}W_{\bm{l}}}\bigg)^2 \sim k_0^{-1}\bigg(\sum_{l_1=0}^{3l^{\ast}}\sum_{l_2=0}^{(-l_1+3l^{\ast})/2}2^{-\frac{3}{2}l_1-l_2}\bigg)^2=\ C\cdot k_0^{-1} = O\big(\big|\log\varepsilon\big|\big),
\end{align*}
where $C$ is a positive constant. Therefore, using discretisation \eqref{eq_1dimplicitdiscrete} and $L_{(l_1,\,l_2)}$ \eqref{eq_1dLossApprox}, the order of the total computational cost $W$ is
\begin{equation}
W= O\left(\varepsilon^{-2}\big|\log\varepsilon\big|\right).
\end{equation}
Although theoretically there is a log term in the complexity, it usually does not affect the numerical tests, as $\theta$ is small, and $\theta^N$ dominates $h^{-1}$.

\section{Some sub-optimal approximations}\label{sec_1dOtherResults}
\subsection{The alternative discretisation \eqref{eq_1dimplicitdiscrete_2}}\label{sec_1dalternative}
Recall the alternative discretisation scheme given in (\ref{eq_1dimplicitdiscrete_2}), in which we apply the Milstein scheme first, then apply a second finite difference with step size $h$. This makes no difference in the convergence order of the multilevel scheme, of which the second moment of $\Delta L_l\coloneqq L_{(l,\,l)} - L_{(l-1,\,l-1)}$ satisfies
\begin{equation}\label{eq_1dMLMCvar}
\mathbb{E}\big[\left|\Delta L_l\right|^2\big]= O(h^4) + O(k^2) + O\left(h^4k\right),
\end{equation}
where $h = h_0\cdot 2^{-l}$, $k = k_0\cdot 4^{-l}.$
As $O(h^2)= O(k)$ in order to balance the bias, $O(h^4k)$ is a higher order term in \eqref{eq_1dMLMCvar}.
However, in the multi-index scheme, $O(h^4k)$ is no longer a higher order term due to the independence of $h$ and $k$. This leads to the difference in complexity between the MLMC and MIMC methods.
\begin{proposition}
Consider the approximation~\eqref{eq_1dimplicitdiscrete_2} and \eqref{eq_1dLossApprox}. Assume $\rho\leq 1/\sqrt{2}$. 
Then there exists a real number $C>0$, such that for any $l_1^\ast\in\mathbb{N}$, any $h_0>0$, and any $k_0>0$ such that
\[
k_0\leq \frac{T\log_2(\theta^{-1})}{C_0+(3+\beta)(l_1^\ast + \log_2h_0^{-1})}\,,
\]
where $0<\theta<1$, $\beta>0$, $C_0>0$ are the same constants as in Remark \ref{rmk_1dImplicitConvergence}, the following holds:

For any $0\leq l_1 \leq l_1^\ast$, $l_2\geq0$ with $h=h_0\cdot 2^{-l_1}$, $k = k_0\cdot 2^{-2l_2}$, the first and second moments of $\Delta L_{(l_1,\,l_2)}$ satisfy
\begin{equation}
\mathbb{E}\left[\Delta L_{(l_1,\,l_2)}\right]\leq C\,h^2k,\quad\mathbb{E}\big[\left|\Delta L_{(l_1,\,l_2)}\right|^2\big] \leq C \,h^4k.
\end{equation}
\end{proposition}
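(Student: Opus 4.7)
The plan is to imitate the Fourier analysis used for scheme \eqref{eq_1dimplicitdiscrete} in Section~\ref{sec_proofofmaintheorem}, pinpointing precisely the step where the two schemes diverge and tracking the additional contribution through to the moment bounds. First I would repeat the derivation of \eqref{eq_1dCn1}, but now for scheme \eqref{eq_1dimplicitdiscrete_2}: applying the discrete Fourier transform, the symbol of $D_2$ is $-4\sin^2(h\gamma/2)$ whereas that of $D_1^2$ is $-4\sin^2(h\gamma)$, so the amplification factor takes the same form as \eqref{eq_1dCn1} but with
\[
a = -\frac{2\sin^2(h\gamma/2)}{h^2} = \hat{a},\qquad c = \frac{\sin h\gamma}{h},
\]
instead of $a = -\sin^2(h\gamma)/(2h^2)$. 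Mean-square stability under $\rho\le 1/\sqrt{2}$ and the high wave number estimates in Section~\ref{sec_1dhighwave} go through with essentially the same argument, since the bound \eqref{eq_1d_highintergral} relies only on the analogue of the ratio bounding $\mathbb{E}[|X_N|^2]$ from above, and this bound has the same structure for both schemes. Hence the proof of the high wave number contribution, and the corresponding use of Remark~\ref{rmk_1dImplicitConvergence} to absorb the $\theta^N h^{-1}$ factor under the hypothesis on $k_0$, can be taken over essentially verbatim.

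The substantive change occurs in the low wave number analysis. Repeating the Taylor expansion of $\log C_n^{l_1,l_2}$ in Section~\ref{sec_1dlowwaveregion}, the crucial identity $a+\tfrac{1}{2}c^2=0$, which for scheme \eqref{eq_1dimplicitdiscrete} caused the term $(a+\tfrac{1}{2}c^2)\rho k\sum Z_n^2$ to vanish, no longer holds. Using $\sin h\gamma = 2\sin(h\gamma/2)\cos(h\gamma/2)$ one computes
\[
a+\tfrac{1}{2}c^2 = -\frac{2\sin^4(h\gamma/2)}{h^2} = -\tfrac{1}{8}h^2\gamma^4 + \gamma^6 O(h^4),
\]
so an extra term of order $h^2\gamma^4\,\rho k\sum_{n=0}^{N-1}Z_n^2$ appears in $\sum_{n=0}^{N-1} e_n^{l_1,l_2}$. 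I would split this into its deterministic part $h^2\gamma^4\,\rho T$ (from $\mathbb{E}[Z_n^2]=1$) and its stochastic part $h^2\gamma^4\,\rho\bigl(k\sum_{n=0}^{N-1}(Z_n^2-1)\bigr)$, which has variance $O(h^4\gamma^8\,k)$.

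Then I would propagate this through the mixed-difference algebra leading to \eqref{eq_1d4ExpDifference}. The deterministic part depends on $h$ but not on $k$, so the $l_2$-difference kills it; after the $l_1$-difference one picks up an extra $O(h^2)$, and the resulting contribution to $\mathbb{E}[\Delta L_{(l_1,l_2)}]$ is still $O(h^2 k)$, consistent with the claimed first moment bound. The stochastic part survives both differences at leading order: after the $l_2$-difference and using coupled Brownian increments $\tilde{Z}_n=\tfrac{1}{2}(Z_{4n-3}+Z_{4n-2}+Z_{4n-1}+Z_{4n})$, the expression $k\sum(Z_n^2-1) - 4k\sum(\tilde{Z}_n^2-1)$ has variance $O(k)$; the $l_1$-difference then yields the expected $O(h^2)$ factor on the $h^2\gamma^4$ prefactor, so this contribution to $\Delta\widehat{L}_{(l_1,l_2)}(\gamma)$ is of order $h^4\gamma^8\sqrt{k}$ in $L^2$.

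Finally I would integrate over $\Omega_{\text{low}}$ against $h\chi$, using the exponential decay of $X(T)$ in $\gamma$ to bound the $\gamma$-moments, and combine with the already-known $O(h^4 k^2)$ contribution from all the terms present for scheme \eqref{eq_1dimplicitdiscrete}. The dominant contribution to the second moment is now the new $O(h^4 k)$ term, giving
\[
\mathbb{E}\bigl[|\Delta L_{(l_1,l_2)}|^2\bigr] \le C\,h^4 k,
\]
while the first moment bound $|\mathbb{E}[\Delta L_{(l_1,l_2)}]|\le C\,h^2 k$ is unchanged. The main obstacle I anticipate is keeping the bookkeeping clean when splitting $(a+\tfrac{1}{2}c^2)\rho k\sum Z_n^2$ into deterministic and stochastic components and showing rigorously that, under the two levels of coupling used in the mixed difference, the stochastic part contributes $O(k)$ and not $O(k^2)$ to the variance; this is exactly the mechanism that makes scheme \eqref{eq_1dimplicitdiscrete_2} suboptimal for MIMC, so it deserves care.
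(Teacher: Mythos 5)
Your overall strategy is exactly the paper's: redo the Fourier analysis of Section~\ref{sec_proofofmaintheorem} for scheme \eqref{eq_1dimplicitdiscrete_2}, observe that the amplification factor is \eqref{eq_1dCn1} with $a$ replaced by $\hat a$, note that the high wave number argument carries over unchanged, and locate the loss of order in the fact that $\hat a + \tfrac12 c^2 = -2h^{-2}\sin^4(h\gamma/2) = -\tfrac18 h^2\gamma^4 + \gamma^6O(h^4) \neq 0$, so that the term $(\hat a + \tfrac12 c^2)\rho k \sum_n Z_n^2$ survives. Your identification of this mechanism, and your computation that $k\sum_{n}(Z_n^2-1) - 4k\sum_n(\tilde Z_n^2 - 1)$ has variance $O(k)$ (the paper states this as $\mathbb{E}\big[\big(\sum_n Z_n^2 - 4\sum_n\tilde Z_n^2\big)^2\big] = C_0 k^{-1}$), are both correct and match the paper's proof.

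There is, however, an error in your treatment of the $l_1$-difference which makes your accounting internally inconsistent. You claim that ``the $l_1$-difference then yields the expected $O(h^2)$ factor on the $h^2\gamma^4$ prefactor,'' so that the stochastic term contributes $O(h^4\sqrt k)$ to $\Delta\widehat L_{(l_1,l_2)}$ in $L^2$ --- which would make its contribution to the second moment $O(h^8k)$, not the $O(h^4k)$ you then declare dominant in your final paragraph. In fact the $l_1$-difference gains nothing here: the prefactor $\hat a + \tfrac12 c^2$ has no $h$-independent leading part, so differencing $-\tfrac18 h^2\gamma^4$ at mesh $h$ against $-\tfrac18(2h)^2\gamma^4$ at mesh $2h$ gives $\tfrac38 h^2\gamma^4$, which is still $O(h^2\gamma^4)$. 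The stochastic term therefore enters the full mixed difference at order $h^2\gamma^4\sqrt k$ in $L^2$, and the $h^4$ in the variance bound comes from squaring this single $h^2$ prefactor, not from an $h^2\times h^2$ product of prefactor and $l_1$-gain. This is precisely why the scheme is suboptimal for MIMC: the spatial differencing, which normally buys a factor of $h^2$, is powerless against a term already proportional to $h^2$ with no constant part. With this correction your argument closes and coincides with the paper's; as written, your stated conclusion does not follow from your own intermediate estimates.
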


\begin{proof}
In this discretisation scheme,
$$\bigg(I + \frac{\mu k}{2h}D_1 -\frac{k}{2h^2}D_2\bigg)V^{n+1} = \bigg(I-\frac{\sqrt{\rho k}Z_n}{2h}D_1 + \frac{\rho k(Z_n^2-1)}{2h^2}D_2\bigg)V^n.$$
Hence we have
\begin{equation}
X_{n+1}(\gamma) = \frac{1-\mathrm{i}c\sqrt{\rho k}Z_n+\hat{a}\rho k (Z_n^2-1)}{1-\hat{a}k}X_n(\gamma),
\end{equation}
where
\begin{align*}
\hat{a} = -\frac{2\sin^2(\gamma h/2)}{h^2},\qquad
c = \frac{\sin\gamma h}{h}.
\end{align*}
Following the previous analysis, in the low wave region (see Section \ref{sec_1dlowwaveregion}) we can derive that 
\begin{align*}
\sum_{n=0}^{N-1} e_n^{l_1,\,l_2} =& \sum_{n=0}^{N-1}\log C_n^{l_1,\,l_2} +\frac{1}{2}(1-\rho)\gamma^2T + \mathrm{i}\sqrt{\rho}\gamma W_T\\
=&\ \mathrm{i}(\gamma-c)\sqrt{\rho k}\sum_{n=0}^{N-1}Z_n + \Big(\hat{a}+\frac{1}{2}c^2\Big)\rho k\sum_{n=0}^{N-1}Z_n^2 + \Big(\hat{a}+\frac{1}{2}\gamma^2\Big)(1-\rho)T\\
& + \mathrm{i}\Big(\hat{a}c+\frac{1}{3}c^3\Big)\rho k\sqrt{\rho k}\sum_{n=0}^{N-1}Z_n^3 - \mathrm{i}\hat{a}c\rho k\sqrt{\rho k}\sum_{n=0}^{N-1}Z_n  - \Big(\frac{1}{2}\hat{a}^2+\hat{a}c^2+\frac{1}{4}c^4\Big)\rho^2k^2\sum_{n=0}^{N-1}Z_n^4\\
& + \big(\hat{a}^2+\hat{a}c^2\big)\rho^2k^2\sum_{n=0}^{N-1}Z_n^2 + \gamma^4o(k).
\end{align*}
In this case, $\hat{a}+\frac{1}{2}c^2$ is no longer zero, so that
$$ \exp\bigg(\sum_{n=0}^{N-1} e_n^{l_1,\,l_2}\bigg) = \mathrm{e}^{(\hat{a}+\frac{1}{2}\gamma^2)(1-\rho)T}\cdot\bigg(\mathrm{i}(\gamma-c)\sqrt{\rho k}\sum_{n=0}^{N-1}Z_n + \Big(\hat{a}+\frac{1}{2}c^2\Big)\rho k\sum_{n=0}^{N-1}Z_n^2 + r(k) \bigg),$$
where $r(k)$ satisfies
$$\mathbb{E}[r(k)]= O(k),\qquad \mathbb{E}\big[\big|r(k)\big|^2\big] = O(k^2).$$
Now we have
\begin{align*}
\exp\bigg(\sum_{n=0}^{N-1} e_n^{l_1,\,l_2}\bigg) - \exp\bigg(\sum_{n=0}^{N'-1} e_n^{l_1,\,l_2-1}\bigg) =\mathrm{e}^{(\hat{a}+\frac{1}{2}\gamma^2)(1-\rho)T}\cdot\bigg(\big(\hat{a}+\frac{1}{2}c^2\big)\rho k\bigg(\sum_{n=0}^{N-1}Z_n^2-4\sum_{n=0}^{N'-1}\widetilde{Z}_n^2\bigg) + r(k)\bigg).
\end{align*}
However,
\[
\mathbb{E}\bigg(\sum_{n=0}^{N-1}Z_n^2-4\sum_{n=0}^{N'-1}\widetilde{Z}_n^2\bigg)^2 = C_0\cdot k^{-1},
\]
for some constant $C_0$.
Hence similar to the proof of Theorem \ref{thm_1dmainresult}, there exists a constant $C$, such that 
\begin{align*}
\left|\mathbb{E}\left[\int_{\Omega_{\text{low}}}\Delta \widehat{L}_{(l_1,\,l_2)}(\gamma)\,\mathrm{d}\gamma\right]\right| \leq C\,h^2 k,\qquad \mathbb{E}\left[\left|\int_{\Omega_{\text{low}}}\Delta \widehat{L}_{(l_1,\,l_2)}(\gamma)\,\mathrm{d}\gamma\right|^2\right] \leq C\,h^4 k.
\end{align*}
For $\gamma$ in the high wave region, the proof is the same as in Theorem \ref{thm_1dmainresult}.
\end{proof}

By a similar computation to Section \ref{sec_1dOptimalIndex}, the index set is now
$$\mathcal{I}(l^{\ast}) = \left\{(l_1,\,l_2)\in\mathbb{N}^2:\frac{1}{5}l_1+\frac{4}{5}l_2\leq l^{\ast}\right\}\cap\mathcal{I}_0,$$
and the total computational cost $W$ satisfies
$$W= O\left(\varepsilon^{-2}\big|\log\varepsilon\big|^3\right).$$

The corresponding numerical results are demonstrated at the end of Section~\ref{sec_5.1}.

\subsection{An alternative functional approximation}
Instead of approximating the loss by~\eqref{eq_1dLossApprox}, consider a simpler approximation,
\begin{equation}\label{eq_1dcoarseloss}
L_{(l_1,\,l_2)} = 1-h\sum_{j=0}^{\infty} V_{j}^N = 1-\frac{1}{2\pi}\int_{-\pi/h}^{\pi/h} hX_N(\gamma) \bigg(\sum_{j=-j_0}^{\infty} \mathrm{e}^{\mathrm{i}jh\gamma}\bigg)\,\mathrm{d}\gamma,
\end{equation}
where $h = h_0\cdot 2^{-l_1}, k = k_0\cdot 2^{-2l_2}$.
Using the discretisation from the previous subsection, we have the following results.
\begin{proposition}
Consider the approximation~\eqref{eq_1dimplicitdiscrete_2} and \eqref{eq_1dcoarseloss}. Assume $\rho\leq 1/\sqrt{2}$. 
Then there exists a real number $C>0$, such that for any $l_1^\ast\in\mathbb{N}$, any $h_0>0$, and any $k_0>0$ such that
\[
k_0\leq \frac{T\log_2(\theta^{-1})}{C_0+(3+\beta)(l_1^\ast + \log_2h_0^{-1})}\,,
\]
where $0<\theta<1$, $\beta>0$, $C_0>0$ are the same constants as in Remark \ref{rmk_1dImplicitConvergence}, the following holds:

For any $0\leq l_1 \leq l_1^\ast$, $l_2\geq0$ with $h=h_0\cdot 2^{-l_1}$, $k = k_0\cdot 2^{-2l_2}$, the first and second moments of $\Delta L_{(l_1,\,l_2)}$ satisfy
\begin{equation}
\mathbb{E}\left[\Delta L_{(l_1,\,l_2)}\right]\leq C\,hk,\quad\mathbb{E}\big[\left|\Delta L_{(l_1,\,l_2)}\right|^2\big]\leq C\left(h^4k + h^2k^2\right).
\end{equation}
\end{proposition}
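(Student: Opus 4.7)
The plan is to adapt the Fourier proof of the previous proposition, tracking how replacing the trapezoidal rule \eqref{eq_1dLossApprox} by the rectangle rule \eqref{eq_1dcoarseloss} degrades the spatial cancellation by one order of $h$. Writing \eqref{eq_1dcoarseloss} in Fourier form gives the representation \eqref{eq_1dloss} but with $\chi(h,\gamma)$ from \eqref{eq_1dFourierNotationSimplify} replaced by the rectangle-rule symbol
\[
\chi_{\mathrm{r}}(h,\gamma) = \sum_{j=-j_0}^{\infty}\mathrm{e}^{\mathrm{i}jh\gamma}.
\]
The mixed difference $\Delta\widehat{L}_{(l_1,l_2)}$ retains the structural decomposition \eqref{eq_DeltaL0} with $\chi_{\mathrm{r}}$ in place of $\chi$, and I would split $\gamma$ into the low- and high-wave regions \eqref{eq_1d_Omega_low}--\eqref{eq_1d_Omega_high}. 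The high-wave analysis of Lemma~\ref{lem_1dhighwave} depends only on the damping of $|X_N|^2$ and the imposed condition on $k_0$, so it carries over unchanged and produces only higher-order contributions.

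The key new quadrature estimate is that, paired against a smooth function with rapid decay in $\gamma$, the cancellation $h\chi_{\mathrm{r}}(h,\gamma)-2h\chi_{\mathrm{r}}(2h,\gamma)$ contributes $O(h)$, rather than the $O(h^2)$ produced by the trapezoidal kernel; this reflects the first-order accuracy of the rectangle rule. Combined with the expansion already derived for scheme \eqref{eq_1dimplicitdiscrete_2} in Section~\ref{sec_1dalternative},
\[
X_N^{l_1,l_2}-X_{N'}^{l_1,l_2-1} = \mathrm{e}^{(\hat a+\tfrac12\gamma^2)(1-\rho)T}\bigl[(\hat a+\tfrac12 c^2)\rho k\bigl({\textstyle\sum_n Z_n^2-4\sum_n\widetilde Z_n^2}\bigr) + r(k)\bigr],
\]
with $\hat a+\tfrac12 c^2 = O(\gamma^4 h^2)$, $\mathbb{E}\bigl[\bigl(\sum_n Z_n^2-4\sum_n\widetilde Z_n^2\bigr)^2\bigr] = O(k^{-1})$, $\mathbb{E}[r(k)] = O(k)$, and $\mathbb{E}[|r(k)|^2] = O(k^2)$, this yields $\mathbb{E}[X_N^{l_1,l_2}-X_{N'}^{l_1,l_2-1}] = O(k)$ and $\mathbb{E}[|X_N^{l_1,l_2}-X_{N'}^{l_1,l_2-1}|^2] = O(h^4 k) + O(k^2)$ after integration against the smooth $\gamma$-decaying factors coming from $X(T)$.

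The two moments assemble from the two parts of \eqref{eq_DeltaL0}. For the mean, the second term dominates: its $O(h)$ quadrature prefactor times the $O(k)$ mean of $X_N-X_{N'}$ gives $O(hk)$, which exceeds the $O(h^2 k)$ from the mixed-difference term (handled exactly as in the previous proposition), yielding $|\mathbb{E}[\Delta L_{(l_1,l_2)}]|\le Chk$. For the second moment, the mixed-difference term still contributes $O(h^4 k)$ as before, since taking the mixed difference over $l_1$ brings an extra $O(h^2)$ in the deterministic prefactor in front of the $\sum Z^2$-type stochastic factor. The second term contributes $h^2\cdot\mathbb{E}[|X_N-X_{N'}|^2] = h^2\bigl(O(h^4 k) + O(k^2)\bigr) = O(h^6 k) + O(h^2 k^2)$, so summing gives the claimed $O(h^4 k) + O(h^2 k^2)$ bound, where the new $h^2 k^2$ contribution originates from the mean-$O(k)$ remainder $r(k)$ in $X_N-X_{N'}$ rather than from its variance.

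The main obstacle will be the cross-correlation when squaring the integrated $\Delta\widehat L$, since the two parts of \eqref{eq_DeltaL0} share the same Brownian path; I would handle this by a Cauchy--Schwarz grouping so that the smaller $O(h^4 k)$ bound on the mixed-difference part is not inflated. A related technical step is to verify rigorously that $\int \bigl(h\chi_{\mathrm{r}}(h,\gamma) - 2h\chi_{\mathrm{r}}(2h,\gamma)\bigr) g(\gamma)\,\mathrm{d}\gamma$ scales like $h\lVert g\rVert$ for smooth rapidly decaying $g$, which I would do by identifying $h\chi_{\mathrm{r}}(h,\cdot)$ as the Fourier symbol of the left-endpoint rule on the positive half-line and invoking standard quadrature error bounds applied to the inverse Fourier transform of $g$.
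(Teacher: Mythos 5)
Your proposal is correct and follows essentially the same route the paper intends: the paper's own proof of this proposition is only the remark ``Similar to the proof of Theorem~\ref{thm_1dmainresult}'', i.e.\ the Fourier decomposition \eqref{eq_DeltaL0} with the low/high wave split, combined with the scheme-\eqref{eq_1dimplicitdiscrete_2} expansion of Section~\ref{sec_1dalternative}. Your identification of the single genuinely new ingredient --- that the rectangle-rule kernel degrades the cancellation in $h\chi(h,\gamma)-2h\chi(2h,\gamma)$ from $O(h^2)$ to $O(h)$, which produces the $O(hk)$ mean and the extra $O(h^2k^2)$ variance term via the $O(k)$-mean remainder $r(k)$ --- is exactly what is needed to recover the stated bounds.
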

\begin{proof}
Similar to the proof of Theorem~\ref{thm_1dmainresult}.
\end{proof}

By the choice of $h$ and $k$ above, we get
$$ O(h^4k)= 2^{-2(l_1+l_2)-2l_1},\quad O(h^2k^2)= 2^{-2(l_1+l_2)-2l_2},$$
which gives
$$O\left(h^4k\right) + O\left(h^2k^2\right)= 2^{-2(l_1+l_2) -2 \min\{l_1,\,l_2\}}.$$
As a consequence, the convergence rate varies between $-2$ and $-3$, depending on which refinement path is chosen.

The peculiarity of this scheme is that it has two leading terms of different orders in the variance, such that the assumptions of the framework in \cite{ref5} (in this case, Assumption 2) are not satisfied. However, the fundamental principles of the MIMC concept still apply and we can split the domain into two parts: $\{(l_1,l_2): l_1\leq l_2\}\cap\mathcal{I}_0$ and $\{(l_1,l_2): l_1> l_2\}\cap\mathcal{I}_0$, and the optimal index set is as shown in Figure~\ref{fig_1d_optimal_index3}, where the analysis from Section \ref{sec_1dOptimalIndex} can be applied separately in the two parts. The total computational cost $W$ satisfies
\[
W= O\left(\varepsilon^{-2}\big|\log\varepsilon\big|^3\right).
\]

\begin{figure}[H]
\centering
\includegraphics[width=2.5in, height=2in]{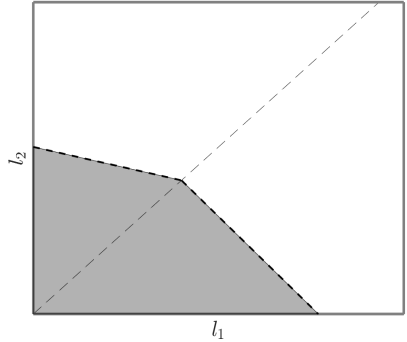}
\caption{Optimal index set, with schemes \eqref{eq_1dimplicitdiscrete_2} and \eqref{eq_1dcoarseloss}.}
\label{fig_1d_optimal_index3}
\end{figure}

\section{Numerical implementation}\label{sec_1dNumerical}
Here, we present numerical tests for the schemes above to illustrate the theoretical convergence results and compare the complexity among MLMC and two MIMC schemes. Moreover we solve the SPDE on an interval $(x_{\min},x_{\max})$ with zero boundary conditions. 
We choose the parameters $\mu=0.081,\ \rho=0.2$, $x_{\min} = -10,\ x_{\max} = 20$, and the initial density is $v(0,x)=\delta(x-x_0), \text{where } x_0=5$. 

Figure~\ref{fig_theta} shows the values of $\theta$ from \eqref{eq_beginningoftheta} corresponding to different correlation $\rho$, where $\theta$ is derived from the following equation
\begin{equation}\label{eq_1d_thetacalculation}
h\int_{\Omega_{\text{high}}} \bigg(\frac{1+\rho\gamma^2ku\cos^2\frac{h\gamma}{2}+\frac{\rho^2}{2}\gamma^4k^2u^2\cos^4\frac{h\gamma}{2}}{1+\gamma^2ku+\frac{1}{4}\gamma^4k^2u^2}\bigg)^N\,\mathrm{d}\gamma = \theta^N.
\end{equation}
The proof of \eqref{eq_1d_thetacalculation} is given in Appendix \ref{appendix_proof_prop_meansquareconvergence}.

\begin{figure}[H]
\centering
\includegraphics[width=2.5in, height=2in]{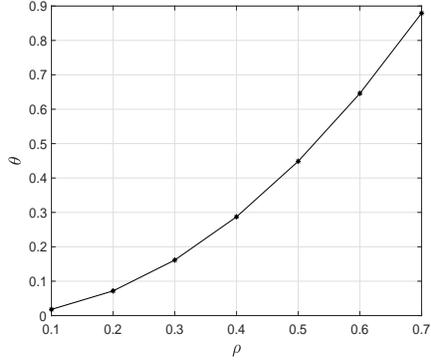}
\caption{$\theta$ vs. $\rho$}
\label{fig_theta}
\end{figure}

For $\rho=0.2$ and $T=5$, $\theta = 0.0678$. So if $h_0=1$, we can choose $k_0=1/2$ for $l_1^\ast \leq 12$, and $k_0=1/4$ for $l_1^\ast\leq 25$, as then $\theta^{N_0}\leq 2^{-C_0}\,h^{4}$, where $h=h_0\cdot2^{-l_1^\ast}$ and $N_0=T/k_0$. Therefore $k_0$ does not matter much in this problem. 

\subsection{Mean and variance of $\Delta L_{(l_1,\,l_2)}$}\label{sec_5.1}
In this section, we conduct numerical tests for $\mathbb{E}\Big[\Delta L_{(l_1,\,l_2)}\Big]$ and $\mathrm{Var}\Big[\Delta L_{(l_1,\,l_2)}\Big]$. The analysis shows that for \eqref{eq_1dimplicitdiscrete} and \eqref{eq_1dLossApprox},
\begin{align*}
\mathbb{E}\Big[\Delta L_{(l_1\,l_2)}\Big] = O(h^2k),\qquad
\mathrm{Var}\Big[\Delta L_{(l_1,\,l_2)}\Big] = O(h^4k^2),
\end{align*}
where
$$ h = h_0\cdot2^{-l_1},\quad k = k_0\cdot2^{-2l_2},$$
such that 
$$ O\big(h^2k\big)= 2^{-2(l_1+l_2)},\quad O\big(h^4k^2\big)= 2^{-4(l_1+l_2)}.$$

Table~\ref{table_1dmean} shows $\log_2\big|\mathbb{E}\Delta L_{(l_1,\,l_2)}\big|$ with different levels of timestep and mesh size, using $10^4$ Monte Carlo samples. 

The data in Table~\ref{table_1dmean} suggest that for $l_1^\ast = 25$ $k_0=1/4$, from given $l \coloneqq l_1+l_2$ to $l+1$ (i.e., from one diagonal to the next), the values decrease by around 2, in line with \eqref{eq_1dassumptions_1}. Figure~\ref{fig_1dmeanplot} is the contour plot of the values of $\log_2\big|\mathbb{E}\Delta L_{(l_1,\,l_2)}\big|$. 
\begin{table}[H]
\center
{\begin{tabular}{@{}c|ccccc}
\backslashbox{$l_1$ ($h$)}{$l_2$ ($k$)} & \multicolumn{1}{c}{1} & \multicolumn{1}{c}{2} & \multicolumn{1}{c}{3} & \multicolumn{1}{c}{4} & \multicolumn{1}{c}{5}  \\
\hline
$1$ & -14.3384 & -16.2991 & -18.2382 & -20.1987 & -22.2193 \\
$2$ & -16.2725 & -18.3163 & -20.2538 & -22.1928 & -24.2061 \\
$3$ & -18.2749 & -20.3508 & -22.2862 & -24.2709 & -26.2991\\
$4$ & -20.2770 & -22.3616 & -24.2059 & -26.2203 & -28.3113\\
$5$ & -22.2776 & -24.3644 & -26.2984 & -28.2543 & -30.2064\\
\end{tabular}}
\caption{$\log_2\big(\big|\mathbb{E}\Delta L_{(l_1,\,l_2)}\big|\big)$ with $h = h_0\cdot2^{-l_1}$ and $k = k_0\cdot2^{-2l_2}$, for approximation \eqref{eq_1dimplicitdiscrete} and \eqref{eq_1dLossApprox}.}
\label{table_1dmean}
\end{table}

Table~\ref{table_1dsquare} shows the corresponding $\log_2\big(\mathrm{Var}\big[\Delta L_{(l_1\,l_2)} \big]\big)$. We can see from the table that from given $l \coloneqq l_1+l_2$ to $l+1$, the values decrease by approximately 4, in line with \eqref{eq_1dassumptions_2}. Figure~\ref{fig_1dvarplot} is the contour plot of the values of $\log_2\mathrm{Var}[\Delta L_{(l_1\,l_2)}]$.
\begin{table}[H]
\center
{\begin{tabular}{@{}c|cccccc}
\backslashbox{$l_1$ ($h$)}{$l_2$ ($k$)} & \multicolumn{1}{c}{1} & \multicolumn{1}{c}{2} & \multicolumn{1}{c}{3} & \multicolumn{1}{c}{4} & \multicolumn{1}{c}{5}\\
\hline
$1$ & -26.2614 & -30.0205 & -34.0963 & -38.1821 & -42.0136 \\
$2$ & -29.3889 & -33.1355 & -37.2427 & -41.3587 & -45.2778 \\
$3$ & -33.1243 & -36.8837 & -41.0030 & -44.9869 & -48.9696 \\
$4$ & -37.0547 & -40.8191 & -44.9404 & -48.9230 & -52.9080 \\
$5$ & -41.0371 & -44.8028 & -48.8718 & -52.9715 & -56.9993 \\ 
\end{tabular}}
\caption{$\log_2\big(\mathrm{Var}[\Delta L_{(l_1\,l_2)}]\big)$ with $h = h_0\cdot2^{-l_1}$ and $k = k_0\cdot2^{-2l_2}$, for approximation \eqref{eq_1dimplicitdiscrete} and \eqref{eq_1dLossApprox}.}
\label{table_1dsquare}
\end{table}

\begin{figure}[H]
\centering
\subfigure[$\log_2\big|\mathbb{E}\Delta L_{(l_1\,l_2)} \big|$]{
\includegraphics[width=2.5in, height=2in]{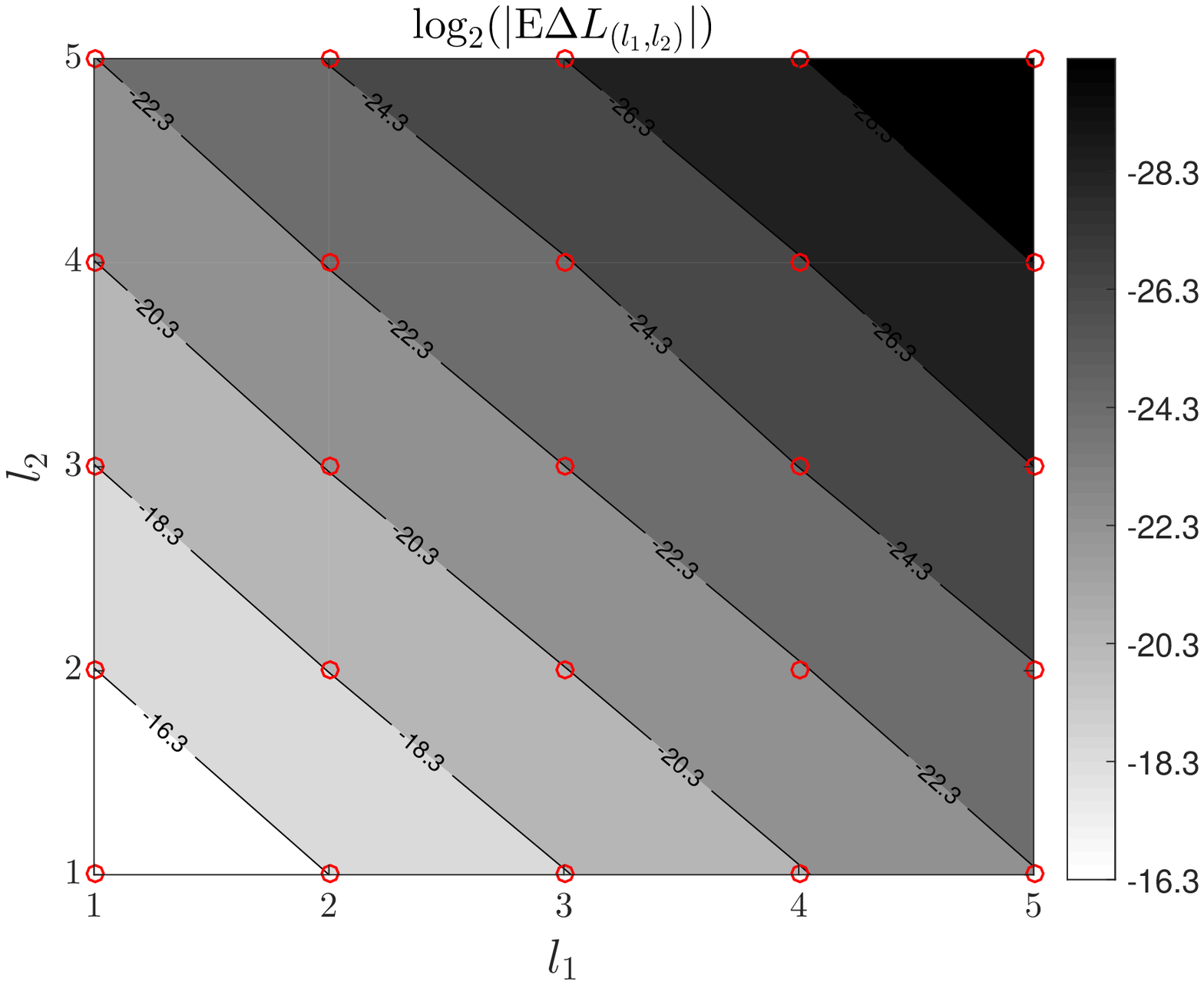}
\label{fig_1dmeanplot}
}
\subfigure[$\log_2\mathrm{Var}(\Delta L_{(l_1,\,l_2)})$]{
\includegraphics[width=2.5in, height=2in]{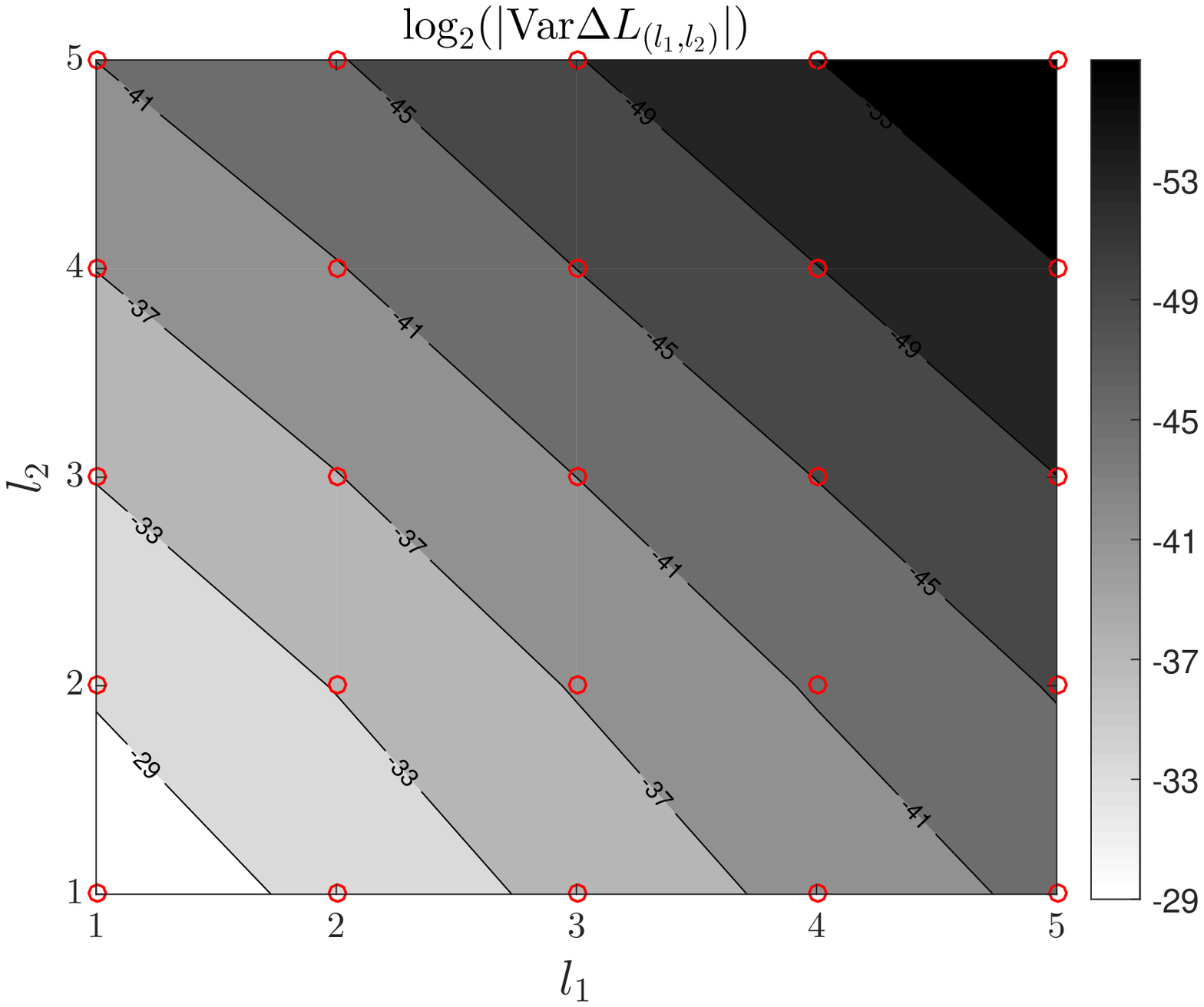}
\label{fig_1dvarplot}}
\caption{Rate verification: contour plots of log values of sample mean (left) and variance (right) of mixed differences used in MIMC.}
\end{figure}
Figure~\ref{fig_1dboundaryplot} compares MLMC and MIMC at boundary levels, i.e., $l_1=0$ or $l_2=0$. We can see from the plots that they have the same rate of convergence, which is consistent with Remark~\ref{rmk_1dfirstorderdifference}.
\begin{figure}[H]
\centering
\subfigure[$\log_2\big|\mathbb{E}\Delta L_{(l_1\,l_2)} \big|$]{
\includegraphics[width=2.5in, height=2in]{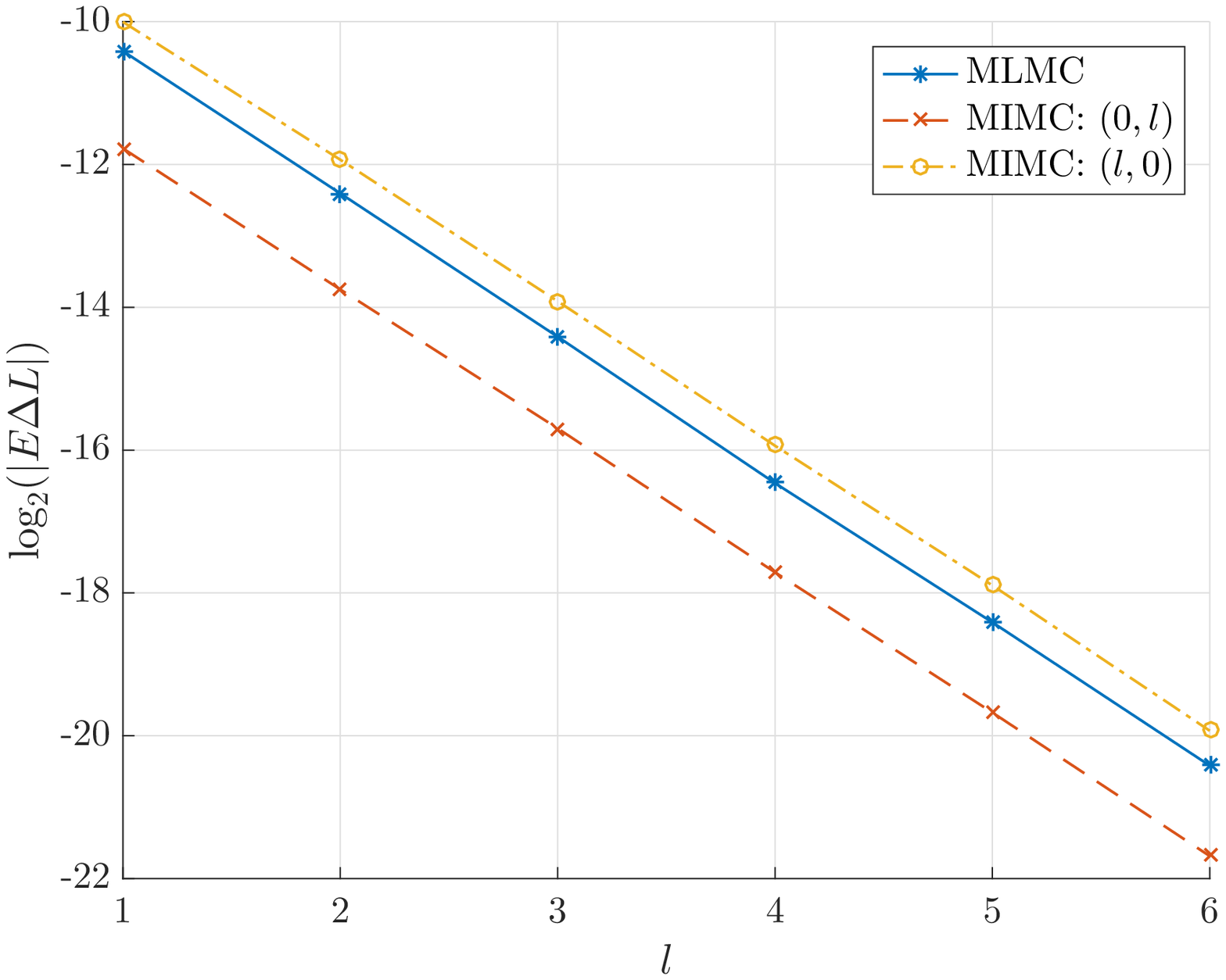}
\label{fig_1dboundarymeanplot}
}
\subfigure[$\log_2\mathrm{Var}(\Delta L_{(l_1,\,l_2)})$]{
\includegraphics[width=2.5in, height=2in]
{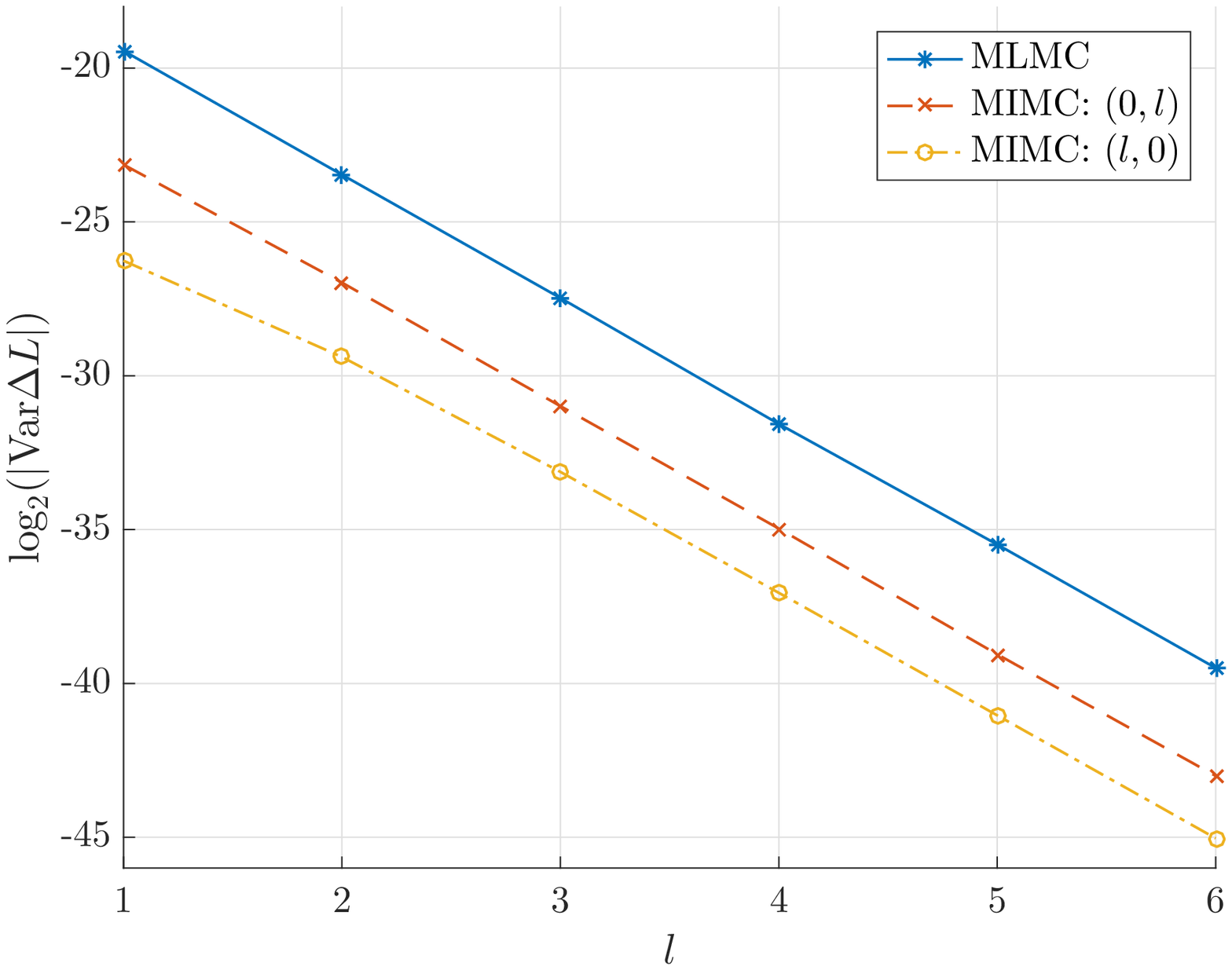}
\label{fig_1dboundaryvarplot}}
\caption{Log values of mean and variance for MLMC and MIMC with $l_1=0$ or $l_2=0$.}
\label{fig_1dboundaryplot}
\end{figure}

Now we compare the results with the alternative discretisation scheme \eqref{eq_1dimplicitdiscrete_2} but the same $L_{(l_1,\,l_2)}$ from \eqref{eq_1dLossApprox}. The analysis in Section \ref{sec_1dalternative} shows that under that scheme,
\begin{align*}
\mathbb{E}\Big[\Delta L_{(l_1\,l_2)}\Big] = O(h^2k),\qquad
\mathrm{Var}\Big[\Delta L_{(l_1,\,l_2)}\Big] = O(h^4k).
\end{align*}

Figure~\ref{fig_oldscheme} shows the contour plots of $\log_2\big|\mathbb{E}\Delta L_{(l_1,\,l_2)}\big|$ and $\log_2\big(\mathrm{Var}[\Delta L_{(l_1\,l_2)}]\big)$ using the alternative discretisation scheme \eqref{eq_1dimplicitdiscrete_2} and \eqref{eq_1dLossApprox}, with $h = h_0\cdot2^{-l_1}$ and $k = k_0\cdot2^{-2l_2}$.

\begin{figure}[H]
\centering
\subfigure[$\log_2\big|\mathbb{E}\Delta L_{(l_1\,l_2)} \big|$]{
\includegraphics[width=2.5in, height=2in]{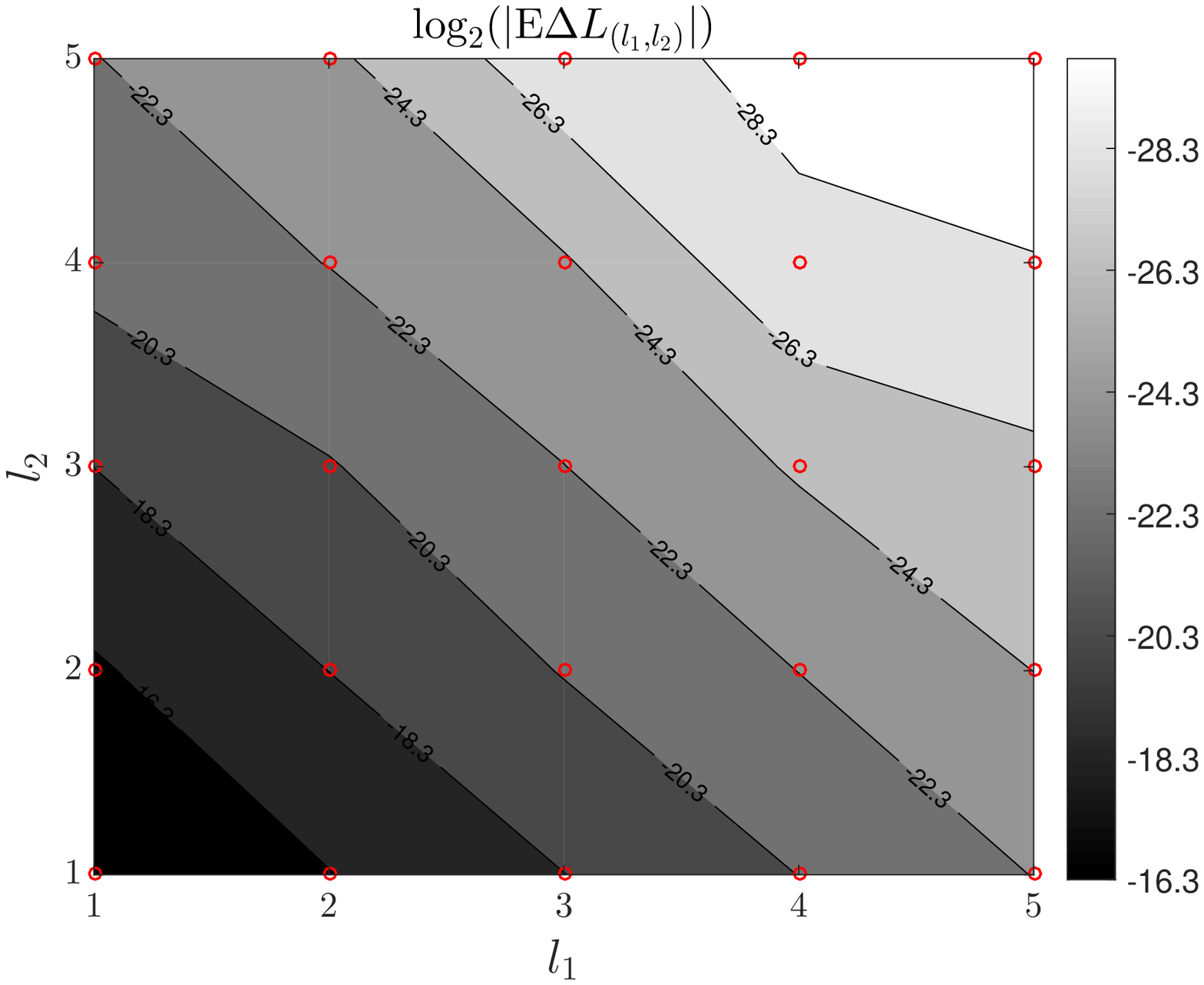}
\label{fig_1dmeanplot2}
}
\subfigure[$\log_2\mathrm{Var}\left(\Delta L_{(l_1,\,l_2)}\right)$]{
\includegraphics[width=2.5in, height=2in]{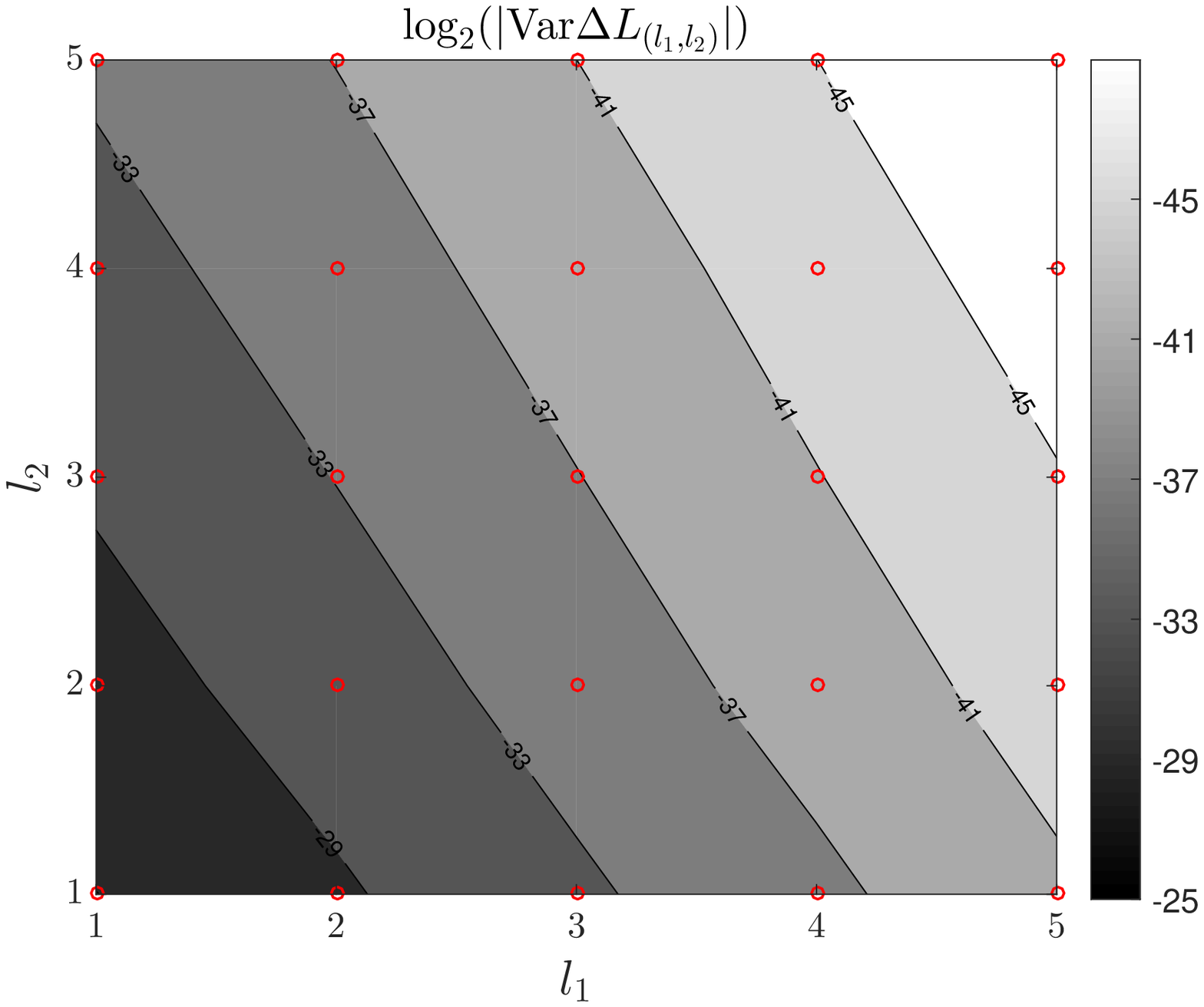}
\label{fig_1dvarplot2}}
\caption{Log values of mean and variance by the other discretisation scheme, given by \eqref{eq_1dimplicitdiscrete_2} and \eqref{eq_1dLossApprox}.}
\label{fig_oldscheme}
\end{figure}

We compute the optimal index set based on the profit \eqref{eq_profit}. Figure \ref{fig_1dprofit1} shows the profit using scheme \eqref{eq_1dimplicitdiscrete} and \eqref{eq_1dLossApprox}. This provides the basis for the optimal index set \eqref{eq_1dindexset_case1}. For scheme \eqref{eq_1dimplicitdiscrete_2} and \eqref{eq_1dcoarseloss}, there are two leading orders of variance, hence the index set is not triangular, as one can deduce from Figure \ref{fig_1dprofit3}.

\begin{figure}[H]
\centering
\subfigure[Profit of scheme \eqref{eq_1dimplicitdiscrete} and \eqref{eq_1dLossApprox}.]{
\includegraphics[width=2.5in, height=2in]{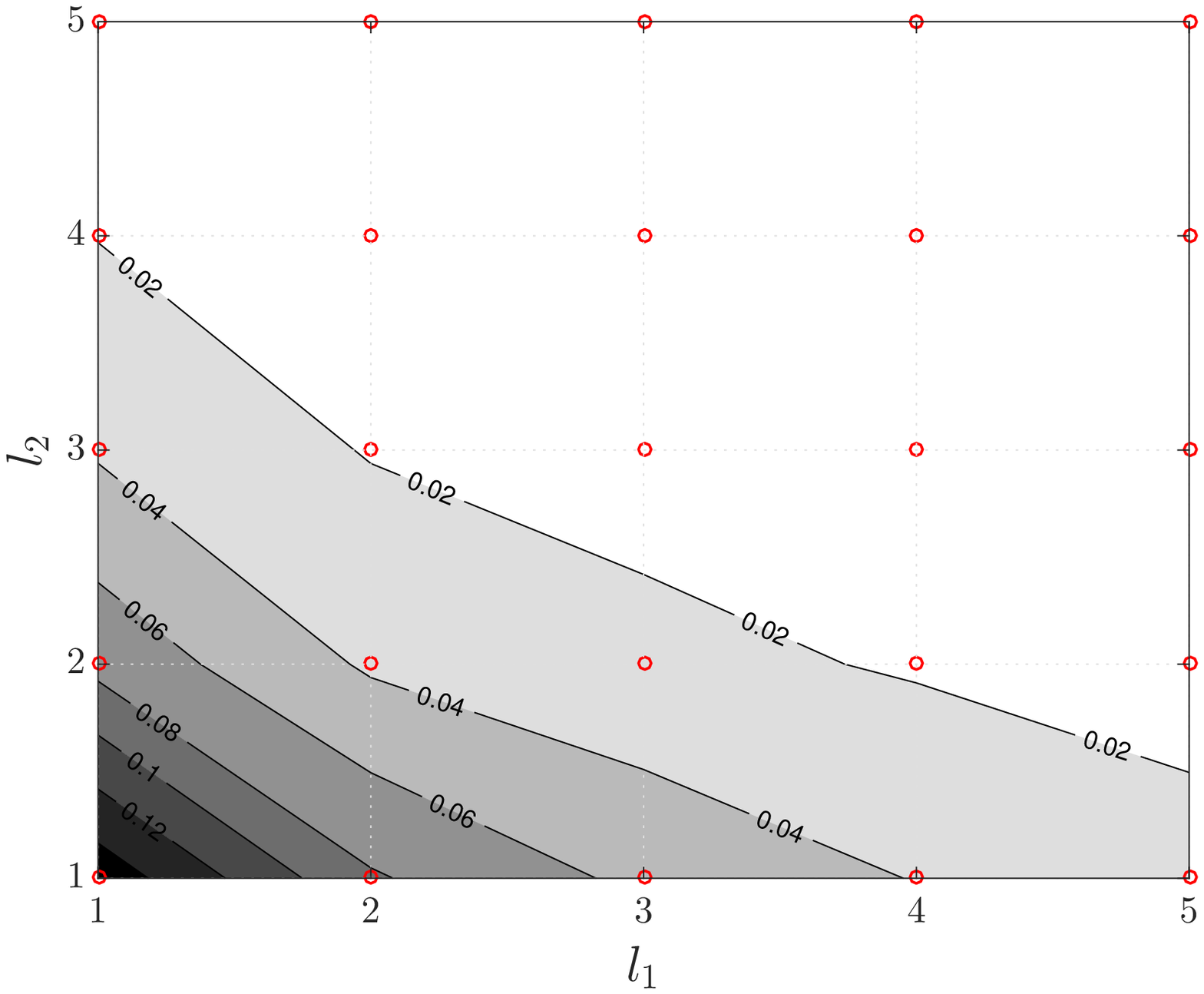}
\label{fig_1dprofit1}
}
\subfigure[Profit of scheme \eqref{eq_1dimplicitdiscrete_2} and \eqref{eq_1dcoarseloss}.]{
\includegraphics[width=2.5in, height=2in]{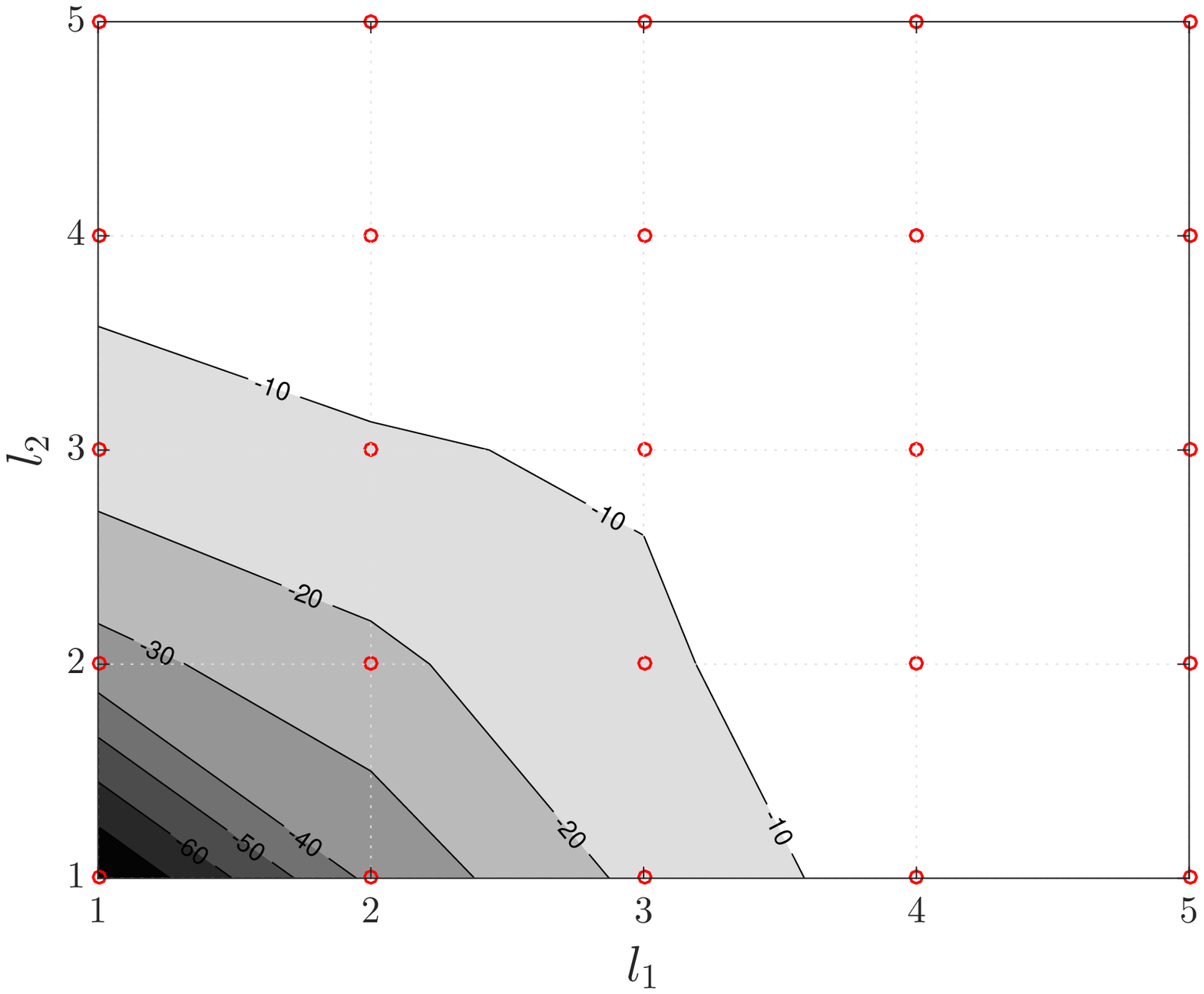}
\label{fig_1dprofit3}}
\caption{Contour plots of profit.}
\label{fig_1dprofit}
\end{figure}

\subsection{Complexity}
The analysis shows that the total computational cost of the scheme \eqref{eq_1dimplicitdiscrete} and \eqref{eq_1dLossApprox} for a RMSE $\varepsilon$ is $W = O\left(\varepsilon^{-2}\big|\log\varepsilon\big|\right)$. As $k_0$ does not matter much in this problem, we can choose $k_0=1/4$ fixed for $l_1^\ast\leq 25$, and the computational cost is expected to have the order $O\left(\varepsilon^{-2}\right)$. We test the total cost given different accuracy~$\varepsilon$, and compare the results with the multilevel algorithm. The mean square error for the estimator can be expressed as the sum of the variance of the estimator and the square of the weak error, as in \eqref{eq_1drms},
$$\big|\mathbb{E}\big[\widehat{L}_{\mathcal{I}}-L\big]\big| \leq \alpha\varepsilon, \qquad \mathrm{Var}\big[\widehat{L}_{\mathcal{I}}\big] \leq \big(1-\alpha^2\big)\varepsilon^2.$$
Since
$V_{(l_1,\,l_2)}W_{(l_1,\,l_2)}\sim 2^{-3l_1-2l_2}$,
the variance decays more rapidly with the level than the cost increases, so that the dominant cost is on level~$0$. For each fixed accuracy, we find the global minimum of the total cost with respect to $\alpha$ and choose that optimal~$\alpha^\ast$, thus reducing the total cost. Figure~\ref{fig_alpha} shows how the total cost varies with $\alpha$ when $\varepsilon=10^{-4}$. Figure~\ref{fig_1d_comparecost2} plots the CPU time of $\varepsilon^2W$ of two multi-index and the multilevel algorithms. As $k_0$ does not affect much in this problem, the total computational costs of the MLMC algorithm and MIMC with schemes \eqref{eq_1dimplicitdiscrete} and \eqref{eq_1dLossApprox} are approximately proportional to $\varepsilon^{-2}$, hence $\varepsilon^2W$ should not vary much with different accuracy $\varepsilon$. However, the cost of the alternative discretisation scheme \eqref{eq_1dimplicitdiscrete_2} with MIMC has the order $\varepsilon^{-2}(\log\varepsilon)^2$. We can see from the figure that $\varepsilon^2 W$ is no longer a constant but is slightly increasing as $\varepsilon\rightarrow 0$. 

\begin{figure}[H]
\centering
\subfigure[Cost versus $\alpha$ with scheme \eqref{eq_1dimplicitdiscrete} and \eqref{eq_1dLossApprox}.]{
\includegraphics[width=2.5in, height=2in]{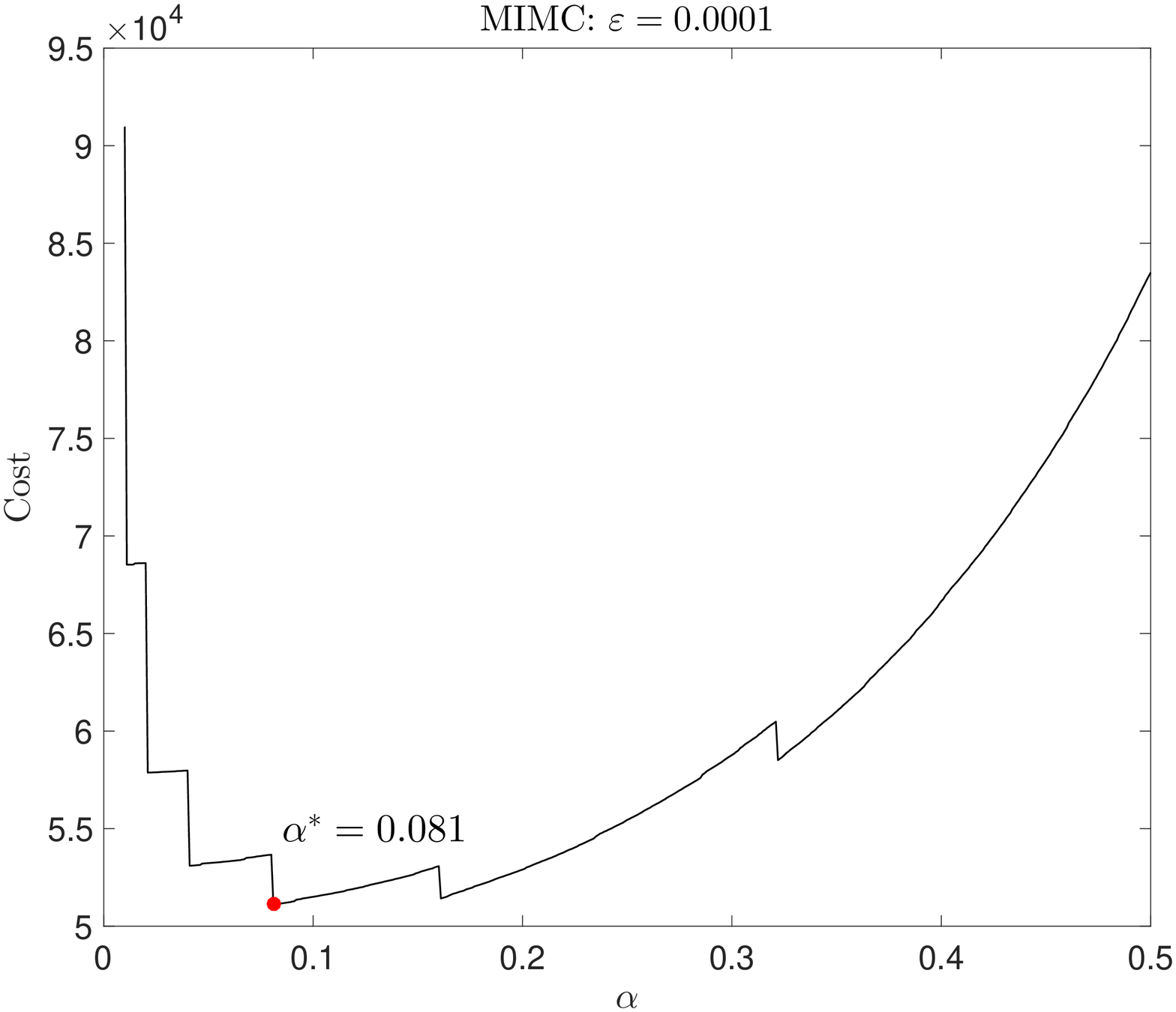}
\label{fig_alpha}
}
\subfigure[$\varepsilon^2$Cost of different schemes.]{
\includegraphics[width=2.5in, height=2in]{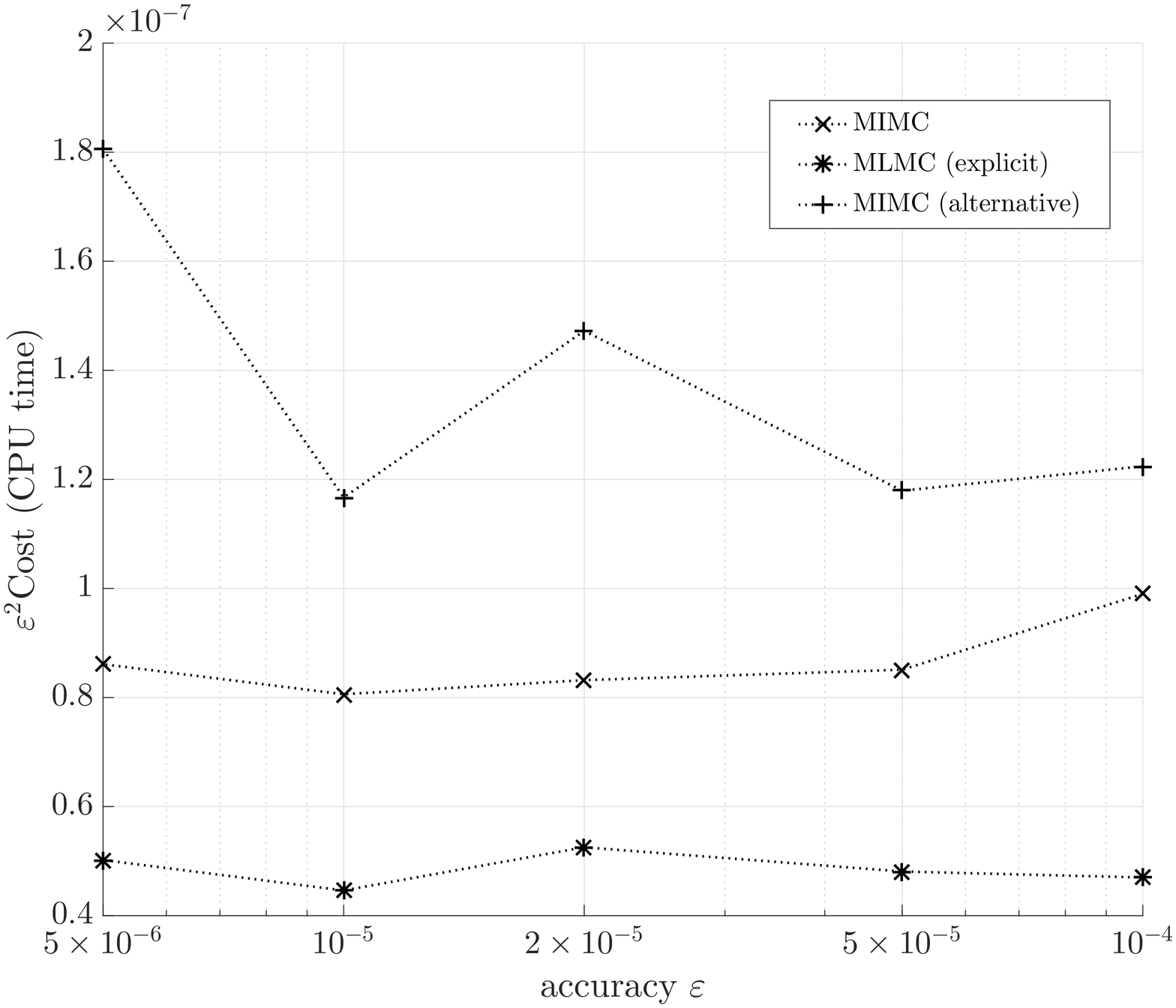}
\label{fig_1d_comparecost2}}
\caption{Total cost versus $\alpha$, and $\varepsilon^2$Cost with different schemes, where ``alternative'' denotes the scheme \eqref{eq_1dimplicitdiscrete_2} and \eqref{eq_1dLossApprox}.}
\end{figure}

The costs of the MLMC and MIMC schemes are very similar across a wide range of parameters (not reported here), which is to be expected as the dominant cost comes from the coarsest level. The MLMC scheme is slightly more efficient here, because it allows explicit time-stepping with a slightly lower computational cost. When an implicit scheme \eqref{eq_1dimplicitdiscrete} is used for MLMC as well, giving better stability properties, the MIMC scheme outperforms. This is useful for locally refined meshes.

\section{Conclusions and further work}\label{sec_Conclusion}

We have analysed the accuracy and complexity of a MIMC estimator for a one-dimensional parabolic SPDE using Fourier analysis. Specifically, we analysed a functional $L\left(v(\cdot,T)\right)$ of the solution. We showed that, by using the implicit Milstein finite difference discretisation~\eqref{eq_1dimplicitdiscrete}, the order of the first and second moments of $\Delta L_{(l_1,\,l_2)}$ are $O(h^2k)$ and $O(h^4k^2)$, with $h=h_0\cdot 2^{-l_1},\ k=k_0\cdot 2^{-2l_2}$. For a fixed RMSE $\varepsilon$, the theoretical complexity is $O(\varepsilon^{-2}|\log\varepsilon|)$. However, practically the order of complexity is still $O(\varepsilon^{-2})$. Moreover, under a different discretisation \eqref{eq_1dimplicitdiscrete_2}, the first and second moments of $\Delta L_{(l_1,\,l_2)}$ are $O(h^2k)$ and $O(h^4k)$, and practically the total complexity of this scheme is $O\big(\varepsilon^{-2}\left(\log\varepsilon\right)^2\big)$. We further used a simpler approximation of $L$ as \eqref{eq_1dcoarseloss} with discretisation \eqref{eq_1dimplicitdiscrete_2}. This gave two leading terms of different orders in the variance, which violated the simplest form of assumptions of the framework in \cite{ref5}. After some adaptation, theoretically we achieved a complexity of $O\big(\varepsilon^{-2}\left|\log\varepsilon\right|^3\big)$.

Although MLMC has already achieved optimal complexity in this case, one shortcoming of MLMC is that the efficiency still depends on the dimensionality. For high-dimensional problems, when the level increases, the decay of the variance will be slower than the increase of the cost. In that case, the total computational cost is no longer $O(\varepsilon^{-2})$. For example, consider a two-dimensional version of the SPDE,
\begin{align*}\label{eq_2dSPDE}
\mathrm{d}v &= -\mu_x\frac{\partial v}{\partial x}\,\mathrm{d}t -\mu_y\frac{\partial v}{\partial y}\,\mathrm{d}t + \frac{1}{2}\bigg(\frac{\partial^2 v}{\partial x^2} + 2\sqrt{\rho_x\rho_y}\rho_{xy}\frac{\partial^2v}{\partial x\partial y} + \frac{\partial^2 v}{\partial y^2} \bigg)\,\mathrm{d}t\\
&\qquad - \sqrt{\rho_x}\frac{\partial v}{\partial x}\,\mathrm{d}M_t^x - \sqrt{\rho_y}\frac{\partial v}{\partial y}\,\mathrm{d}M_t^y,
\end{align*}
with $\rho_x, \rho_y \in [0,1)$, $\rho_{xy} \in [-1,1]$, $\mu_x,\mu_y$ real parameters.
Assuming second order convergence in both spatial directions and first order in time, the total MLMC cost given a fixed accuracy $\varepsilon$ is expected to be $O\left(\varepsilon^{-2}(\log\varepsilon)^2\right)$. Although this is much lower than the $O(\varepsilon^{-4})$ expected for the standard Monte Carlo method, the order is not optimal as in the one-dimensional case. Further work will include analysing high-dimensional SPDEs using the MIMC method. Preliminary results suggest that MIMC achieves a better complexity than the multilevel method.

Another further research is to apply an absorbing boundary condition to this Zakai type SPDE \cite{bush2011stochastic}. Now the particle system involves a barrier such that once the particle touches the barrier, the value would be zero afterwards. Then the density process of the system satisfies a SPDE with a zero boundary condition. Specifically, for the one-dimensional SPDE \eqref{eq_1dSPDE}, it becomes
\begin{equation}\label{eq_1dSPDE_boundary}
\begin{aligned}
\mathrm{d}v &= -\mu\frac{\partial v}{\partial x}\,\mathrm{d}t + \frac{1}{2}\frac{\partial^2 v}{\partial x^2}\,\mathrm{d}t - \sqrt{\rho}\frac{\partial v}{\partial x}\,\mathrm{d}M_t,\\
v(0,x) &= \delta(x_0),\\
v(t,0) &= 0.
\end{aligned}
\end{equation}
The solution has a smooth density process and shows degeneracy near the absorbing boundary \cite{seanboundarySPDEs}.
In this case, the convergence order of the finite difference scheme does not follow from the Fourier analysis of the unbounded case. The analysis of this type of SPDE with boundary conditions is still an open area for research, especially for higher dimensions. One possible way is to use combination technique which combines the Galerkin solutions on certain full tensor product spaces \cite{griebel2014convergence}.

\section*{Acknowledgements}
The authors would like to thank Raul Tempone, Abdul-Lateef Haji-Ali, and Mike Giles for helpful discussions on Multi-index Monte Carlo methods in this context.

\begin{appendix}

\section{Proof of Proposition~\ref{prop_1dImplicitConvergence}}\label{appendix_proof_prop_meansquareconvergence}
As for the mean-square convergence, Proposition \ref{prop_1dImplicitConvergence}, we compare the numerical solution to the analytical solution by splitting the domain into two wave number regions. Assume $p$ is a constant satisfying $0<p<\frac{1}{4}$. Then, we define the low wave number region as in \eqref{eq_1d_Omega_low},
\[
\Omega_{\text{low}} = \big\{\gamma: \vert\gamma\vert\leq\min\{h^{-2p},\,k^{-p}\}\big\},
\]
and the high wave number region as in \eqref{eq_1d_Omega_high},
\[
\Omega_{\text{high}} = \big\{\gamma: \vert\gamma\vert>\min\{h^{-2p},\,k^{-p}\}\big\}\cap \left[-\frac{\pi}{h},\frac{\pi}{h}\right].
\]

For $\gamma$ in the low wave region, the analysis is the same as \cite{ref1}, Theorem 2.1, that the Fourier transform of the numerical solution is very close to the theoretical solution, with error $O(h^2) + O(k)$. However, when $h^2<k$, there is an extra term $O(\theta^Nh^{-1})$ in the high wave region. So here we only analyse the contribution from the high wave region.

\begin{proof}[Proposition 2.1]\label{proof_prop_1dmeansquarehighwave}
For the case $h^2<k$, we divide the high wave region into two parts, $\{k^{-p}<|\gamma|<k^{-1/2}\}$ and $\{k^{-1/2}<|\gamma|<\pi/h\}$, and we integrate separately. Note that $X(T,\gamma) = \exp(-\frac{1}{2}(1-\rho)\gamma^2T-\mathrm{i}\gamma\sqrt{\rho}\,W_T)$.
\begin{align*}
&\quad \mathbb{E}\Bigg[\,\bigg|\int_{\Omega_{\text{high}}} X(T,\gamma)-X_N(\gamma)\,\mathrm{d}\gamma\bigg|^2\,\Bigg]\\
&\leq 4 k^{-1/2}\mathbb{E}\Bigg[\int_{k^{-p}}^{k^{-1/2}} \Big|X(T,\gamma)-X_N(\gamma)\Big|^2\,\mathrm{d}\gamma \Bigg] + 4\pi h^{-1}\mathbb{E}\Bigg[\int_{k^{-1/2}}^{\pi/h} \Big|X(T,\gamma)-X_N(\gamma)\Big|^2\,\mathrm{d}\gamma \Bigg]\\ 
&\leq 8k^{-1/2}\int_{k^{-p}}^{k^{-1/2}}  \mathbb{E}\left[\left|X_N(\gamma)\right|^2 + \left|X(T,\gamma)\right|^2\right]\,\mathrm{d}\gamma + 8\pi h^{-1}\int_{k^{-1/2}}^{\pi/h} \mathbb{E}\left[\left|X_N(\gamma)\right|^2 + \left|X(T,\gamma)\right|^2\right]\,\mathrm{d}\gamma \\
&= 8k^{-1/2}\int_{k^{-p}}^{k^{-1/2}} \bigg(\frac{1+\rho\gamma^2ku\cos^2\frac{h\gamma}{2}+\frac{\rho^2}{2}\gamma^4k^2u^2\cos^4\frac{h\gamma}{2}}{1+\gamma^2ku+\frac{1}{4}\gamma^4k^2u^2}\bigg)^N \,\mathrm{d}\gamma \\
&\quad + 8\pi h^{-1}\int_{k^{-1/2}}^{\pi/h} \bigg(\frac{1+\rho\gamma^2ku\cos^2\frac{h\gamma}{2}+\frac{\rho^2}{2}\gamma^4k^2u^2\cos^4\frac{h\gamma}{2}}{1+\gamma^2ku+\frac{1}{4}\gamma^4k^2u^2}\bigg)^N\,\mathrm{d}\gamma + f_0(k),
\end{align*}
where $u = \frac{\sin^2\frac{h\gamma}{2}}{(\frac{h\gamma}{2})^2}$, and
\begin{equation}\label{eq_1dappendix_f0k}
\begin{aligned}
f_0(k)&\coloneqq 8k^{-1/2}\int_{k^{-p}}^{k^{-1/2}}\exp\big(-(1-\rho)\gamma^2T\big)\,\mathrm{d}\gamma + 8\pi h^{-1}\int_{k^{-1/2}}^{\pi/h}\exp\big(-(1-\rho)\gamma^2T\big)\,\mathrm{d}\gamma\\
&\leq \frac{4k^{p-1/2}}{(1-\rho)T}\exp\big(-(1-\rho)Tk^{-2p}\big) + \frac{4\pi k^{1/2}}{(1-\rho)T\,h}\exp\big(-(1-\rho)Tk^{-1}\big).
\end{aligned}
\end{equation}
Denote $\lambda \coloneqq k/h^2\in[1,\infty)$, and 
\[
f(\gamma) \coloneqq \frac{1+\rho\gamma^2ku\cos^2\frac{h\gamma}{2}+\frac{\rho^2}{2}\gamma^4k^2u^2\cos^4\frac{h\gamma}{2}}{1+\gamma^2ku+\frac{1}{4}\gamma^4k^2u^2} = \frac{1+\rho\cdot \lambda \sin^2(h\gamma) +\frac{1}{2}\rho^2\lambda^2\sin^4(h\gamma)}{1+4\lambda\sin^2\frac{h\gamma}{2} + 4\lambda^2\sin^4\frac{h\gamma}{2}}.
\]
we have
\begin{align*}
f(\gamma)&\leq \frac{1+\rho\gamma^2ku +\frac{\rho^2}{2}\gamma^4k^2u^2}{1+\gamma^2ku+\frac{1}{4}\gamma^4k^2u^2}= \frac{1+\rho\cdot 4\lambda \sin^2\frac{h\gamma}{2} +8\rho^2\lambda^2\sin^4\frac{h\gamma}{2}}{1+4\lambda\sin^2\frac{h\gamma}{2} + 4\lambda^2\sin^4\frac{h\gamma}{2}}\coloneqq g(\gamma).
\end{align*}

So
\begin{align*}
g(\gamma) &= \frac{1+\rho\cdot 4\lambda \sin^2\frac{h\gamma}{2} +8\rho^2\lambda^2\sin^4\frac{h\gamma}{2}}{1+4\lambda\sin^2\frac{h\gamma}{2} + 4\lambda^2\sin^4\frac{h\gamma}{2}} = \frac{1+\rho\cdot 4\lambda d + 8\rho^2\lambda^2 d^2}{1+4\lambda d + 4\lambda^2 d^2} \\[5pt]
g'(\gamma) &= -\frac{4\rho(1-2\rho)\lambda^2 d^2 + 2(1-2\rho^2)\lambda d + (1-\rho)}{(1+4\lambda d + 4\lambda^2 d^2)^2}\cdot 2\lambda h\sin(\gamma h),
\end{align*}
where $d = \sin^2\frac{h\gamma}{2}$. Therefore we have
\[
g'(\gamma^\ast) = 0 \quad \Longleftrightarrow\quad 4\rho(1-2\rho)\lambda^2 d^2 + 2(1-2\rho^2)\lambda d + (1-\rho)=0.
\]
The equation has solutions $\gamma_1^\ast,\,\gamma_2^\ast$ as follows:
\begin{align}
  \sin^2\frac{h\gamma_1^\ast}{2} &= -\frac{1}{2\lambda} <0 \quad \mbox{(not possible)}, \label{eq_gamma_ast_1}\\
  \sin^2\frac{h\gamma_2^\ast}{2} &= \frac{1-\rho}{2\rho(2\rho-1)\lambda}\quad \mbox{(need $\rho>1/2$ and $\gamma^\ast<\frac{\pi}{h}$)}, \label{eq_gamma_ast_2}\\
  \sin^2\frac{h\gamma^\ast}{2} &= -\frac{1-\rho}{2(1-2\rho^2)\lambda}<0\quad \mbox{(for $\rho=1/2$ and this is not possible.)} \label{eq_gamma_ast_3}
\end{align}


So for $\gamma\in\Omega_{\mathrm{high}}$ and $0<\rho \leq1/2$, $g(\gamma)$ decreases over the high wave region:
\[
g'(\gamma)<0.
\]
\begin{enumerate}
\item
For $k^{-p}<\gamma<k^{-1/2}$,
\begin{align*}
g(\gamma) < g(k^{-p})  &= \frac{1+\rho\cdot 4\lambda \sin^2\frac{h}{2k^{p}} +8\rho^2\lambda^2\sin^4\frac{h}{2k^{p}}}{1+4\lambda\sin^2\frac{h}{2k^{p}} + 4\lambda^2\sin^4\frac{h}{2k^{p}}}\\
&=\frac{1+\rho\cdot 4\lambda \frac{h^2}{4k^{2p}} + (8\rho^2\lambda^2 - \frac{4}{3}\rho\lambda)\frac{h^4}{16k^{4p}} + O(\frac{h}{k^{2p}})^6}{1+4\lambda \frac{h^2}{4k^{2p}} + (4\lambda^2 - \frac{4}{3}\lambda)\frac{h^4}{16k^{4p}}+O(\frac{h}{k^{2p}})^6}\\
&= 1-\frac{(1-\rho)k^{1-2p} + \frac{1}{4}(1-2\rho^2)k^{2-4p}+\frac{1}{12}(1-\rho)hk^{1-4p}}{1+k^{1-2p} + \frac{1}{4}k^{2-4p}-\frac{1}{12}\rho hk^{1-4p}} + o(k^{3(1-4p)}),\\[5pt]
g^N(\gamma) < g^N(k^{-p}) &= \left(1-\frac{(1-\rho)k^{1-2p} + \frac{1}{4}(1-2\rho^2)k^{2-4p}+\frac{1}{12}(1-\rho)hk^{1-4p}}{1+k^{1-2p} + \frac{1}{4}k^{2-4p}-\frac{1}{12}\rho hk^{1-4p}} + o(k^{3(1-4p)})\right)^N\\
&< \exp\left( -(1-\rho)T \Big(k^{-2p}+\frac{1}{4}(1-2\rho^2)k^{-4p} + \frac{1}{12}(1-\rho)hk^{-4p} \Big) + o(k^{-4p})\right).
\end{align*}
Therefore,
\[
8k^{-1/2}\int_{k^{-p}}^{k^{-1/2}} g^N(\gamma)\,\mathrm{d}\gamma < 8k^{-1} \exp\left( -(1-\rho)T \Big(k^{-2p}+\frac{1-2\rho^2}{4}k^{-4p} + \frac{1-\rho}{12}hk^{-4p} \Big) + o(k^{-4p}) \right).
\]
This is $o(k^r)$ for any $r>0$ as $k\rightarrow 0$, i.e.,
\[
\lim_{k\rightarrow 0}\frac{1}{k^r}\cdot 8k^{-1/2}\int_{k^{-p}}^{k^{-1/2}} g^N(\gamma)\,\mathrm{d}\gamma =0,\qquad \mbox{for all $r>0$.}
\]
\item
For $k^{-1/2}<\gamma<\pi/h$,
\begin{align*}
g(\gamma) < g(k^{-1/2}) &= \frac{1+\rho \tilde{u} + \frac{1}{2}\rho^2 \tilde{u}^2}{1+ \tilde{u} + \frac{1}{4} \tilde{u}^2}\coloneqq q(\rho,\tilde{u}),\qquad\mbox{where $\tilde{u} = \bigg(\sin\big(\frac{h}{2\sqrt{k}}\big)\Big/\frac{h}{2\sqrt{k}}\bigg)^2 \in [4/\pi^2,1]$. }
\end{align*}
The function $q(\rho,\tilde{u})$
is strictly decreasing in $\tilde{u}$ and increasing in $\rho$ for $4/\pi^2\leq \tilde{u}\leq 1,\ 0<\rho\leq 1/2$.
So we have,
\begin{align*}
g(k^{-1/2}) < q\Big(\frac{1}{2},\frac{4}{\pi^2}\Big) = \frac{1+\frac{2}{\pi^2}+\frac{2}{\pi^4}}{1+\frac{4}{\pi^2}+\frac{4}{\pi^4}}\coloneqq \theta_0 < 0.8457.
\end{align*}
Therefore,
\[
8\pi h^{-1}\int_{k^{-1/2}}^{\pi/h} g^N(\gamma)\,\mathrm{d}\gamma < 8\pi^2 h^{-2}\cdot \theta_0^N.
\]
\end{enumerate}
Consequently, for $0<\rho\leq 1/2$,
\begin{align*}
&\quad\mathbb{E}\Bigg[\,\bigg|\int_{\Omega_{\text{high}}} X(T,\gamma)-X_N(\gamma)\,\mathrm{d}\gamma\bigg|^2\,\Bigg]\\
&\leq 8k^{-1/2}\int_{k^{-p}}^{k^{-1/2}} f^N(\gamma) \,\mathrm{d}\gamma + 8\pi h^{-1}\int_{k^{-1/2}}^{\pi/h} f^N(\gamma)\,\mathrm{d}\gamma + f_0(k)\\
&\leq 8k^{-1/2}\int_{k^{-p}}^{k^{-1/2}} g^N(\gamma) \,\mathrm{d}\gamma + 8\pi h^{-1}\int_{k^{-1/2}}^{\pi/h} g^N(\gamma)\,\mathrm{d}\gamma +f_0(k)\\
&\leq C\cdot \theta_0^{N}\cdot h^{-2},
\end{align*}
where $C$ and $\theta_0$ are constants independent of $h$ and $k$.

For $\gamma\in\Omega_{\mathrm{high}}$ and $1/2<\rho <1/\sqrt{2}$, $g(\gamma)$ is decreasing before $\gamma^\ast$, and increasing afterwards,
where $\gamma^\ast$ solves the equation \eqref{eq_gamma_ast_2} such that
\[
{\gamma^\ast}^2k = \frac{2(1-\rho)}{\rho(2\rho-1)}\cdot\left(\frac{\sin \frac{h\gamma^\ast}{2}}{\frac{h\gamma^\ast}{2}}\right)^{-2}\geq \frac{2(1-\rho)}{\rho(2\rho-1)}.
\]
So 
\[
\gamma^\ast \geq C_0\cdot k^{-1/2},\qquad\mbox{where $C_0 = \frac{2(1-\rho)}{\rho(2\rho-1)}\geq 2$.}
\]
We analyse separately the two parts, $\{k^{-p}<|\gamma|<k^{-1/2}\}$ and $\{k^{-1/2}<|\gamma|<\pi/h\}$. 
\begin{enumerate}
\item
When $k^{-p}<|\gamma|<k^{-1/2}$, $g(\gamma)$ is strictly decreasing. Similar to the previous analysis, we have 
\[
8k^{-1/2}\int_{k^{-p}}^{k^{-1/2}} g^N(\gamma)\,\mathrm{d}\gamma < 8k^{-1}\exp\left( -(1-\rho)T \Big(k^{-2p}+\frac{1}{4}(1-2\rho^2)k^{-4p} + \frac{1}{12}(1-\rho)hk^{-4p} \Big)\right).
\]
This is $o(k^r)$ for any $r>0$ as $k\rightarrow 0$.
\item
For $k^{-1/2}<|\gamma|<\pi/h$, $g(\gamma)$ decreases first, then increases if $\gamma^\ast<\pi/h$. Therefore,
\[
g(\gamma)<\max\{\,g(k^{-1/2}),\,g(\pi/h)\}.
\]
We will show both terms are strictly less than 1.

Similar to the previous analysis,
\begin{align*}
g(k^{-1/2}) = \frac{1+\rho\tilde{u} + \frac{1}{2}\rho^2\tilde{u}^2}{1+ \tilde{u} + \frac{1}{4} \tilde{u}^2} = q(\rho,\tilde{u}) <q\bigg(\frac{1}{\sqrt{2}},\frac{4}{\pi^2}\bigg)\coloneqq \theta_1 < 0.918.
\end{align*}
On the other hand,
\[
g(\pi/h) = \frac{1+4\rho\lambda+8\rho^2\lambda^2}{1+4\lambda+4\lambda^2},
\]
and this is strictly decreasing in $\lambda$ and increasing in $\rho$, so 
\[
g(\pi/h) \leq \frac{1+4\rho\lambda+8\rho^2\lambda^2}{1+4\lambda+4\lambda^2}\bigg|_{\lambda=1,\,\rho=1/\sqrt{2}} < 0.8699 \coloneqq \theta_2.
\]
Therefore, if we let $\theta_0 = \max\{\theta_1,\,\theta_2\} = \theta_1$, we have
\[
8\pi h^{-1}\int_{k^{-1/2}}^{\pi/h}  g^N(\gamma)\,\mathrm{d}\gamma < 8\pi^2 h^{-2}\cdot \theta_0^N.
\]
\end{enumerate}
Consequently, for $0<\rho\leq 1/2$,
\begin{align*}
&\quad\mathbb{E}\Bigg[\,\bigg|\int_{\Omega_{\text{high}}} X(T,\gamma)-X_N(\gamma)\,\mathrm{d}\gamma\bigg|^2\,\Bigg]\\
&\leq 8k^{-1/2}\int_{k^{-p}}^{k^{-1/2}} f^N(\gamma) \,\mathrm{d}\gamma + 8\pi h^{-1}\int_{k^{-1/2}}^{\pi/h} f^N(\gamma)\,\mathrm{d}\gamma + f_0(k)\\
&\leq 8k^{-1/2}\int_{k^{-p}}^{k^{-1/2}} g^N(\gamma) \,\mathrm{d}\gamma + 8\pi h^{-1}\int_{k^{-1/2}}^{\pi/h} g^N(\gamma)\,\mathrm{d}\gamma + f_0(k)\\
&\leq C\cdot \theta_0^{N}\cdot h^{-2},
\end{align*}
where $C$ and $\theta_0$ are constants independent of $h$ and $k$.


Numerical results confirm this as well. Figure~\ref{fig_fix_k} shows the integral over the high wave region with $T=1$, $k=1/4$ fixed, and $h=2^{-l}$, $l=7,8,\ldots,20$. 
\begin{figure}[H]
\centering
\includegraphics[width=2.5in, height=2in]{}
\caption{Integral over the high wave region with $k$ fixed.}
\label{fig_fix_k}
\end{figure}

\end{proof}

\end{appendix}


\begin{thebibliography}{10}

\bibitem{buckwar2011comparative}
E.~Buckwar and T.~Sickenberger.
\emph{A comparative linear mean-square stability analysis of Maruyama-and Milstein-type methods}.
\newblock {Mathematics and Computers in Simulation}, 81(6):1110--1127,
  2011.

\bibitem{bujok2012numerical}
K.~Bujok and C. Reisinger.
\emph {Numerical valuation of basket credit derivatives in structural jump-diffusion models}.
Journal of Computational Finance 15(4):115--158, 2012.

\bibitem{bush2011stochastic}
N.~Bush, B.~M. Hambly, H.~Haworth, L.~Jin, and C.~Reisinger.
\emph {Stochastic evolution equations in portfolio credit modelling}.
\newblock {SIAM Journal on Financial Mathematics}, 2(1):627--664, 2011.

\bibitem{carter2007sharperror}
R. Carter, and M. B. Giles. 
\emph{Sharp error estimates for discretizations of the 1D convection–diffusion equation with Dirac initial data.} 
\newblock {IMA Journal of Numerical Analysis} 27(2):406--425, 2007.

\bibitem{davie2001convergence}
A.~M. Davie and J.~Gaines.
\emph {Convergence of numerical schemes for the solution of parabolic
  stochastic partial differential equations}.
\newblock {Mathematics of Computation}, 70(233):121--134, 2001.

\bibitem{giles2008multilevel}
M.~B. Giles.
\emph {Multilevel Monte Carlo path simulation}.
\newblock {Operations Research}, 56(3):607--617, 2008.

\bibitem{giles2015multilevel}
M.~B. Giles.
\emph {Multilevel Monte Carlo methods}.
\newblock {Acta Numerica}, 24:259--328, 2015.

\bibitem{ref2}
M.~B. Giles and C.~Reisinger.
\emph {Stochastic finite differences
  and multilevel Monte Carlo for a class of SPDEs in finance}.
\newblock {SIAM Journal on Financial Mathematics}, 3(1):572--592, 2012.

\bibitem{gobet2006discretization}
E.~Gobet, G.~Pages, H.~Pham, and J.~Printems.
\emph {Discretization and simulation of the Zakai equation.}
\newblock {SIAM Journal on Numerical Analysis}, 44(6):2505--2538, 2006.

\bibitem{griebel2014convergence}
M. Griebel and H. Harbrecht.
\emph{On the convergence of the combination technique}.
\newblock {Sparse Grids and Applications, volume 97 of Lecture Notes in Computational Science and Engineering, Springer}, 55--74, 2014.

\bibitem{gyongy1998lattice}
I.~Gy{\"o}ngy.
\emph {Lattice approximations for stochastic quasi-linear parabolic partial
  differential equations driven by space-time white noise I}.
\newblock {Potential Analysis}, 1(9):1--25, 1998.

\bibitem{gyongy1999lattice}
I.~Gy{\"o}ngy.
\emph {Lattice approximations for stochastic quasi-linear parabolic partial
  differential equations driven by space-time white noise II}.
\newblock {Potential Analysis}, 11(1):1--37, 1999.

\bibitem{gyongy1997implicit}
I.~Gy{\"o}ngy and D.~Nualart.
\emph {Implicit scheme for stochastic parabolic partial differential
  equations driven by space-time white noise}.
\newblock {Potential Analysis}, 7(4):725--757, 1997.

\bibitem{ref5}
A.~L. Haji-Ali, F.~Nobile, and R.~Tempone.
\emph {Multi-index Monte Carlo: when
  sparsity meets sampling}.
\newblock {Numerische Mathematik}, 132(4):767--806, 2015.

\bibitem{higham2000mean}
D.~J. Higham.
\emph {Mean-square and asymptotic stability of the stochastic theta
  method}.
\newblock {SIAM Journal on Numerical Analysis}, 38(3):753--769, 2000.

\bibitem{jentzen2009numerical}
A.~Jentzen and P.~Kloeden.
\emph {The numerical approximation of stochastic partial differential
  equations}.
\newblock {Milan Journal of Mathematics}, 77(1):205--244, 2009.

\bibitem{jentzen2010taylor}
A.~Jentzen and P.~Kloeden.
\emph {Taylor expansions of solutions of stochastic partial differential
  equations with additive noise}.
\newblock {The Annals of Probability}, 38(2):532--569, 2010.

\bibitem{kebaier2005statistical}
A.~Kebaier.
\emph {Statistical Romberg extrapolation: a new variance reduction method
  and applications to option pricing}.
\newblock {The Annals of Applied Probability}, 15(4):2681--2705, 2005.

\bibitem{kruse2014optimal}
R.~Kruse.
\emph {Optimal error estimates of Galerkin finite element methods for
  stochastic partial differential equations with multiplicative noise}.
\newblock {IMA Journal of Numerical Analysis}, 34(1):217--251, 2014.

\bibitem{krylov1981stochastic}
N.~V. Krylov and B.~L. Rozovskii.
\emph {Stochastic evolution equations}.
\newblock {Journal of Soviet Mathematics}, 16(4):1233--1277, 1981.

\bibitem{kurtz1999particle}
T.~G. Kurtz and J.~Xiong.
\emph {Particle representations for a class of nonlinear SPDEs}.
\newblock {Stochastic Processes and their Applications}, 83(1):103--126,
  1999.

\bibitem{seanboundarySPDEs}
S.~Ledger. 
\emph{Sharp regularity near an absorbing boundary for solutions to second order SPDEs in a half-line with constant coefficients.} 
\newblock {Stochastic Partial Differential Equations: Analysis and Computations} 2(1):1--26, 2014.

\bibitem{pflaum1999error}
C. Pflaum and A. Zhou.
\emph {Error analysis of the combination technique}.
\newblock {Numerische Mathematik}, 84(2):327--350,
  1999.

\bibitem{reisinger2012analysis}
C.~Reisinger.
\emph {Analysis of linear difference schemes in the sparse grid combination technique}.
\newblock {IMA Journal of Numerical Analysis}, 33(2):544-581, 2013.

\bibitem{ref1}
C.~Reisinger.
\emph {Mean-square stability and error
  analysis of implicit time-stepping schemes for linear parabolic SPDEs with
  multiplicative Wiener noise in the first derivative}.
\newblock {International Journal of Computer Mathematics},
  89(18):2562--2575, 2012.

\bibitem{strang1986introduction}
G.~Strang and K.~Aarikka.
\emph {{Introduction to Applied Mathematics}}, volume~16.
\newblock Wellesley-Cambridge Press Wellesley, MA, 1986.

\bibitem{szpruch2010numerical}
L.~Szpruch.
\emph {{Numerical approximations of nonlinear stochastic systems}}.
\newblock PhD thesis, University of Strathclyde, 2010.

\bibitem{walsh2005finite}
J.~B. Walsh.
\emph{Finite element methods for parabolic stochastic {PDE's}}.
\newblock {Potential Analysis}, 23(1):1--43, 2005.


\bibitem{zenger1997sparse}
C.~Zenger.
\emph{Sparse grids}.
\newblock {Parallel Algorithms for Partial Differential Equations
} (W. Hackbusch, ed.), Vol. 31 of Notes on Numerical Fluid Mechanics,
Vieweg, Braunschweig/Wiesbaden, 1991.

\end{thebibliography}
\end{document}